\newcommand{\disk}{\ensuremath{\mathbb{D}} } % unit disk
\newcommand{\sphere}{\overline{\Bbb{C}}} %Riemann sphere
\newcommand{\riem}{\Sigma}  %Riemann surface
\renewcommand{\Bbb}[1]{\ensuremath{\mathbb{#1}}}
\theoremstyle{plain}
        \newtheorem{theorem}{Theorem}[section]
        \newtheorem{lemma}[theorem]{Lemma}
        \newtheorem{corollary}[theorem]{Corollary}
\theoremstyle{definition}
        \newtheorem{definition}[theorem]{Definition}
\theoremstyle{remark}
    \newtheorem{remark}[theorem]{Remark}
\numberwithin{equation}{section} % Equation labels are 'section'.'eq #'
\numberwithin{figure}{section} % Figures labela are 'section.'fig #'
\title{Schiffer comparison operators and approximations on Riemann surfaces bordered by quasicircles}
\author{Eric Schippers}
\author{Mohammad Shirazi}
\author{Wolfgang Staubach}
\begin{document}
	\maketitle
	
\begin{abstract}  
 We consider a compact Riemann surface $R$ of arbitrary genus, with a finite number of non-overlapping quasicircles, which separate $R$ into two subsets: a connected Riemann surface $\riem$, and the union $\mathcal{O}$ of a finite collection of simply-connected regions.  We prove that the Schiffer integral operator mapping the Bergman space of anti-holomorphic one-forms on $\mathcal{O}$ to the Bergman space of holomorphic forms on $\riem$ is an isomorphism.  We then apply this to prove versions of the Plemelj-Sokhotski isomorphism and jump decomposition for such a configuration.  Finally we obtain some approximation theorems for the Bergman space of one-forms and Dirichlet space of holomorphic functions on $\riem$ by elements of Bergman space and Dirichlet space on fixed regions in $R$ containing $\riem$.
\end{abstract}

\begin{section}{Statement of the main theorems}

\begin{subsection}{Preliminaries}
 Let $\riem$ be a Riemann surface.  
 Define the pairing of one-forms on $\riem$
\begin{equation} \label{eq:form_inner_product}
 (\omega_1,\omega_2) = \frac{1}{2} \iint_\riem \omega_1 \wedge \ast \overline{\omega_2}. 
\end{equation}  
 Let $A(\riem)$ denote the set of holomorphic one-forms on $\riem$ for which this pairing is finite.  Similarly, let $\overline{A(\riem)}$ denote the set of anti-holomorphic one-forms for which (\ref{eq:form_inner_product}) is finite.  Denote the set of harmonic one-forms such that this pairing is finite by $A_{\text{harm}}(\riem)$.
 We have the orthogonal decomposition 
 \[  A_{\text{harm}}(\riem) = A(\riem) \oplus \overline{A(\riem)}.  \]
 The subscript $e$ will denote the subset of exact forms; e.g. $A(\riem)_e$, $A_{\text{harm}}(\riem)_e$ etc.  
 
 We also define the Dirichlet spaces.  Let 
 \begin{align*}
   \mathcal{D}_{\text{harm}}(\riem) & = \{ h: \riem \rightarrow \mathbb{C} \ \text{harmonic} \,:\, dh \in A_{\text{harm}}(\riem)  \} \\
   \mathcal{D}(\riem) & = \{ h: \riem \rightarrow \mathbb{C} \ \text{holomorphic} \,:\, \partial h \in A(\riem)  \} \\
   \overline{\mathcal{D}(\riem)} & = \{ h: \riem \rightarrow \mathbb{C} \ \text{anti-holomorphic} \,:\, \overline{\partial} h \in \overline{A(\riem)}  \}. 
 \end{align*}
 For a point $q \in \riem$, the subscript $q$ will denote the subset of functions vanishing at $q$; e.g. $\mathcal{D}_{\text{harm}}(\riem)_q$, etc.   
 
 Denote complex conjugation of functions $h$ and forms $\alpha$ by $\overline{h}$ and $\overline{\alpha}$.  
 Of course, $\overline{\mathcal{D}(\riem)}$ consists of the set of complex conjugates of elements of $\mathcal{D}(\riem)$, justifying the notation.  
 The notation $\overline{A(\riem)}$ is similarly justified.  
  
 By a {{conformal map}}, we mean a holomorphic map which is a biholomorphism onto its image.  To define a {{quasiconformal map}} we  first need the notion of {\it{Beltrami differential}} on $\Sigma$, which
 is a $(-1,1)$-differential $\omega$ on $\Sigma$, i.e., a differential given
 in a local biholomorphic coordinate $z$ by $\mu(z)d\bar{z}/dz$, such
 that $\mu$ is Lebesgue-measurable in every choice of coordinate and
 $||\mu||_\infty <1$.  Quasiconformal maps are by definition solutions to the {{Beltrami equation}}, i.e.  the differential equation given in local
 coordinates by $\overline{\partial} f = \omega \partial f$ where $\omega$ is a Beltrami differential on $\Sigma$.
 Let $\mathbb{C}$ denote the complex plane and $\sphere$ denote the Riemann sphere. By a quasicircle in the plane, we mean the image of the unit circle
 $\mathbb{S}^1 = \{ z \in \mathbb{C} \,:\, |z|=1 \}$ under a quasiconformal mapping of the plane.  By a quasicircle $\Gamma$ in $\riem$, we mean a simple closed curve such that there is a conformal map $\phi:U \rightarrow \mathbb{C}$ such that $U$ is an open neighbourhood of $\Gamma$ and $\phi(\Gamma)$ is a quasicircle in $\mathbb{C}$ in the sense above.
 
  Let $R$ be a compact surface.  Fix points $z$, $q$, and $w_0 \in R$.  Following for example H. Royden \cite{Royden}, we define Green's function of $R$ to be the unique function 
 $g(w,w_0;z,q)$ such that 
 \begin{enumerate}
  \item $g$ is harmonic in $w$ on $R \backslash \{z,q\}$;
  \item for a local coordinate $\phi$ on an open set $U$ containing $z$, $g(w,w_0;z,q) + \log| \phi(w) -\phi(z) |$ is harmonic 
   for $w \in U$;
  \item for a local coordinate $\phi$ on an open set $U$ containing $q$, $g(w,w_0;z,q) - \log| \phi(w) -\phi(q) |$ is harmonic 
   for $w \in U$;
  \item $g(w_0,w_0;z,q)=0$ for all $z,q,w_0$.  
 \end{enumerate}
 It can be shown that $g$ exists, and is uniquely determined by these properties.  The normalization at $w_0$ is inconsequential, because it can be shown that $\partial_w g(w,w_0;z,q)$ and $\overline{\partial}_w g(w,w_0;z,q)$ are independent of $w_0$.  Thus we leave $w_0$ out of the notation for $g$.  Also, $g$ is harmonic in both $w$ and $z$.  
 
 Let $R$ be compact as above, and $\riem \subset R$ be an open, proper and connected subset which we treat as a Riemann surface.  For such surfaces we have a different notion of Green's function.  We say that $\riem$ has a Green's function if there is a harmonic function $g(z,w)$ on $\riem$ such that 
 \begin{enumerate}
  \item for a local coordinate $\phi$ on an open set $U \subset \riem$ containing  $w$, $g(z;w) + \log| \phi(z) -\phi(w) |$ is harmonic in $z$ on $U$;
  \item $\lim_{z \rightarrow p} g(z,w) = 0$ for all $p \in \partial \riem$ and $w \in \riem$.       
 \end{enumerate}
 It can be shown that $g$ is also harmonic in $w$.  
 
 It is always understood that we use the first type of Green's function for compact surfaces, and the second type for open proper connected subsets.  When necessary, we distinguish between Green's functions of different surfaces with a subscript, e.g. $g_R$ or $g_\riem$.  
\end{subsection}
\begin{subsection}{Statement of results} 
  Let $R$ be a compact surface and $\mathcal{O}$ be an open subset.  In this paper, we will always assume that $\mathcal{O} = \Omega_1 \cup \cdots \cup \Omega_n$ where $\Omega_k$ are simply-connected domains for $k=1,\ldots,n$, each bounded by a quasicircle, whose closures are pairwise disjoint.  Let $\riem$ be the complement of the closure of $\mathcal{O}$ in $R$.  

   We define the integral operators  
  \begin{align}  \label{eq:T_general_definition}
   T(\mathcal{O},\riem) : \overline{A(\mathcal{O})} & \rightarrow A(\riem) \nonumber \\
   \overline{\alpha} & \mapsto \frac{1}{\pi i} \iint_{\mathcal{O}} \partial_z \partial_w g_{R}(w;z,q) \wedge_w \overline{\alpha(w)} 
  \end{align}
  where $z \in \riem$.
  We refer to the kernel of this integral operator $L_R(z,w) = \partial_z \partial_w g_{{R}}(w;z,q)/(\pi i)$ as the Schiffer kernel.  Note that the above case includes the possibility that the domain $\mathcal{O}$ is connected; we will frequently use this case, for example when restricting to one of the connected components $\Omega_k$ of $\mathcal{O}$.  In that case we will denote the complement of the closure of $\Omega_k$ by $\Omega_k^*$.  The fact that $T(\mathcal{O},\riem)$ is bounded and has codomain $A(\riem)$ will be justified ahead.
  
  Let 
  \[  V = \left\{ \overline{\alpha} \in \overline{A(\mathcal{O})}: \iint_\mathcal{O} \beta \wedge \overline{\alpha} =0 \ \ \forall \beta \in A(R) \right\}.    \]
  that is, $V$ is the orthogonal complement of $\left. \overline{A(R)} \right|_{\mathcal{O}}$ in $\overline{A(\mathcal{O})}$.
  Our first result is the following:  
  \begin{theorem}  \label{th:general_T_is_isomorphism}  Let $R$ be a compact   Riemann surface, and let $\mathcal{O} = \Omega_1 \cup \cdots \cup \Omega_n$ where $\Omega_k$ are simply-connected domains in $R$ bounded by quasicircles $\Gamma_k$ for $k=1,\ldots,n$.  Assume that the closures of $\Omega_k$ are pairwise disjoint. Let $\riem$ be the complement of the closure of $\mathcal{O}$ in $R$.  Then the restriction of $T(\mathcal{O},\riem)$ to $V$ is a bounded isomorphism onto $A(\riem)_e$.  
  \end{theorem}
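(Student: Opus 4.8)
The strategy is to reduce to the model case of a disk (or a single quasicircle) glued into $R$, and to build the inverse of $T(\mathcal{O},\riem)$ explicitly out of a companion Schiffer operator going the other way. First I would recall the complementary operator $T(\riem,\mathcal{O}): \overline{A(\riem)} \to A(\mathcal{O})$ and the operators built from the other Schiffer kernel that produce anti-holomorphic output; the key algebraic identity is the pair of ``Schiffer relations'' expressing that the orthogonal projection in $A_{\text{harm}}$ of a form extended by zero, or the Cauchy-type jump of a form across $\Gamma = \cup \Gamma_k$, is computed by these operators. Concretely, for $\overline{\alpha} \in \overline{A(\mathcal{O})}$ one wants the identity that $T(\mathcal{O},\riem)\overline{\alpha}$, together with the analogous anti-holomorphic operator applied to $\overline{\alpha}$, reconstructs $\overline{\alpha}$ up to the part of $\overline{\alpha}$ that extends (anti-)holomorphically to all of $R$ — which is exactly the subspace $\left.\overline{A(R)}\right|_{\mathcal{O}}$ that we have quotiented out by passing to $V$. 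This is why $V$, not all of $\overline{A(\mathcal{O})}$, is the correct domain, and why the image is the \emph{exact} forms $A(\riem)_e$: a form in $A(\riem)$ is exact precisely when its periods vanish, and the periods of $T(\mathcal{O},\riem)\overline{\alpha}$ around cycles in $\riem$ can be computed by Stokes/residue arguments against the harmonic measures, giving zero because $\partial_w g_R$ has no periods of the obstructing type.

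The concrete steps I would carry out are: (i) establish boundedness of $T(\mathcal{O},\riem)$ and that its range lies in $A(\riem)$ — this follows from the fact that $\Gamma_k$ are quasicircles, via a conformal welding/quasiconformal-reflection argument reducing to the classical planar Bergman-kernel estimates, together with the observation that $L_R(z,w)$ differs from the planar Bergman kernel by a smooth (hence bounded) kernel coming from the regular part of Green's function; (ii) show the range is contained in $A(\riem)_e$ by a period computation as above; (iii) prove injectivity on $V$: if $T(\mathcal{O},\riem)\overline{\alpha} = 0$ then a reproducing/adjoint identity forces $\overline{\alpha} \perp V$, hence $\overline{\alpha}=0$; (iv) prove surjectivity onto $A(\riem)_e$ by exhibiting, for each $\beta \in A(\riem)_e$, a preimage — writing $\beta = dh$ for a holomorphic function $h$ on $\riem$ (using exactness), extending $h$ across the quasicircles by the bounded overfare/Dirichlet-space extension (which is where the quasicircle hypothesis is essential), and letting $\overline{\alpha}$ be the ``jump'' $\overline{\partial}$ of a suitable harmonic extension, then projecting into $V$; (v) combine injectivity, surjectivity, and boundedness with the open mapping theorem to conclude it is a bounded isomorphism.

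I expect the main obstacle to be step~(iv), the surjectivity: one must produce a genuine element of the Bergman space $\overline{A(\mathcal{O})}$ — i.e., an $L^2$ anti-holomorphic one-form on the disks — whose Schiffer transform is a prescribed exact form on $\riem$, and the natural candidate (differentiate a harmonic extension of the boundary values of $h$) only lands in $L^2$ because the gluing curves are quasicircles, so the Dirichlet norms on the two sides are comparable. Making this rigorous requires the bounded extension operator from $\mathcal{D}(\riem)$ to $\mathcal{D}(\text{neighbourhood})$ across a quasicircle, plus a careful check that the resulting $\overline{\alpha}$, after subtracting its component in $\left.\overline{A(R)}\right|_{\mathcal{O}}$ to land in $V$, still maps to $\beta$ — which uses that forms extending holomorphically to $R$ are killed by $T(\mathcal{O},\riem)$ by the Cauchy/Stokes argument already set up. A secondary technical point is verifying that the period obstruction in step~(ii) exactly cuts out $A(\riem)_e$ and not a smaller or larger subspace; this should follow from the duality between periods and the harmonic one-forms $\left.A(R)\right|_\riem$ that were removed on the $\mathcal{O}$ side, matching dimensions on both ends.
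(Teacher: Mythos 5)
Your outline of surjectivity is essentially the paper's route: write $\beta = \partial h$ for $h \in \mathcal{D}(\riem)$, transmit $h$ across each quasicircle to get $h_k = \mathfrak{O}(\riem,\Omega_k)h$, and take $\overline{\alpha} = (\overline{\partial}h_1,\ldots,\overline{\partial}h_n)$; the quasicircle hypothesis enters exactly where you say it does, through boundedness of the transmission. But two of your steps have genuine gaps. First, your injectivity argument (step (iii)) has no mechanism behind it: "a reproducing/adjoint identity forces $\overline{\alpha} \perp V$" is not something that follows formally, and in the planar single-curve case injectivity is equivalent to a strict Grunsky-type inequality, i.e.\ it is exactly as deep as the quasicircle hypothesis. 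The paper instead produces an explicit left inverse $\overline{P}_A(\mathcal{O})\,\mathfrak{O}_e(\riem,\mathcal{O})$: it proves the "transmitted jump" identity $\mathfrak{O}_e(\riem,\mathcal{O})\,T(\mathcal{O},\riem)\overline{\alpha} = \overline{\alpha} + T(\mathcal{O},\mathcal{O})\overline{\alpha}$ for $\overline{\alpha}\in V$, and since the second term on the right is holomorphic, projecting onto the anti-holomorphic part recovers $\overline{\alpha}$. That identity is the real work: it is established first on a dense subspace of data extending holomorphically past the boundary (Theorem \ref{th:average_Xr_dense}), using Royden's decomposition theorem and the bounce operators $\mathfrak{G}$. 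Nothing in your plan substitutes for this.

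Second, two smaller but real problems. In step (iv) you propose to "project into $V$" and justify it by claiming that $T(\mathcal{O},\riem)$ kills restrictions of $\overline{A(R)}$; that is not established and is not true in general (the Schiffer kernel integrated over a proper subdomain $\mathcal{O}$ does not annihilate $\left.\overline{A(R)}\right|_{\mathcal{O}}$). The paper avoids this entirely by verifying that the candidate $(\overline{\partial}h_1,\ldots,\overline{\partial}h_n)$ already lies in $V$, via the reproducing property of the Bergman kernel of $R$ together with $\overline{\partial}h_\riem = 0$. Relatedly, in step (ii) your reason for exactness of the image ("$\partial_w g_R$ has no periods of the obstructing type") is not the right mechanism: $T(\mathcal{O},\riem)\overline{\alpha}$ is \emph{not} exact for general $\overline{\alpha}\in\overline{A(\mathcal{O})}$. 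Exactness holds precisely because for $\overline{\alpha}=\overline{\partial}\,\overline{H}$ with $\overline{H}\in W'$ the Cauchy-type potential $J_q(\Gamma)\overline{H}$ is a single-valued \emph{holomorphic} function on $\riem$ (Lemma \ref{CDJO}, which uses the $V$ condition against the Bergman kernel of $R$), so that $T(\mathcal{O},\riem)\overline{\alpha} = \partial J_q(\Gamma)\overline{H} = d J_q(\Gamma)\overline{H}$.
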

  
   This generalizes one direction of a result of V.V. Napalkov and R.S. Yulmukhametov \cite{Nap_Yulm}, which says that in the case that $n=1$ and $R = \sphere$, the Schiffer operator is an isomorphism if and only if the domain $\Omega$ is bounded by a quasicircle.  This is closely related to a result proven by Schippers and Staubach \cite{Schippers_Staubach_scattering} which shows that the jump decomposition in the plane results in a bounded isomorphism if and only if the curve is a quasicircle, and also a result of Y. Shen \cite{ShenFaber} which shows that the boundary of $\Omega$ is a quasicircle if and only if a certain sequential Faber operator is a bounded isomorphism.    These results motivate the particular interest in Schiffer operators for regions bounded by quasicircles.  Schippers and Staubach proved this theorem \cite{Schiffer_comparison} in the case of a single quasicircle dividing a compact Riemann surface into two disjoint connected components.\\
  
We also give three applications of Theorem \ref{th:general_T_is_isomorphism}. 
 First, we prove a version of the Plemelj-Sokhotski jump formula for $\Gamma = \Gamma_1 \cup \cdots \cup \Gamma_n$ where $\Gamma_k$ are as in Theorem \ref{th:general_T_is_isomorphism}.  By $\mathcal{H}(\Gamma)$, we mean the set 
 of complex functions on $\Gamma$ whose restriction to $\Gamma_k$ is the boundary values of an element of $\mathcal{D}_{\text{harm}}(\Omega_k)$ in a sense which we refer to as ``conformally non-tangential (CNT)'' (see Section \ref{se:transmission} for the precise definition).   
Let \[ W =  \left\{ g \in
 \mathcal{D}_{\text{harm}}(\mathcal{O}):  \overline{\partial} g \in V \right\}.    \]
  \begin{theorem} \label{th:jump_proper}  Let $R$, $\Gamma$, $\Omega_k$ and $\riem$ be as in \emph{Theorem \ref{th:general_T_is_isomorphism}}, and  
  let $H \in \mathcal{H}(\Gamma)$ be such that its extension $h$ to $\mathcal{D}_{\mathrm{harm}}(\mathcal{O})$
is in $W$.  Fix $q \in \riem$.  There are unique $h_k \in \mathcal{D}(\Omega_k)$, $k=1,\ldots,n$, 
and $h_{\riem} \in \mathcal{D}(\riem)_q$ so that
  if $H_k$, $H_\riem$ are their \emph{CNT} boundary values, then on each curve $\Gamma_k$, $H=- H_\riem + H_k$.  These  are given by
  \[    h_k = \left. J_q(\Gamma) h \right|_{\Omega_k}  \]
  for $k=1,\ldots,n,$ and 
  \[  h_\riem = \left. J_q(\Gamma) h \right|_{\riem}.   \]
 \end{theorem}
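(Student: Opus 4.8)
The plan is to deduce Theorem \ref{th:jump_proper} from Theorem \ref{th:general_T_is_isomorphism} by translating the isomorphism statement about one-forms into a statement about the Dirichlet-space primitives, and then identifying the relevant jump operator $J_q(\Gamma)$ with the inverse of $T(\mathcal{O},\riem)$ composed with integration. First I would set up the precise relationship between the form-level and function-level pictures: given $H \in \mathcal{H}(\Gamma)$ with extension $h \in W$, the form $\overline{\partial} h =: \overline{\alpha}$ lies in $V$ by definition of $W$, so Theorem \ref{th:general_T_is_isomorphism} produces a unique $\omega := T(\mathcal{O},\riem)\overline{\alpha} \in A(\riem)_e$. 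Since $\omega$ is exact on $\riem$, it has a well-defined primitive $h_\riem \in \mathcal{D}(\riem)$, which we normalize to vanish at the fixed point $q$; this is the definition of $h_\riem = J_q(\Gamma) h|_\riem$. Similarly, one must check that the restriction of the Cauchy-type operator to each $\Omega_k$ produces an exact form on $\Omega_k$ (automatic since $\Omega_k$ is simply connected) whose primitive is $h_k \in \mathcal{D}(\Omega_k)$. So the content to be verified is (a) these operators are well-defined and land in the stated spaces, and (b) the jump identity $H = -H_\riem + H_k$ holds on $\Gamma_k$, and (c) uniqueness.

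For the jump identity, the key computational step is a Cauchy/Green's-identity argument. One writes $h$ on $\mathcal{O}$ in terms of the Schiffer/Cauchy kernels applied to $\overline{\partial} h$, using the reproducing properties of $g_R$ and the relation between $\partial_z\partial_w g_R$ and $\partial_z \overline{\partial}_w g_R$; the difference of the resulting holomorphic forms, integrated, produces exactly $h$ as a boundary jump. Concretely, I expect one defines a single integral operator on all of $R \setminus \Gamma$ by the same kernel, restricts it to $\riem$ to get (a primitive of) $T(\mathcal{O},\riem)\overline{\partial} h$, and restricts it to $\Omega_k$ to get $h_k$; then the jump formula across $\Gamma_k$ for this integral operator — which is the Plemelj-Sokhotski step — gives $h_k - (-h_\riem)|_{\Gamma_k} = h|_{\Gamma_k}$, i.e. $H = -H_\riem + H_k$, once one passes to CNT boundary values. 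The CNT boundary value machinery from Section \ref{se:transmission} is what makes ``restriction to $\Gamma_k$'' rigorous for Dirichlet-space functions, and the quasicircle hypothesis is what guarantees these boundary values exist and that the jump operator is bounded. I would cite the earlier sections for the existence and boundedness of CNT limits and for the version of the jump formula on a single quasicircle, then assemble the pieces component by component.

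Uniqueness is the cleanest part: if $(h_k', h_\riem')$ is another collection with the same jump, then $h_\riem' - h_\riem$ extends across every $\Gamma_k$ (the jumps cancel) to a holomorphic function on all of $R$ with finite Dirichlet energy, hence is constant; since both vanish at $q$ the constant is $0$, so $h_\riem' = h_\riem$, and then $h_k' = h_k$ on each $\Gamma_k$ forces $h_k' = h_k$ on $\Omega_k$ by the same removability/identity argument (a holomorphic function on a simply connected domain with zero CNT boundary values vanishes, which again uses the quasicircle hypothesis via the earlier results). Alternatively, uniqueness of $h_\riem$ follows immediately from injectivity of $T(\mathcal{O},\riem)|_V$ in Theorem \ref{th:general_T_is_isomorphism} together with the exactness, which pins down $\omega = \partial h_\riem$ and hence $h_\riem$ up to the normalization at $q$.

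The main obstacle I anticipate is step (b): correctly handling the boundary values and the precise form of the Plemelj-Sokhotski jump across each quasicircle $\Gamma_k$, in particular keeping track of orientations and the sign in $H = -H_\riem + H_k$, and ensuring that the ``conformally non-tangential'' limits of the two one-sided integral operators actually exist and match up on $\Gamma_k$. This is where the quasicircle hypothesis and the transmission-operator results of Section \ref{se:transmission} do the real work; everything else is bookkeeping plus direct appeal to Theorem \ref{th:general_T_is_isomorphism}.
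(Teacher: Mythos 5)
Your framework (reduce to Theorem \ref{th:general_T_is_isomorphism} at the level of forms, pass to primitives normalized at $q$, then verify the boundary relation and uniqueness) is the right skeleton, but the central step (b) — the actual jump relation $H=-H_\riem+H_k$ — is left as an assertion, and the two routes you sketch for it do not go through. A direct ``Plemelj--Sokhotski step'' computing one-sided limits of the Cauchy-type integral is unavailable here: the $\Gamma_k$ are quasicircles, hence in general non-rectifiable, so there is no boundary integral to take a principal value of; the whole point of the paper's machinery is to avoid exactly this computation. The alternative you offer, citing the single-quasicircle jump formula and ``assembling the pieces component by component,'' also fails: the hypothesis $h\in W$ is a condition on the \emph{tuple} $(h_1,\ldots,h_n)$ (only the sum $\sum_k\iint_{\Omega_k}\alpha\wedge\overline{\partial}h_k$ vanishes), so the individual components $h_k$ need not satisfy the single-curve compatibility condition, and the single-curve jump theorem cannot be applied to each $J_q(\Gamma_k)h_k$ separately. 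The paper instead proves the boundary relation indirectly: it establishes the ``transmitted jump'' identity (Theorem \ref{th:transmitted_jump}) for all $n$ curves simultaneously by a density argument (Theorem \ref{th:average_Xr_dense}) reducing to data extending holomorphically into collars, where Royden's decomposition on analytic level curves applies; it then packages $J_q(\Gamma)$ as the isomorphism $\mathfrak{H}$ (Theorem \ref{th:jump_isomorphism}), computes $\mathfrak{H}$ on $\mathcal{D}(\mathcal{O})$ and on $\mathfrak{O}(\riem,\mathcal{O})\mathcal{D}(\riem)_q$ by a residue argument on analytic level curves (Theorem \ref{th:jump_dependable}), and deduces $h=-\mathfrak{O}(\riem,\mathcal{O})h_\riem+h_{\mathcal{O}}$ from \emph{injectivity} of $\mathfrak{H}$ — the boundary statement is then built into the definition of the transmission operator, with no boundary-limit computation anywhere.

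A secondary issue: your uniqueness argument asserts that matching CNT boundary values across each $\Gamma_k$ let the difference extend to a holomorphic function on all of $R$. Removability of a quasicircle for Dirichlet-bounded holomorphic functions with matching one-sided CNT values is not immediate and would itself need proof. The paper's uniqueness argument is both simpler and already available: the combination $-\mathfrak{O}(\riem,\mathcal{O})(u_\riem-h_\riem)+(u_{\mathcal{O}}-h_{\mathcal{O}})$ lies in $\mathcal{D}_{\mathrm{harm}}(\mathcal{O})$ and has zero CNT boundary values, hence vanishes by uniqueness of the Dirichlet extension, and applying $\mathfrak{H}$ (via Theorem \ref{th:jump_dependable}) forces $u_{\mathcal{O}}=h_{\mathcal{O}}$ and $u_\riem=h_\riem$. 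Your fallback of invoking injectivity of $T(\mathcal{O},\riem)|_V$ only shows uniqueness of the pair produced by the formula, not uniqueness among all pairs solving the jump problem.
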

 Here, $J_q(\Gamma)$ is an integral operator similar to the Cauchy integral, with integral kernel $-\partial_w g(w;z,q)/(\pi i)$.  See Section \ref{se:integral_operators} ahead for the precise definition, which involves approximations of the quasicircles by analytic curves.
 
 {It is classically known that there is such a jump decomposition for reasonably smooth curves and functions on Riemann surfaces; see \cite{Gakhov_book,Rodin_book}.  This was generalized to the case of a single quasicircle separating a compact Riemann surface into two components, and data in $\mathcal{H}(\Gamma)$ as above, in \cite{Schiffer_comparison}. A discussion of the literature can also be found there. The space $W$ corresponds to the classical condition for existence of a jump; see Section \ref{se:jump} ahead. } 
 
 The second application is an approximation theorem for Dirichlet spaces of holomorphic functions and Bergman spaces of holomorphic one-forms.
  
 \begin{theorem}  \label{th:dirichlet_density_squeeze} 
  Let $R$ be a compact Riemann surface and let $\riem, \Omega_k,\Gamma_k$ and $\riem', \Omega_k', \Gamma_k'$ each be as in  \emph{Theorem \ref{th:general_T_is_isomorphism}}.  Assume further that $\riem \subset \riem'$ and that the quasicircles $\Gamma_k'$ are isotopic to $\Gamma_k$ within the closure of $\Omega_k'$ for each $k=1,\ldots,n$.  
  
  If $\riem'' \subset R$ is any open set such that $\riem \subseteq \riem'' \subseteq \riem'$, then
  \begin{enumerate}
      \item the set of restrictions of elements of $\mathcal{D}(\riem'')$ to $\riem$ is dense in $\mathcal{D}(\riem)$
      \item  the set of restrictions of $A(\riem'')$ is dense in $A(\riem)$.
  \end{enumerate}
 \end{theorem}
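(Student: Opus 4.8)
\emph{Proof proposal.} The plan is to reduce the theorem to Theorem~\ref{th:general_T_is_isomorphism} by a chain of routine reductions, after which one essential point remains.

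First, since $\riem\subseteq\riem''\subseteq\riem'$, the restriction to $\riem$ of any element of $A(\riem'')$ (or $\mathcal{D}(\riem'')$) factors through $\riem''$, so the set of restrictions coming from $\riem'$ is contained in the set coming from $\riem''$; hence it suffices to treat the case $\riem''=\riem'$. Next, $h\mapsto\partial h=dh$ maps $\mathcal{D}(\riem)$ onto $A(\riem)_e$ with kernel the constants, is isometric for the Dirichlet seminorm, and commutes with restriction; since restriction sends $A(\riem')_e$ into $A(\riem)_e$, statement (1) is equivalent to the assertion that \emph{the restrictions of $A(\riem')_e$ are dense in $A(\riem)_e$}, while statement (2) is this assertion plus the requirement that restrictions of $A(\riem')$ meet every coset of $A(\riem)_e$ in $A(\riem)$. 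The latter I would settle at once: the isotopy hypothesis makes each collar $A_k:=\Omega_k\setminus\overline{\Omega_k'}$ a topological annulus, so $\riem'$ deformation retracts onto $\riem$ and restriction gives an isomorphism $H^1(\riem';\mathbb{C})\cong H^1(\riem;\mathbb{C})$; a spanning set of period classes is furnished by the restrictions of suitable meromorphic differentials of $R$ with all poles inside $\bigcup_k\Omega_k'$, which are holomorphic and square integrable on $\riem'$ and on $\riem$. As the period functionals over cycles in $\riem$ are bounded on $A(\riem)$, the space $A(\riem)_e$ is closed, and (2) follows from (1).

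So the content is (1). Here I would invoke Theorem~\ref{th:general_T_is_isomorphism} for both configurations, giving bounded isomorphisms $T(\mathcal{O},\riem)\colon V\to A(\riem)_e$ and $T(\mathcal{O}',\riem')\colon V'\to A(\riem')_e$, where $V,V'$ have finite codimension in $\overline{A(\mathcal{O})},\overline{A(\mathcal{O}')}$ and $T(\mathcal{O}',\riem')$ annihilates $(V')^\perp$ with image $A(\riem')_e$. Writing $\mathcal{O}=\overline{\mathcal{O}'}\cup\bigcup_kA_k$ and splitting the integral \eqref{eq:T_general_definition}, one has for $\overline{\alpha}\in V$ and $z\in\riem$
\[
 T(\mathcal{O},\riem)\overline{\alpha}\;=\;\bigl(T(\mathcal{O}',\riem')\,\overline{\alpha}|_{\mathcal{O}'}\bigr)\big|_{\riem}\;+\;\sum_{k=1}^{n}T(A_k,\riem)\bigl(\overline{\alpha}|_{A_k}\bigr),
\]
where $T(A_k,\riem)$ denotes integration of the Schiffer kernel over the collar $A_k$. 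The first summand is the restriction to $\riem$ of an element of $A(\riem')_e$ (replacing $\overline{\alpha}|_{\mathcal{O}'}$ by its $V'$-component does not change the value). Since $T(\mathcal{O},\riem)$ maps $V$ \emph{onto} $A(\riem)_e$, statement (1) follows once one shows that for every $\overline{\alpha}\in V$ the collar remainder $\sum_k T(A_k,\riem)(\overline{\alpha}|_{A_k})$ lies in the $A(\riem)$-closure of $\{\beta|_{\riem}:\beta\in A(\riem')_e\}$.

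This last step I expect to be the main obstacle, and it is exactly where the quasicircle hypothesis on the $\Gamma_k$ is indispensable; the collar operators $T(A_k,\riem)$ are bounded precisely because of the quasiconformal reflection across $\Gamma_k$ that underlies Theorem~\ref{th:general_T_is_isomorphism}. The cleanest route to finish is probably the dual formulation. An $\omega=dh\in A(\riem)_e$ orthogonal to every restriction of $A(\riem')_e$ satisfies, after integration by parts --- legitimate because $\Gamma\subset\riem'$ and because $\mathcal{D}(\riem)$ has CNT boundary values (Section~\ref{se:transmission}) --- the identity $\oint_{\Gamma}\overline{h'}\,\omega=0$ for all $h'\in\mathcal{D}(\riem')$. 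One then uses that the traces $h'|_{\Gamma}$ are dense in the conformally natural Dirichlet trace space of the quasicircle $\Gamma$ (the only possible obstruction, a relatively compact component of $\riem'\setminus\overline{\riem}$, being excluded by the annular collar structure), together with non-degeneracy of the boundary pairing on the quasicircle --- which rests on the same quasiconformal estimates as Theorem~\ref{th:general_T_is_isomorphism} --- to conclude that $\omega=0$. Granting this, (1) holds, and with the reductions above the theorem follows, the Dirichlet-space and Bergman-space statements emerging together.
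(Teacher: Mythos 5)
Your preliminary reductions are sound and essentially parallel to the paper's: the sandwich $\riem\subseteq\riem''\subseteq\riem'$ is handled exactly as in the paper's proof of Theorem \ref{th:dirichlet_density_squeeze}, and your reduction of (2) to (1) by matching periods is the same idea as the paper's Theorem \ref{th:form_general_isotopy_density} (the paper takes the period-spanning forms from $A(R')$ with $R'$ the double of $\riem'$ rather than from meromorphic differentials on $R$, but either works). The problem is that statement (1) — the entire content of the theorem — is not actually proved. Two specific objections. First, your decomposition of $T(\mathcal{O},\riem)$ already has a flaw: Theorem \ref{th:general_T_is_isomorphism} says nothing about $T(\mathcal{O}',\riem')$ on $(V')^{\perp}$, and it is not true in general that $T$ annihilates restrictions of $\overline{A(R)}$ when the integration is over $\mathcal{O}'$ rather than all of $R$; moreover $T(\mathcal{O}',\riem')\,\overline{\alpha}|_{\mathcal{O}'}$ for $\overline{\alpha}|_{\mathcal{O}'}\notin V'$ need not be exact on $\riem'$, so your first summand is not known to restrict an element of $A(\riem')_e$. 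Second, and decisively, the "dual formulation" you propose for the collar remainder is circular: the assertion that the traces $h'|_{\Gamma}$ of $h'\in\mathcal{D}(\riem')$ are dense in the Dirichlet trace space of the quasicircles $\Gamma$ is, via the transmission operators of Section \ref{se:transmission}, essentially equivalent to statement (1) itself, and nothing in the paper (or in standard theory) hands you this as a black box. You have correctly located where the difficulty lives, but you have not crossed it.

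For comparison, the paper's route through (1) is quite different and supplies exactly the missing mechanism. It uses the jump operator rather than the Schiffer operator directly: $J_q(\Gamma)_{\riem}$ restricted to $W'$ is an isomorphism onto $\mathcal{D}(\riem)_q$ (Theorem \ref{th:jump_isomorphism_just_one_side}); the subspace $\oplus_k\mathfrak{G}(\Omega_{k,\epsilon},\Omega_k)X_\epsilon$ of functions "bounced" from the smaller collar domains $\Omega_{k,\epsilon}$ is dense in $W$ (Theorem \ref{th:average_Xr_dense}, a genuinely nontrivial projection argument handling the finitely many period constraints); for such data the limiting Cauchy-type integral can be replaced by an integral over analytic level curves strictly inside $\Omega_k$, whence by Royden's explicit formula $J_q(\Gamma)_\riem$ of these elements extends holomorphically across $\Gamma$ into $\Omega_{1,\epsilon}\cup\cdots\cup\Omega_{n,\epsilon}$ (Theorem \ref{th:extension_collar} and Corollary \ref{co:holo_extension}). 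This gives Theorem \ref{th:central_density_theorem}. Finally — and this step has no counterpart in your sketch — a sewing construction (Corollary \ref{co:density_general_isotopy}, using \cite{RS_monster}) caps the boundary curves $\Gamma_k'$ of $\riem'$ with disks to produce a compact surface in which the complementary components of $\riem$ are simply connected disks containing the collars; this is precisely where the isotopy hypothesis on $\Gamma_k$ and $\Gamma_k'$ is used, and it converts "extends holomorphically into a collar" into "is the restriction of an element of $\mathcal{D}(\riem')$." Without some substitute for the holomorphic-extension-plus-sewing mechanism, your argument does not close.
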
	

 Since one may always view $\riem'$ as embedded in its double, one can remove the mention of the outer surface and obtain
 \begin{corollary}  \label{co:embedding_in_double}
  Let $\riem'$ be a bordered Riemann surface whose boundary consists of $n$ curves $\Gamma_1',\ldots,\Gamma_n'$ homeomorphic to $\mathbb{S}^1$, whose double is compact.  Assume that $\riem \subset \riem'$ is an open set bordered by $n$ quasicircles $\Gamma_1,\ldots,\Gamma_n$, such that $\Gamma_k$ is isotopic to $\Gamma_k'$ in $(\riem' \cup \partial \riem') \backslash \riem$ for $k=1,\ldots,n$.  For any open set $\riem''$ such that $\riem \subseteq \riem'' \subseteq \riem'$, 
  \begin{enumerate}
      \item the set of restrictions of $\mathcal{D}(\riem'')$ to $\riem$ is dense in $\mathcal{D}(\riem).$
      \item the set of restrictions of elements of $A(\riem'')$ to $\riem$ is dense in $A(\riem).$
  \end{enumerate}
    
 \end{corollary}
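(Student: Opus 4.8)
The plan is to reduce the corollary to Theorem~\ref{th:dirichlet_density_squeeze} by realizing all of the relevant data inside a single compact Riemann surface $R$. The first step is to cap off the boundary of $\riem'$ with disks. Since the double of $\riem'$ is a compact Riemann surface and its anti-holomorphic involution fixes exactly $\partial\riem'$, the boundary curves $\Gamma_1',\ldots,\Gamma_n'$ are real-analytic Jordan curves in that double. Hence one may glue a closed disk to $\riem'\cup\partial\riem'$ conformally along each $\Gamma_k'$, obtaining a compact Riemann surface $R$ whose complex structure restricts to that of $\riem'$. Writing $\Omega_k'$ for the open disk glued in along $\Gamma_k'$ and $\mathcal{O}'=\Omega_1'\cup\cdots\cup\Omega_n'$, the $\Omega_k'$ are simply connected, have pairwise disjoint closures, and are each bounded by $\Gamma_k'$, which being analytic is in particular a quasicircle; and $\riem'$ is exactly the complement of $\overline{\mathcal{O}'}$ in $R$. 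Thus $(R,\riem',\{\Omega_k'\},\{\Gamma_k'\})$ satisfies the hypotheses of Theorem~\ref{th:general_T_is_isomorphism}.

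The second step is to produce the inner data inside this same $R$. Each $\Gamma_k$ is a quasicircle with a conformal chart about it lying in $R$, hence is a quasicircle in $R$; and since $\Gamma_k$ is isotopic to $\Gamma_k'$ within $(\riem'\cup\partial\riem')\setminus\riem\subset R$, while $\Gamma_k'$ separates $R$ (bounding the disk $\Omega_k'$), the curve $\Gamma_k$ separates $R$ into two components. Let $\Omega_k$ be the one that does not meet $\riem$ and put $\mathcal{O}=\Omega_1\cup\cdots\cup\Omega_n$. Then $\Omega_k\supset\Omega_k'$, and $\Omega_k$ is obtained from the disk $\Omega_k'$ by adjoining the annular region between $\Gamma_k'$ and $\Gamma_k$ swept out by the isotopy, so $\Omega_k$ is a simply connected domain bounded by the quasicircle $\Gamma_k$; using the isotopy hypothesis together with the disjointness of the $\Gamma_k$, one checks that the closures $\overline{\Omega_k}$ are pairwise disjoint and that $R\setminus\overline{\mathcal{O}}=\riem$. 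Hence $(R,\riem,\{\Omega_k\},\{\Gamma_k\})$ also satisfies the hypotheses of Theorem~\ref{th:general_T_is_isomorphism}, we have $\riem\subset\riem'$, and $\Gamma_k$ is isotopic to $\Gamma_k'$ within $\overline{\Omega_k}$. Therefore all the hypotheses of Theorem~\ref{th:dirichlet_density_squeeze} are satisfied, and applying that theorem to any open set $\riem''$ with $\riem\subseteq\riem''\subseteq\riem'$ yields precisely the two density assertions claimed.

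The only genuinely non-routine point is the capping-off construction: one must know that a bordered Riemann surface whose double is compact can be holomorphically completed to a compact surface by attaching disks. As noted, this follows from the fact that its boundary curves are real-analytic in the double, so that conformally welding a disk along each of them is unobstructed. Once $R$ is in hand, the remaining assertions --- that each $\Omega_k$ is a simply connected region bounded by a quasicircle, that $R\setminus\overline{\mathcal{O}}=\riem$, and that the relevant closures are pairwise disjoint --- are a cut-and-paste verification whose only substantive input is the isotopy hypothesis, used to pin down the topology of $R\setminus\overline{\riem}$.
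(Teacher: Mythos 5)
Your proof is correct and lands on essentially the same reduction as the paper: realize all the data inside a compact Riemann surface and invoke Theorem \ref{th:dirichlet_density_squeeze}. The only genuine difference is the choice of ambient compact surface. The paper takes $R$ to be the double $\riem^D$ of $\riem'$, observes that $\partial\riem'$ becomes a union of analytic curves there, and applies Theorem \ref{th:dirichlet_density_squeeze} directly; note that in the double the complement of $\mathrm{cl}\,\riem'$ is the mirror copy of $\riem'$, which is connected and in general not simply connected, so the paper is implicitly relying on the fact that the proof of Theorem \ref{th:dirichlet_density_squeeze} (which runs through Corollary \ref{co:density_general_isotopy} and Theorem \ref{th:form_general_isotopy_density}) only needs $\riem'$ to be bounded by quasicircles inside some compact surface, not that its complement be a union of quasidisks. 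Your version instead caps $\riem'$ off with disks along the boundary curves (analytic by reflection through the double), which is precisely the sewing step the paper performs inside the proof of Corollary \ref{co:density_general_isotopy}; this has the merit of producing a configuration in which $\riem'$ is literally the complement of $n$ simply connected quasidisks with pairwise disjoint closures, so Theorem \ref{th:dirichlet_density_squeeze} applies exactly as stated. Either way the mathematical substance is identical, and the cut-and-paste verifications you defer (that each $\Omega_k$ is a simply connected domain bounded by the quasicircle $\Gamma_k$, that the closures are pairwise disjoint, and that $R\setminus\mathrm{cl}\,\mathcal{O}=\riem$) are exactly the ones the paper also leaves implicit.
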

 Here, note that we mean that the boundaries are borders \cite{Ahlfors_Sario}.\\
 
  These results should be compared to a result of N. Askaripour and T. Barron \cite{AskBar}, which says that if $D_1$ and $D_2$ are open subsets of a Riemann surface $R$ such that $D_1 \subseteq D_2$, and the lift to the universal cover (the disk $\disk$) of $D_1$ and $D_2$ are Carath\'eodory sets contained in a smaller disk, then restrictions of elements of the Bergman space $A(D_2)$ to $D_1$ are dense in $A(D_1)$.  As far as we know, their result is the first general result for nested Riemann surfaces, for $L^2$ approximability, {as opposed to uniform approximation, e.g. P. Gauthier and F. Sharifi \cite{GauthierSharifi_Luzin} (see also F. Sharifi \cite{Sharifi_thesis} for a literature review). \\
  
  The approach of Askaripour and Barron uses a lift to the universal cover and application of Poincar\'e series. It would be of great interest to obtain our approximation theorems by applying their methods.  Our approach here ultimately relies on sewing.}\\
 
 The third application involves another kind of operator which we now define.  Let $R$ be a compact Riemann surface, $\riem$ and $\riem'$ be Riemann surfaces such that $\riem \subset \riem' \subset R$ and such that $\mathrm{cl}\riem\subset \riem'$ (where the closure is with respect to the topology of $R$) and the inclusion maps from $\riem$ to $\riem'$ and $\riem'$ to $R$ are holomorphic.  
  
  \begin{align*}
 S(\riem,\riem'): A(\riem) & \rightarrow A(\riem') \\
  \alpha & \mapsto - \frac{1}{\pi i} \iint_{\riem}  \partial_z 
  \overline{\partial}_w g_{\riem'}(w;z,q)\wedge_w \alpha(w)
\end{align*}
The kernel of this integral operator 
\[  K_{\riem'}(z,w) = - \frac{1}{\pi i} \partial_z \overline{\partial}_w g_{\riem'}(w;z,q) \]
is the Bergman kernel of $\riem'$.  Note however that we integrate only over $\riem$ and not all of $\riem'$.  

We then have 
\begin{theorem} \label{th:Bergman_comparison_dense} Let $R$, $\riem$, and $\riem'$ be as  in Theorem \emph{\ref{th:dirichlet_density_squeeze}}.  Then $S(\riem,\riem')$ has trivial kernel and dense image.
\end{theorem}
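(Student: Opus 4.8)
\emph{Proof proposal.} The plan is to read off the single identity that makes the statement work and then to reduce both claims to the density theorem already obtained.

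First I would record the basic structural fact. Writing $E\colon A(\riem)\to L^2_{(1,0)}(\riem')$ for extension by zero — an isometry onto a closed subspace, since $\|E\alpha\|_{\riem'}=\|\alpha\|_\riem$ — and $P_{\riem'}$ for the Bergman projection of $\riem'$ (given on $L^2_{(1,0)}(\riem')$ by integration against the Bergman kernel $K_{\riem'}$), the reproducing property of $K_{\riem'}$ yields $S(\riem,\riem')=P_{\riem'}\circ E$. Equivalently — and this is all that will be used — for all $\alpha\in A(\riem)$ and $\gamma\in A(\riem')$ one has the adjoint-type identity
\[
 \bigl( S(\riem,\riem')\alpha,\ \gamma \bigr)_{\riem'} \;=\; \bigl( \alpha,\ \gamma|_\riem \bigr)_\riem ,
\]
which follows from the reproducing identity for $K_{\riem'}$ together with an interchange of the order of integration in the double integral defining the left-hand side; the interchange is legitimate by the Cauchy--Schwarz bound on $K_{\riem'}$ and the boundedness of $S(\riem,\riem')$ recorded earlier.

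Granting this, triviality of the kernel is immediate from the density theorem. If $S(\riem,\riem')\alpha=0$, then $(\alpha,\gamma|_\riem)_\riem=0$ for every $\gamma\in A(\riem')$, so $\alpha$ is orthogonal to the linear span of $\{\gamma|_\riem : \gamma\in A(\riem')\}$; by Theorem \ref{th:dirichlet_density_squeeze}(2), applied with $\riem''=\riem'$, that span is dense in $A(\riem)$, whence $\alpha=0$. For density of the image, suppose $\gamma\in A(\riem')$ is orthogonal to the image of $S(\riem,\riem')$. Then $0=(S(\riem,\riem')\alpha,\gamma)_{\riem'}=(\alpha,\gamma|_\riem)_\riem$ for every $\alpha\in A(\riem)$; choosing $\alpha=\gamma|_\riem\in A(\riem)$ gives $\|\gamma|_\riem\|_\riem=0$, so $\gamma$ vanishes on the nonempty open set $\riem$. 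Since $\riem'$ is connected, unique continuation of holomorphic forms forces $\gamma\equiv 0$ on $\riem'$. Thus the orthogonal complement of the image in $A(\riem')$ is trivial, and the image is dense.

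In effect the analytic substance of this theorem is entirely contained in Theorem \ref{th:dirichlet_density_squeeze}; the only step here demanding any care is the justification of the displayed adjoint identity — i.e.\ the application of Fubini's theorem to the iterated integral and the use of the Bergman reproducing property — and this is routine given the kernel estimates and boundedness statements established before this point. If the identification $S(\riem,\riem')=P_{\riem'}\circ E$ has already been made, even that step is immediate, so I do not expect a genuine obstacle beyond invoking Theorem \ref{th:dirichlet_density_squeeze} correctly.
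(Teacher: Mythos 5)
Your argument is essentially the paper's: the adjoint identity $\bigl(S(\riem,\riem')\alpha,\gamma\bigr)_{\riem'}=\bigl(\alpha,\gamma|_{\riem}\bigr)_{\riem}$ is exactly Theorem \ref{th:restriction_adjoint} (stated there as $S(\riem,\riem')=R(\riem',\riem)^*$, with the Fubini interchange justified via an exhaustion argument and the hyperbolic metric), and the paper then deduces the two claims precisely as you do, from $\operatorname{Ker}(S)=(\operatorname{cl}\operatorname{Im}R)^\perp=\{0\}$ by the density of restrictions and $\operatorname{cl}\operatorname{Im}(S)=\operatorname{Ker}(R)^\perp=A(\riem')$ by unique continuation. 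So the proposal is correct and follows the same route; the only point where the paper spends more effort than you allot is the justification of the interchange of integration, which it isolates in the proof of Theorem \ref{th:restriction_adjoint}.
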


 The operators $T(\mathcal{O},\riem)$ and $S(\riem,\riem')$ are special cases of what we call Schiffer comparison operators. % By this we mean an integral operator on the space of $L^2$ anti-holomorphic or holomorphic one-forms on an open subset $\mathcal{O}$ of such a Riemann surface, obtained by integrating either against the Schiffer kernel $\partial_z \partial_w g/(\pi i)$ or the Bergman kernel $-\partial_z \overline{\partial}_{w} g/(\pi i)$ of the Riemann surface respectively.  
 Note that the domain of integration of $T(\mathcal{O},\riem)$ is the subset $\mathcal{O}$ of $R$, and thus the operator  depends on both $R$ and $\mathcal{O}$.  Similarly, $S(\riem,\riem')$ depends on both $\riem'$ and $\riem$.  In general, we are interested in the extent to which information about the two surfaces $\mathcal{O}$ and $\riem$, or $\riem$ and $\riem'$, is reflected in the properties of the Schiffer operators.  
 
M. Schiffer and others \cite{BergmanSchiffer,Courant_Schiffer,Schiffer_first,Schiffer_Spencer} have investigated these comparison operators in many cases.  The  Riemann surface $R$ might be the Riemann sphere, or a subset of the plane bounded by analytic curves; while the subset $\mathcal{O}$ might be a multiply-connected planar domain or a subdomain of a compact surface $R$.

\end{subsection}
\end{section}
\begin{section}{The jump and Schiffer comparison operators}
\begin{subsection}{A Cauchy-type operator on compact surfaces and a Schiffer comparison operator}  \label{se:integral_operators}
 In this section we bring together various identities for the integral operators, and generalize some of them to the case of several boundary curves. These include expressions for the integral operators in terms of a kind of Cauchy-integral.\\ 
 
  We begin with the case of one boundary curve.  Let $R$ be a compact Riemann surface and let $\Gamma$ be a quasicircle, whose complement we assume to consist of two connected components $\Omega$ and $\riem$.  Let $g_\Omega$ denote the Green's function of $\Omega$, and for fixed $p \in \Omega$ and $s >0$ let $\Gamma^{p}_{s}$ be the level curves $\{ w: g_\Omega(w,p) =s\}$.  For $s$ sufficiently small, these are in fact analytic simple closed curves, and we endow them with a positive orientation with respect to $p$.  Fixing $q \in \riem$, we define
   \begin{align}  \label{eq:jump_definition}
     J_q(\Gamma) : \mathcal{D}_{\text{harm}}(\Omega) & 
      \rightarrow \mathcal{D}_{\text{harm}}(\Omega \cup \riem)_q
      \nonumber \\
     h & \mapsto - \lim_{s \searrow 0} 
        \frac{1}{\pi i}   \int_{\Gamma^{p}_s} \partial_w g_R (w;z,q) h(w)  
  \end{align}
  for $z \in R \backslash \Gamma$. This operator indeed takes   $\mathcal{D}_{\text{harm}}(\Omega)$ into $\mathcal{D}_{\text{harm}}(\Omega \cup \riem)$ by \cite[Corollary 4.3]{Schiffer_comparison}, where by  the latter we mean a function on the disjoint union which is harmonic on $\Omega$ and $\riem$.  
  It was furthermore shown that the operator is bounded and independent of $p$.  We may write the level curves $\Gamma_s^p$ in terms of a conformal map $f:\disk \rightarrow \Omega$ such that $f(0)=p$, as the images $f(\{ z: |z|=e^{-s}\})$ of circles centred at $0$. \\ 
  
   Recall the operator $T(\mathcal{O},\riem)$ defined by \eqref{eq:T_general_definition} in the introduction.  Specializing to the case that $\mathcal{O}$ consists of a single simply-connected domain $\Omega$, yields an operator which we denote by $T(\Omega, \riem)$.  It follows from \cite[Theorem 3.9]{Schiffer_comparison} that this operator is bounded.   We also define
  \begin{align*}
   T(\Omega,\Omega) : \overline{A(\Omega)} & \rightarrow A(\Omega) \\
   \overline{\alpha} & \mapsto \frac{1}{\pi i} \iint_{\Omega} \partial_z \partial_w g_{{R}}(w;z,q) \wedge_w \overline{\alpha(w)} 
  \end{align*} 
  where $z \in \Omega$, which is also bounded by \cite[Theorem 3.9]{Schiffer_comparison}.

  Finally define
  \begin{align*}
    S(\Omega,R):A(\Omega) & \rightarrow A(R) \\
    \alpha & \mapsto - \frac{1}{\pi i} \iint_\Omega \partial_z \overline{\partial}_{w} g_{{R}}(w;z,q) \wedge_w \alpha(w)
  \end{align*}
  for $z,q \in R$, which is bounded because the kernel function is globally bounded \cite{Schiffer_comparison}.
  
  The conjugate operator is defined by
  \begin{align} \label{eq:conjugate_definition}
    \overline{S}(\Omega,R): \overline{A(\Omega)} & \rightarrow \overline{A(R)} \nonumber \\
    \overline{\alpha} & \mapsto \overline{S(\Omega,R) \alpha}. 
  \end{align}
  Conjugates of $T$ operators are defined similarly.
  
  The operators $J_q (\Gamma), $ $T(\Omega, \riem),$ $T(\Omega, \Omega)$ and $S(\Omega,R)$ satisfy the identities \cite[Theorem 4.2]{Schiffer_comparison}
   \begin{align} \label{eq:derivative_J_identities}
    \partial J_q(\Gamma) h (z) & = - T(\Omega,\riem) \overline{\partial} h(z),  \    & z \in \riem \nonumber \\
    \partial J_q(\Gamma) h(z) & = \partial h(z) - T(\Omega,\Omega) \overline{\partial} h(z), \   
    & z \in \Omega \\
    \overline{\partial} J_q(\Gamma) h(z)& = \overline{S}(\Omega,R) \,\overline{\partial} h(z), \ 
    & z \in \Omega \cup \riem. \nonumber  
   \end{align}
  
  We would like to generalize these identities to the case of many boundary curves.  First, we make a general remark on notation.
  \begin{remark}[Direct sum notation] \label{re:natural_isomorphism}
  Let $\mathcal{O}$ be as in the introduction; that is, $\mathcal{O} = \Omega_1 \cup \cdots \cup \Omega_n$ for $\Omega_k$ simply-connected, bordered by quasicircles, with pairwise disjoint closures.  In that case, we have a natural isomorphism 
  \begin{align} \label{eq:obvious_isomorphism}
   A(\mathcal{O}) & \xrightarrow{\cong} \bigoplus_{k=1}^n A(\Omega_k) \nonumber \\
   \alpha & \mapsto \left( \left. \alpha \right|_{\Omega_1},\ldots, \left. \alpha \right|_{\Omega_n} \right).  
  \end{align}
  The inverse of this isomorphism is 
  \[  (\alpha_1,\ldots,\alpha_n) \rightarrow \sum_{k=1}^n \alpha_k \chi_k \]
  where $\chi_k$ are the characteristic functions of $\Omega_k$ for $k=1,\ldots,n$.  
   To avoid needless insertion of this isomorphism into every formula, for $\alpha \in A(\mathcal{O})$ say, we use the notation $\alpha_k = \left. \alpha \right|_{\Omega_k}$, and furthermore write without qualification
   \[  \alpha = (\alpha_1,\ldots,\alpha_n). \]
  Similarly, we have isomorphisms between $A(\mathcal{O})_e$ and $\bigoplus_{k=1}^n A(\Omega_k)_e$; $\mathcal{D}_{\text{harm}}(\mathcal{O})$ and $\bigoplus_{k=1}^n \mathcal{D}(\Omega_k)$; and so on. 
  \end{remark}
  
  With this convention in mind, for $(\overline{\alpha}_1,  \ldots, \overline{\alpha}_n)\in \bigoplus_{k=1}^n \overline{A(\Omega_k)}$, observe that $T(\mathcal{O},\riem)$ can be written 
\begin{align} \label{eq:TOsig_sum}
  [T(\mathcal{O}, \riem)(\overline{\alpha}_1, \dots, \overline{\alpha}_n)](z) &:= \frac{1}{\pi i}\sum_{k=1}^n \iint_{\Omega_k, w} \partial_z\partial_w g_{{R}}(w; z, q) \wedge \overline{\alpha}_k(w) \nonumber \\
&=\sum_{k=1}^n [T(\Omega_k, \Omega^*_k)\overline{\alpha}_k]_{\riem}(z)  
\end{align}
if $z\in \riem$ (recall that $\Omega_k^*$ is the complement of the closure of $\Omega_{{k}}$ in $R$).  Here for a set $A$, by $[\cdot]_{A}$ we mean the restriction to $A$.  The above expression shows that $T(\mathcal{O},\riem)$ is bounded as claimed in the introduction. 
For fixed $j=1, \dots, n$, we now define
\begin{align}  \label{eq:TOOmega_sum}
T(\mathcal{O}, \Omega_j): 
\overline{A(\mathcal{O})} & \rightarrow A(\Omega_j)  \nonumber\\
(\overline{\alpha}_1,\ldots,\overline{\alpha}_n) & \mapsto  \sum_{\substack{k=1\\ k\neq j}}^n [T(\Omega_k, \Omega^*_k)\overline{\alpha}_k]_{\Omega_j} +  T(\Omega_j, \Omega_j)\overline{\alpha}_j. 
\end{align} 
Again $[T(\Omega_k, \Omega^*_k)\overline{\alpha}_k]_{\Omega_j}$ is the restriction of $T(\Omega_k, \Omega^*_k)\overline{\alpha}_k$ to $\Omega_j$.  As above, boundedness follows directly from boundedness in the case for one boundary curve.  Finally define the bounded operator
\begin{align*}
 S(\mathcal{O},R) : A(\mathcal{O}) & \rightarrow A(R) \\
 \alpha & \mapsto \iint_{\mathcal{O}} K_R(\cdot,w) \wedge_w \alpha(w). 
\end{align*}
which again can be written as a sum of integrals over $\Omega_k$:  
\begin{equation} \label{eq:SOR_sum}
 S(\mathcal{O},R) \alpha =  \sum_{k=1}^n  S(\Omega_k,R) {\alpha}_k
\end{equation}
  
  We also set 
  \[  \Gamma = \Gamma_1 \cup \cdots \cup \Gamma_n \] 
  and define for $(h_1,\ldots,h_n) \in \mathcal{D}_{\text{harm}}(\mathcal{O})$ and $z \in R \backslash \Gamma$
 \begin{equation}\label{JE}%Jump Expansion 
 [J_q(\Gamma)(h_1, \dots, h_n)](z)=\sum_{k=1}^n [J_q(\Gamma_k)h_k] (z).
 \end{equation}
 The identities (\ref{eq:derivative_J_identities}) can now be generalized as follows.
\begin{theorem}\label{DJO}%Differential of Jump Operator 
If $q$ is in $R \backslash \Gamma$ and $(h_1, \dots, h_n)\in \mathcal{D}_{\mathrm{harm}}(\mathcal{O})$ then 
\begin{align*}
\partial [J_q(\Gamma)(h_1, \dots, h_n)](z)&= -[T(\mathcal{O}, \Sigma)(\overline{\partial}h_1, \dots, \overline{\partial}h_n)](z), \hspace{2.55cm}z\in \Sigma,\\
\partial [J_q(\Gamma)(h_1, \dots, h_n)](z)&=-[T(\mathcal{O}, \Omega_j)(\overline{\partial}h_1, \dots, \overline{\partial}h_n)](z) + \partial h_j(z), \hspace{.6cm}z\in \Omega_j,\\
\overline{\partial} [J_q(\Gamma)(h_1, \dots, h_n)](z)&=[\overline{S}(\mathcal{O},R) (\overline{\partial}h_1, \dots, \overline{\partial}h_n)](z) \hspace{2.6cm} z\in R\backslash \Gamma.\\
\end{align*}
\end{theorem}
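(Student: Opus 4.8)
The strategy is to reduce the three identities to the one-curve case \eqref{eq:derivative_J_identities} by linearity together with a case analysis on which component of $R\backslash\Gamma$ the point $z$ lies in. Recall from the construction in this section that each quasicircle $\Gamma_k$ separates $R$ into exactly two connected components, namely $\Omega_k$ and its complement $\Omega_k^*$; the latter is connected (being null-homologous, $\Gamma_k$ is separating, and it bounds the simply-connected $\Omega_k$ on one side), it contains $\riem$, and it contains every $\Omega_j$ with $j\neq k$ together with the curve $\Gamma_j$. By the definition \eqref{JE} we have $J_q(\Gamma)=\sum_{k=1}^n J_q(\Gamma_k)$, and since $\partial$ and $\overline{\partial}$ are linear it suffices to evaluate $\partial J_q(\Gamma_k)h_k$ and $\overline{\partial}J_q(\Gamma_k)h_k$ on each component of $R\backslash\Gamma$ and add the results. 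For each $k$ the three identities \eqref{eq:derivative_J_identities} of \cite{Schiffer_comparison}, applied with $\Omega=\Omega_k$ and with $\Omega_k^*$ playing the role of the second component, furnish exactly these expressions; the boundedness and codomain statements needed to make sense of every term have already been recorded earlier in this section, so what remains is purely the verification of pointwise identities.

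I would begin with the antiholomorphic identity, which is the cleanest: if $z\in R\backslash\Gamma$ then $z\in R\backslash\Gamma_k=\Omega_k\cup\Omega_k^*$ for every $k$, so the third line of \eqref{eq:derivative_J_identities} applies to every summand and gives $\overline{\partial}J_q(\Gamma_k)h_k(z)=\overline{S}(\Omega_k,R)\overline{\partial}h_k(z)$. Summing over $k$ and using the decomposition \eqref{eq:SOR_sum} of $S(\mathcal{O},R)$, hence of its conjugate, yields the last identity. For the holomorphic identities, first take $z\in\riem$: since $\riem\subset\Omega_k^*$ for every $k$, the first line of \eqref{eq:derivative_J_identities} applies to each summand and contributes $-[T(\Omega_k,\Omega_k^*)\overline{\partial}h_k]_{\riem}(z)$; summing over $k$ and comparing with \eqref{eq:TOsig_sum} gives $\partial[J_q(\Gamma)(h_1,\dots,h_n)](z)=-[T(\mathcal{O},\riem)(\overline{\partial}h_1,\dots,\overline{\partial}h_n)](z)$. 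Finally take $z\in\Omega_j$ and split the sum according to $k\neq j$ and $k=j$: for $k\neq j$ we have $z\in\Omega_j\subset\Omega_k^*$, so the first line of \eqref{eq:derivative_J_identities} contributes $-[T(\Omega_k,\Omega_k^*)\overline{\partial}h_k]_{\Omega_j}(z)$, while for $k=j$ the second line contributes $\partial h_j(z)-T(\Omega_j,\Omega_j)\overline{\partial}h_j(z)$. Adding these and recognizing the definition \eqref{eq:TOOmega_sum} of $T(\mathcal{O},\Omega_j)$ gives the middle identity.

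Since the argument is essentially bookkeeping, I do not anticipate a serious obstacle; the only points requiring care are the verification that the one-curve results genuinely apply to each individual $\Gamma_k$ — which reduces to the connectedness of $\Omega_k^*$ and the fact that $\riem$ is connected, so that $\Omega_k^*$ is indeed the $\riem$-type component — and keeping careful track of which component of $R\backslash\Gamma_k$ each $z$ occupies, which is precisely the case split above. One should also confirm that the common basepoint $q$ is admissible in every summand; this is immediate when $q\in\riem$ (the case used in the applications, where $q\in\Omega_k^*$ for all $k$), and in any event the Schiffer kernel $\partial_z\partial_w g_R$ appearing in the $T$-operators is independent of $q$.
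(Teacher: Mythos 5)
Your proposal is correct and follows essentially the same route as the paper: decompose $J_q(\Gamma)$ as the sum of the one-curve operators $J_q(\Gamma_k)$, apply the known identities \eqref{eq:derivative_J_identities} to each summand according to whether $z$ lies in $\Omega_k$ or $\Omega_k^*$, and reassemble via \eqref{eq:TOsig_sum}, \eqref{eq:TOOmega_sum}, and \eqref{eq:SOR_sum}. Your bookkeeping of signs in the $z\in\Omega_j$ case is in fact cleaner than the paper's displayed computation, which drops two minus signs en route to the (correct) stated identity.
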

\begin{proof}
The first and third identities follow directly from  (\ref{eq:derivative_J_identities}), (\ref{eq:TOsig_sum}), and (\ref{eq:SOR_sum}).

Now let $z\in \Omega_j$ for fixed $j$; in this case for every $k\neq j$, $z\in  \Omega^*_k$.  Denoting by $J_q(\Gamma)_{\Omega_k^*}$ the operator obtained by restricting the output of $J_q(\Gamma)$ to $\Omega_k^*$, and similarly for $J_q(\Gamma)_{\Omega_j}$, we have
\begin{align*}
\partial [J_q(\Gamma)(h_1, \dots, h_n)]_{\Omega_j}&=\partial \sum_{k=1}^n [J_q(\Gamma_k)h_k]_{\Omega_j} (z)\\
&=\sum_{k\neq j} \partial [J_q(\Gamma_k)_{\Omega^*_k}h_k]_{\Omega_j}(z)+ \partial [J_q(\Gamma_j)_{\Omega_j}h_j](z)\\
&= \sum_{k\neq j} [T(\Omega_k, \Omega^*_k)\overline{\partial}h_k]_{\Omega_j}(z)+\partial h_j(z)+ T(\Omega_j, \Omega_j)\overline{\partial}h_j  (z)\\
&=T(\mathcal{O}, \Omega_j)(\overline{\partial}h_1, \dots, \overline{\partial}h_n)(z) + \partial h_j(z).
\end{align*}
\end{proof}

Throughout the paper, we will denote 
\[     J_q(\Gamma)_{\riem} h = \left. J_q h \right|_{\riem}, \ \ J_q(\Gamma)_{\mathcal{O}} h =  \left. J_q h \right|_{\mathcal{O}}, \ \ 
 J_q(\Gamma)_{\Omega_k} h =  \left. J_q h \right|_{\Omega_k}  \]
and so on, as above.  Thus for example the first two identities of the previous theorem can be expressed by 
 $\partial J_q(\Gamma)_\riem = -T(\mathcal{O},\riem) \overline{\partial}$ and $\partial J_q(\Gamma)_{\Omega_j} = \partial - T(\mathcal{O},\Omega_j) \overline{\partial}$.  

%%%%%%%%%%%%%%%%%%%%%%%%%%%%%%%%%%%%%%%%%%%%%

\end{subsection}
\begin{subsection}{Transmission of harmonic Dirichlet-bounded functions} 
\label{se:transmission}
  In this section, we consider certain operators, which take Dirichlet bounded functions on one region to Dirichlet bounded functions on another region sharing a portion of the boundary with the first, in such a way that the functions have the same boundary values.  These operators were studied in \cite{ Schippers_Staubach_general_transmission}; we briefly recall the necessary results.  
 
  We now explain the sense of boundary values, which we call ``conformally non-tangential boundary values'', or CNT boundary values.  All claims made here in the description of these boundary values are proven in \cite{Schippers_Staubach_general_transmission}.  For the purposes of this section we can assume that $\mathcal{O}=\Omega$ is a single simply-connected domain and $\Omega^*$ is the complement of its closure, as above.  
  
  For $p \in \Omega$, let
  \[  \Omega_{p,\epsilon} = \{ w \in \Omega : g_{\Omega}(w,p) < \epsilon \}.       \]
  Similarly for $p' \in \Omega^*$, let 
  \begin{equation} \label{eq:canonical_neighb_riem}
    \Omega^*_{p',\epsilon}= \{w \in \Omega^* : g_{\Omega^*}(w,p') < \epsilon  \}.  
  \end{equation}
  For $\epsilon$ sufficiently small, there is a conformal map $f:\mathbb{A} \rightarrow \Omega^*_{p',\epsilon}$ where $ \mathbb{A} =\{ z: e^{-\epsilon} < |z| < 1\}$, where $f$ extends to a homeomorphism from $\mathbb{S}^1$ onto $\Gamma$.  We can always assume that $f$ has a conformal extension to a neighbourhood of $|z|=e^{-\epsilon}$ so that the image of this curve is analytic.  In the case of $\Omega_{p,\epsilon}$, such a conformal map exists for all $\epsilon \in (0,1)$ and is in fact the restriction of the conformal map from $\disk$ to $\Omega$ taking $0$ to $p$.   In particular, $\Omega^*_{p',\epsilon}$ and $\Omega_{p,\epsilon}$ are doubly-connected domains.  
  
  Given a $u \in \mathcal{D}_{\text{harm}}(\Omega_{p,\epsilon})$ or $\mathcal{D}_{\text{harm}}(\Omega^*_{p',\epsilon})$, the non-tangential boundary values of $u \circ f$ exist except perhaps on a Borel set of logarithmic capacity zero.   The boundary value of $u$ at a point $\zeta \in \Gamma$ is defined to be the boundary value of $u \circ f$ at $f^{-1}(\zeta)$, where it exists. We call these boundary values the CNT boundary values. 
  
  We call the image of a Borel set of logarithmic capacity zero under $f$ a ``null set'' on $\Gamma$.  This definition can be shown to be independent on $p$ or $p'$, and the particular choice of map $f$.  Similarly, the CNT boundary values are independent of these choices.
  An important fact is that if $u_1,u_2 \in \mathcal{D}_{\text{harm}}(\Omega)$ have the same CNT boundary values except possibly on a null set, then they are equal.  Similarly for $u_1,u_2 \in \mathcal{D}_{\text{harm}}(\Omega^*)$.
  
  It is a much more subtle fact that a set on $\Gamma$ is null with respect to the conformal map onto $\Omega_{p,\epsilon}$ if and only if it is null with respect to the conformal map onto $\Omega^*_{p',\epsilon}$, in the case that $\Gamma$ is a quasicircle.  More is true: if $u$ is the CNT boundary values of an element of $\mathcal{D}_{\text{harm}}(\Omega)$ except possibly on a null set, then it is the CNT boundary values of a unique element of $\mathcal{D}_{\text{harm}}(\Omega^*)$ except on a null set, and the converse also holds.  
  
 This allows us to define an operator that we call the transmission operator.  Given $u \in \mathcal{D}_{\text{harm}}(\Omega)$, one may obtain boundary values of $u$ on $\Gamma$ in the CNT sense (see Remark \ref{re:CNT_definition} below).
  There exists a unique  element of $\mathcal{D}_{\text{harm}}(\Omega^*)$ 
  with the same CNT boundary values, 
  which we denote by $\mathfrak{O}(\Omega,\Omega^*) u$.  
  This defines a map 
  \[  \mathfrak{O}(\Omega,\Omega^*):  \mathcal{D}_{\text{harm}}(\Omega) \rightarrow \mathcal{D}_{\text{harm}}(\Omega^*),   \]
  and similarly a map 
  \[  \mathfrak{O}(\Omega^*,\Omega):  \mathcal{D}_{\text{harm}}(\Omega^*) \rightarrow \mathcal{D}_{\text{harm}}(\Omega).   \]
  By definition, these are inverses of each other.  It was shown \cite{Schippers_Staubach_general_transmission} that these are bounded with respect to the Dirichlet seminorm. We call these {\it transmission} operators, since in some sense they transmit a harmonic function through the quasicircle via a boundary value problem.
 
  Similarly, the CNT boundary values of any element of $\mathcal{D}_{\text{harm}}(\Omega_{p,\epsilon})$ are equal to the CNT boundary values of a unique element of $\mathcal{D}_{\text{harm}}(\Omega)$ up to a null set.  The converse is obviously true by simply restricting from $\Omega$ to $\Omega_{p,\epsilon}$; however, one does not obtain a unique element of $\mathcal{D}_{\text{harm}}(\Omega_{p,\epsilon})$.  The same claims are true for $\Omega^*$ and $\Omega^*_{p',\epsilon}$.  
  This allows us to define the following operator:
  \begin{equation} \label{eq:bouncedefinition}
     \mathfrak{G}(\Omega_{p,\epsilon},\Omega):\mathcal{D}_{\text{harm}}(\Omega_{p,\epsilon}) \rightarrow \mathcal{D}_{\text{harm}}(\Omega)   
  \end{equation}
  to take $u$ to the unique element of $\mathcal{D}_{\text{harm}}(\Omega)$ with the same CNT boundary values.  Similarly we define 
  \[   \mathfrak{G}(\Omega^*_{p',\epsilon}, \Omega^*):\mathcal{D}_{\text{harm}}(\Omega^*_{p',\epsilon}) \rightarrow \mathcal{D}_{\text{harm}}(\Omega^*).  \]
  It was shown in \cite{Schippers_Staubach_general_transmission} that these are bounded with respect to the Dirichlet seminorm. We call these {\it bounce} operators.
  
  The integral (\ref{eq:jump_definition}) could equally be defined using level curves in $\Omega^*$.  That is, let $p' \in \Omega^*$ and let $\Gamma^{p'}_{\epsilon}$ denote the level curves of Green's function, but now give them a negative orientation with respect to $\Omega^*$.  If we denote by $J_q(\Gamma,\Omega^*)$ the new operator defined using negatively-oriented level curves in $\Omega^*$, and by $J_q(\Gamma,\Omega)$ the original operator defined using positively-oriented level curves in $\Omega$, then 
  \cite[Theorem 4.10]{Schiffer_comparison}
  \begin{equation} \label{eq:J_both_sides}
      J_q(\Gamma,\Omega^*) \mathfrak{O}(\Omega,\Omega^*) = J_q(\Gamma,\Omega).
  \end{equation}
  The notation $J_q(\Gamma)$ will always refer to $J_q(\Gamma,\Omega)$.  The latter notation is used only when it is necessary to distinguish $J_q(\Gamma,\Omega)$ from $J_q(\Gamma,\Omega^*)$.  
  
  We will also use the following notation for $h \in \mathcal{D}(\Omega_{p,\epsilon})$.
  \begin{equation} \label{eq:Jprime_definition}
    J_q(\Gamma)'h(z) = - \frac{1}{\pi i} \lim_{s \searrow 0} \int_{\Gamma^p_s}
    \partial_w g_R (w;z,q) h(w). 
  \end{equation}
  Although the integral is the same, the prime is included to distinguish it from the operator $J_q(\Gamma)$, which has a different domain. 
  For any one-form in $A(\Omega_{p,\epsilon})$ and $h \in \mathcal{D}_{\text{harm}}(\Omega_{p,\epsilon})$, we have \cite[Theorem 4.8]{Schiffer_comparison}
  \begin{equation}  \label{eq:forms_same_average}
    \lim_{s \searrow 0} \int_{\Gamma^p_{s}} 
     \alpha  h    = \lim_{s \searrow 0} \int_{\Gamma^p_{s}} 
     \alpha\, \mathfrak{G}(\Omega_{p,\epsilon},\Omega) h.    
  \end{equation}
  Similarly \cite[Theorem 4.9]{Schiffer_comparison}
  \begin{equation} \label{eq:J_J'_same}
   J_q(\Gamma)' h = J_q(\Gamma) \mathfrak{G}(\Omega_{p,\epsilon},\Omega) h. 
  \end{equation}

  Finally, we define $\mathcal{H}(\Gamma)$ to be the set of complex-valued functions defined on $\Gamma$, which are the CNT boundary values of an element $h \in \mathcal{D}_{\text{harm}}(\Omega)$, modulo the following equivalence relation.
  We say that $h_1 \sim h_2$ if $h_1$ and $h_2$ are equal except possibly on a null set.  {We will continue to treat equivalence classes as functions in the customary way.}
  
  \begin{remark} \label{re:CNT_definition} 
  Except for Theorem \ref{th:jump_proper}, the precise meaning of ``conformally non-tangential'' is not directly relevant to the paper; the identities (\ref{eq:J_both_sides}), (\ref{eq:forms_same_average}), and (\ref{eq:J_J'_same}) above are logically sufficient to obtain the results here.  Of course, the meaning is helpful for an intuitive understanding of several theorems and proofs.
  \end{remark}  
 
\end{subsection}
\begin{subsection}{Density theorems for the image of $\mathfrak{G}$}

 In this section we prove some preliminary density theorems.
 	
Let $R$ be a compact Riemann surface, and $\mathcal{O}$, $\Omega_k$, and $\Gamma_k$, for $k=1,\ldots,n$ be as above.

Now {recall that}
\[ W =  \left\{ (g_1,\ldots, g_n) \in \oplus_{k=1}^n 
 \mathcal{D}_{\text{harm}}(\Omega_k) : \sum_{k=1}^n \iint_{\Omega_k} \alpha \wedge \overline{\partial} g_k =0 \ \ \forall  \alpha \in A(R) \right\}    \]
where we have made use of the isomorphism (\ref{eq:obvious_isomorphism}) to write $W$ in terms of the restrictions $g_k$ to $\Omega_k$. 
We also denote

\[ W' =  \left\{ (\overline{h}_1,\ldots, \overline{h}_n) \in \oplus_{k=1}^n 
\overline{\mathcal{D}(\Omega_k)} : \sum_{k=1}^n \iint_{\Omega_k} \alpha \wedge \overline{\partial} \overline{h}_k =0 \ \ \forall   \alpha \in A(R) \right\}.    \]
Because $\Omega_{k}$ is simply connected, we may decompose any $g$ as $g = e + \overline{h}$ where $e$ has only holomorphic components and $\overline{h}$ has only anti-holomorphic components.  Thus one may define $W$ equivalently to be the set of elements $g$ of $\oplus_{k=1}^n 
\mathcal{D}_{\text{harm}}(\Omega_k)$ whose anti-holomorphic component is in $W'$.

Note that 
\begin{equation}  \label{eq:jump_condition_doubleintegral}
  (\overline{h}_1,\ldots, \overline{h}_n ) \in W' 
  \Leftrightarrow  \sum_{k=1}^n \lim_{s \searrow 0} \int_{\Gamma_{s}^{p_k}} \overline{h}_k \alpha =0 \ \ \forall
  \alpha \in A(R)     
\end{equation}
by Stokes' theorem.

\begin{lemma}\label{CDJO} 
If $(h_1, \dots, h_n)\in W$ then $J_q(\Gamma)(h_1, \dots, h_n)$ is a holomorphic function on $R\backslash \Gamma$. 
\end{lemma}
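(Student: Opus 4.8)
The plan is to show that $\overline\partial\big[J_q(\Gamma)(h_1,\dots,h_n)\big]$ vanishes identically on $R\backslash\Gamma$; since a function on an open set is holomorphic exactly when its $\overline\partial$ vanishes, this gives the lemma. By the third identity of Theorem \ref{DJO}, on $R\backslash\Gamma$ we have $\overline\partial\big[J_q(\Gamma)(h_1,\dots,h_n)\big] = \overline S(\mathcal O,R)(\overline\partial h_1,\dots,\overline\partial h_n)$, so it suffices to prove $\overline S(\mathcal O,R)(\overline\partial h_1,\dots,\overline\partial h_n)=0$. Using the definition (\ref{eq:conjugate_definition}) of the conjugate operator together with (\ref{eq:SOR_sum}), and writing $\beta_k := \overline{\overline\partial h_k} = \partial\overline{h_k}\in A(\Omega_k)$, this is equivalent to
\[ \sum_{k=1}^n S(\Omega_k,R)\,\beta_k = 0 \quad\text{in } A(R). \]

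To prove this, I would check that the left-hand side is orthogonal in $A(R)$ to every $\gamma\in A(R)$; since it lies in $A(R)$, this forces it to be zero. Here I would use that $K_R$ is the reproducing (Bergman) kernel of $A(R)$ --- equivalently, that $S(\Omega_k,R)$ is a constant multiple of the Hilbert-space adjoint of the restriction map $A(R)\to A(\Omega_k)$, $\gamma\mapsto\gamma|_{\Omega_k}$ --- a fact available from \cite{Schiffer_comparison}. Concretely: expanding $\big(S(\Omega_k,R)\beta_k,\gamma\big)_R$ from the definition of $S(\Omega_k,R)$, applying Fubini, and collapsing the resulting $z$-integral by the reproducing property of $K_R$ (which turns $\iint_{R,z} K_R(z,w)\wedge_z\overline{\gamma(z)}$ into a constant multiple of $\overline{\gamma(w)}$) yields
\[ \Big(\sum_{k=1}^n S(\Omega_k,R)\beta_k,\ \gamma\Big)_R = c\sum_{k=1}^n \iint_{\Omega_k}\beta_k\wedge\overline{\gamma} \]
for a nonzero constant $c$. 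Equivalently, one is asserting $\ker\overline S(\mathcal O,R)=V$.

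Now I would invoke the hypothesis. By definition of $W$, $\sum_k\iint_{\Omega_k}\gamma\wedge\overline\partial h_k = 0$ for all $\gamma\in A(R)$; taking complex conjugates and using $\overline{\beta_k}=\overline\partial h_k$ together with the anticommutativity of the wedge product of two one-forms, this becomes $\sum_k\iint_{\Omega_k}\beta_k\wedge\overline{\gamma}=0$. Hence $\sum_k S(\Omega_k,R)\beta_k\perp A(R)$, so it vanishes, and by the first paragraph $J_q(\Gamma)(h_1,\dots,h_n)$ is holomorphic on $R\backslash\Gamma$. (This is the same, in view of the definition of $W$, as saying $(\overline\partial h_1,\dots,\overline\partial h_n)\in V=\ker\overline S(\mathcal O,R)$.)

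The step I expect to be the main obstacle is the reproducing-kernel computation in the second paragraph. The delicate point is that $\partial_z\overline\partial_w g_R(w;z,q)$ agrees with a smooth, globally bounded kernel only off the diagonal and carries a distributional contribution on $\{w=z\}$; consequently, when $z$ ranges over the domain of integration $\Omega_k$ one must track this contribution through the Fubini step, and it is precisely this that makes the reproducing identity $S(R,R)=\mathrm{id}$ (and hence the adjoint description of $S(\Omega_k,R)$) hold. These facts, and the boundedness of all the operators involved, are quoted from \cite{Schiffer_comparison}, so for the present lemma we may take them as given.
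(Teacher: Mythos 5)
Your proof is correct, and its first step---using the third identity of Theorem \ref{DJO} to reduce the lemma to showing that $\overline{S}(\mathcal{O},R)$ annihilates $(\overline{\partial}h_1,\dots,\overline{\partial}h_n)\in V$---is exactly the paper's. You diverge in how you finish. The paper concludes in one line, pointwise in $z$: for each fixed $z$ the kernel $K_R(z,\cdot)$ is itself an element of $\overline{A(R)}$, so $\sum_k\iint_{\Omega_k}K_R(z,w)\wedge\partial\overline{h}_k(w)$ is precisely the defining condition of $V$ applied to the test form $\overline{K_R(z,\cdot)}\in A(R)$, hence zero. You instead show that $\sum_k S(\Omega_k,R)\beta_k$ is orthogonal to all of $A(R)$ via the adjoint/reproducing-kernel description of $S(\Omega_k,R)$; this is valid but costs a Fubini interchange and the identity $S(R,R)=\mathrm{id}$, neither of which the paper needs here. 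Relatedly, the ``delicate point'' you flag---a distributional contribution of $\partial_z\overline{\partial}_w g_R$ on the diagonal---does not exist: the logarithmic singularity of $g_R$ is killed by the mixed derivative $\partial_z\overline{\partial}_w$, so $K_R$ is globally bounded and smooth (the paper invokes exactly this to get boundedness of $S(\Omega,R)$); it is the Schiffer kernel $L_R=\partial_z\partial_w g_R/(\pi i)$ that is singular on the diagonal. Indeed, on a compact surface $A(R)$ is finite-dimensional and $K_R(z,w)=\sum_j\phi_j(z)\overline{\phi_j(w)}$ for an orthonormal basis $\{\phi_j\}$ of $A(R)$, from which both your reproducing property and the paper's observation that $K_R(z,\cdot)\in\overline{A(R)}$ are immediate. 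So your anticipated main obstacle is a non-issue, and recognizing this collapses your duality argument to the paper's single observation.
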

\begin{proof}
We need to show that $\overline{\partial} [J_q(\Gamma)(h_1, \dots, h_n)](z)=0$ for every $z\in R\backslash \Gamma$. We use Theorem \ref{DJO} to derive the first identity below.
\begin{equation*}
\overline{\partial} [J_q(\Gamma)(h_1, \dots, h_n)](z)=[\overline{S}(\mathcal{O},R) (\overline{\partial}h_1, \dots, \overline{\partial}h_n)](z)\\
=\overline{\sum_{k=1}^n \iint_{\Omega_k, w} K_R(z, w)\wedge \partial{\overline{h}}_k(w)}. 
\end{equation*}
The last integral is zero since $K_R(z,\cdot) \in \overline{A(R)}$ for each fixed $z$ and $(\overline{\partial}h_1, \dots, \overline{\partial}h_n)\in V$.
\end{proof}

For fixed choice of $p_k \in \Omega_k$, $k=1,\ldots,n$, let $\Omega_{k,p_k,\epsilon}$ be the domains bounded by level curves of Green's function as in the previous section.  Define now the spaces
\[ X_{\epsilon} = \left\{ (u_1,\ldots,u_n) \in \oplus_k \mathcal{D}(\Omega_{k,p_k,\epsilon}) \,:\,  \sum_{k=1}^n \int_{\gamma_k}  u_k \alpha = 0  \ \ \forall \alpha \in A(R)  \right\}. \]
Here,  for $k=1,\ldots,n$, $\gamma_k$ is any choice of simple closed analytic curve in $\Omega_{k,\epsilon,p_k}$ which is isotopic to $\Gamma^{p_k}_{\epsilon}$ within the closure of $\Omega_{k,p_k,\epsilon}$.   

Since all theorems hold for any choice of $p_k$, we will remove the points from the notation for the domains.  That is, we will denote $\Omega_{k, p_k,\epsilon}$ by $\Omega_{k,\epsilon}$. 

Recall the definition (\ref{eq:bouncedefinition}) of the bounce operator.  Denote 
\[\oplus_k \mathfrak{G}(\Omega_{k,\epsilon},\Omega_k) (u_1,\ldots,u_n) = 
\left(\mathfrak{G}(\Omega_{1,\epsilon},\Omega_1)u_1,\ldots,\mathfrak{G}(\Omega_{n,\epsilon},\Omega_n)u_n \right).\]
We then have the following theorem.
\begin{theorem}
	Let $\gamma_k$, $k=1, \ldots,n$ be analytic Jordan curves in $\Omega_{k,\epsilon}$ respectively, such that each $\gamma_k$ is isotopic to $\Gamma^{p_k}_{\epsilon}$ within the closure of $\Omega_{k,\epsilon}$.  Given any 
	\[ (u_1,\ldots,u_n) \in \oplus_k\mathcal{D}(\Omega_{k,\epsilon})  \]
	we have that
	\[ \oplus_k \mathfrak{G}(\Omega_{k,\epsilon},\Omega_k) (u_1,\ldots, u_n)
	 \in W \ \  \Leftrightarrow \ \ \sum_{k=1}^n \int_{\gamma_k} u_k \alpha =0   \ \ \forall \alpha \in A(R). \]
	 
	 That is, $(u_1,\ldots,u_n) \in X_\epsilon$ if and only if $\oplus_k \mathfrak{G}(\Omega_{k,\epsilon},\Omega_k) 
	 (u_1,\ldots,u_n) \in W$.    
\end{theorem}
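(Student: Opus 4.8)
The plan is to reduce the statement to a computation involving the period integrals $\int_{\gamma_k} u_k \alpha$ and $\lim_{s\searrow 0}\int_{\Gamma_s^{p_k}} \mathfrak{G}(\Omega_{k,\epsilon},\Omega_k)u_k\,\alpha$, and to show these agree for every $\alpha \in A(R)$. The key observation is that $W$ was defined (via the equivalent characterization through $W'$ given in \eqref{eq:jump_condition_doubleintegral}) by the condition $\sum_k \lim_{s\searrow 0}\int_{\Gamma_s^{p_k}} \overline{h}_k\,\alpha = 0$ for all $\alpha \in A(R)$, where $\overline{h}_k$ is the anti-holomorphic component of $g_k = \mathfrak{G}(\Omega_{k,\epsilon},\Omega_k)u_k$. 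So it suffices to replace the limit of boundary integrals over the shrinking level curves $\Gamma_s^{p_k}$ by the single integral over the fixed analytic curve $\gamma_k$, and to replace $\mathfrak{G}(\Omega_{k,\epsilon},\Omega_k)u_k$ by $u_k$ itself.

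First I would note that since $\alpha \in A(R)$ restricts to a holomorphic one-form on the doubly-connected domain $\Omega_{k,\epsilon}$, and $u_k \in \mathcal{D}(\Omega_{k,\epsilon})$ is holomorphic there, the product $u_k\alpha$ is a holomorphic one-form on $\Omega_{k,\epsilon}$; hence by Stokes' theorem (Cauchy's theorem for the annulus) the integral $\int_{\gamma_k} u_k\alpha$ is independent of the choice of analytic Jordan curve $\gamma_k$ isotopic to $\Gamma_\epsilon^{p_k}$ in $\mathrm{cl}(\Omega_{k,\epsilon})$, and in particular equals $\lim_{s\searrow 0}\int_{\Gamma_s^{p_k}} u_k\alpha$. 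This already handles the curve-replacement. Second, I would invoke \eqref{eq:forms_same_average}, which says precisely that for $\alpha \in A(\Omega_{k,\epsilon})$ (which covers our restricted $\alpha$) and $u_k \in \mathcal{D}_{\mathrm{harm}}(\Omega_{k,\epsilon})$,
\[
\lim_{s\searrow 0}\int_{\Gamma_s^{p_k}} \alpha\, u_k = \lim_{s\searrow 0}\int_{\Gamma_s^{p_k}} \alpha\, \mathfrak{G}(\Omega_{k,\epsilon},\Omega_k)u_k.
\]
Combining these two steps gives $\int_{\gamma_k} u_k\alpha = \lim_{s\searrow 0}\int_{\Gamma_s^{p_k}} g_k\,\alpha$ where $g_k = \mathfrak{G}(\Omega_{k,\epsilon},\Omega_k)u_k \in \mathcal{D}_{\mathrm{harm}}(\Omega_k)$.

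Finally I would assemble: summing over $k$, we get $\sum_k \int_{\gamma_k} u_k\alpha = \sum_k \lim_{s\searrow 0}\int_{\Gamma_s^{p_k}} g_k\,\alpha$ for every $\alpha \in A(R)$. Now, writing $g_k = e_k + \overline{h}_k$ with $e_k$ holomorphic and $\overline{h}_k$ anti-holomorphic on the simply-connected $\Omega_k$, the holomorphic part contributes $\lim_{s\searrow 0}\int_{\Gamma_s^{p_k}} e_k\,\alpha = 0$ since $e_k\alpha$ is holomorphic on $\Omega_k$ and the level curves bound; hence $\lim_{s\searrow 0}\int_{\Gamma_s^{p_k}} g_k\,\alpha = \lim_{s\searrow 0}\int_{\Gamma_s^{p_k}} \overline{h}_k\,\alpha$. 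Therefore the right-hand side is exactly the defining condition for $(g_1,\ldots,g_n) \in W$ via \eqref{eq:jump_condition_doubleintegral} (using that $W$ consists of those $g$ whose anti-holomorphic component lies in $W'$), and the left-hand side vanishing for all $\alpha \in A(R)$ is exactly $(u_1,\ldots,u_n) \in X_\epsilon$. This gives the claimed equivalence.

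The main obstacle I anticipate is bookkeeping rather than conceptual: carefully justifying that the restriction of $\alpha \in A(R)$ genuinely lies in the space $A(\Omega_{k,\epsilon})$ for which \eqref{eq:forms_same_average} was proven (this is immediate since $\Omega_{k,\epsilon}$ is relatively compact in $R$ with analytic inner boundary, so the restriction has finite norm), and making sure the isotopy hypothesis on $\gamma_k$ is used correctly to guarantee the Stokes-theorem deformation argument applies — i.e., that $\gamma_k$ and $\Gamma_s^{p_k}$ cobound an annular region inside $\Omega_{k,\epsilon}$ on which $u_k\alpha$ is holomorphic. One should also double-check orientations: the level curves $\Gamma_s^{p_k}$ are positively oriented with respect to $p_k$, and $\gamma_k$ must inherit the matching orientation under the isotopy, so that no sign is lost in the identification.
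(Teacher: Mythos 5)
Your proposal is correct and follows essentially the same route as the paper, whose proof is the one-line observation that the claim ``follows directly from (\ref{eq:forms_same_average})''; you have simply made explicit the routine steps the paper leaves implicit (the Cauchy-theorem deformation of $\gamma_k$ to the level curves, and the reduction to the anti-holomorphic component via (\ref{eq:jump_condition_doubleintegral})), all of which check out.
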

\begin{proof}
 This follows directly from (\ref{eq:forms_same_average}).
\end{proof}

We have the following two theorems.
\begin{theorem} \label{th:extension_collar}
 For any $(u_1,\ldots,u_n) \in X_{\epsilon}$,  the restriction of 
 \[  \sum_{k=1}^n J_q(\Gamma_k)' u_k \]
 to $\riem$  
 extends to a function which is holomorphic on $\mathrm{cl}\, \riem \cup \Omega_{1,\epsilon} \cup \cdots \cup \Omega_{n,\epsilon}$.  
\end{theorem}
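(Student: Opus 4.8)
The plan is to combine Lemma~\ref{CDJO}, which handles holomorphicity on $\riem$ itself, with a contour-deformation argument that produces a harmonic extension across each quasicircle $\Gamma_k$ into the collar $\Omega_{k,\epsilon}$, and then to upgrade that extension to a holomorphic one. First I would reduce the sum to the jump operator on $\mathcal{O}$. Setting $h_k=\mathfrak{G}(\Omega_{k,\epsilon},\Omega_k)u_k$, the identity~\eqref{eq:J_J'_same} gives $J_q(\Gamma_k)'u_k=J_q(\Gamma_k)h_k$ for each $k$, so by~\eqref{JE} we have $\sum_{k=1}^n J_q(\Gamma_k)'u_k=J_q(\Gamma)(h_1,\dots,h_n)$. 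Since $(u_1,\dots,u_n)\in X_\epsilon$, the preceding theorem shows $(h_1,\dots,h_n)\in W$, whence by Lemma~\ref{CDJO} the function $\sum_{k=1}^n J_q(\Gamma_k)'u_k$ is holomorphic on $R\setminus\Gamma$; in particular its restriction to $\riem$ is holomorphic.

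Next I would produce the extension one term at a time. Fix $k$. For $z\in\riem$, both $z$ and $q$ lie outside $\mathrm{cl}\,\Omega_k$, so $\partial_w g_R(w;z,q)$ is holomorphic in $w$ on $\Omega_{k,\epsilon}$; as $u_k$ is holomorphic there as well, $\partial_w g_R(w;z,q)\,u_k(w)$ is a closed holomorphic one-form in $w$ on $\Omega_{k,\epsilon}$. For $0<s<s_0<\epsilon$ the level curves $\Gamma^{p_k}_s$ and $\Gamma^{p_k}_{s_0}$ cobound an annulus compactly contained in $\Omega_{k,\epsilon}$, so by Stokes' theorem $\int_{\Gamma^{p_k}_s}\partial_w g_R(w;z,q)\,u_k(w)$ is independent of $s\in(0,\epsilon)$. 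Hence, by the definition~\eqref{eq:Jprime_definition}, for any fixed $s_0\in(0,\epsilon)$
\[
  J_q(\Gamma_k)'u_k(z)=-\frac{1}{\pi i}\int_{\Gamma^{p_k}_{s_0}}\partial_w g_R(w;z,q)\,u_k(w),\qquad z\in\riem .
\]
The right-hand side is defined and harmonic in $z$ on all of $R\setminus\Gamma^{p_k}_{s_0}$, since $\partial_w g_R(w;z,q)$ is harmonic in $z$ on $R\setminus\{w\}$. Now $\Gamma^{p_k}_{s_0}$ separates $R$ into the disk $\Omega_k\setminus\Omega_{k,s_0}$ containing $p_k$ and its complement $R\setminus(\Omega_k\setminus\Omega_{k,s_0})$, which is connected and contains $\riem\cup\Omega_{k,s_0}$; on that complement the displayed right-hand side is a harmonic function agreeing with $J_q(\Gamma_k)'u_k$ on $\riem$. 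Letting $s_0\uparrow\epsilon$, these harmonic functions agree on overlaps by unique continuation of real-analytic functions on connected open sets, and so patch together to a harmonic extension of $J_q(\Gamma_k)'u_k|_{\riem}$ to $R\setminus(\Omega_k\setminus\Omega_{k,\epsilon})\supseteq\mathrm{cl}\,\riem\cup\Omega_{k,\epsilon}$.

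Summing over $k$, the restriction of $\sum_{k=1}^n J_q(\Gamma_k)'u_k$ to $\riem$ extends to a harmonic function $\widetilde{F}$ on $\bigcap_{k=1}^n\bigl(R\setminus(\Omega_k\setminus\Omega_{k,\epsilon})\bigr)=\mathrm{cl}\,\riem\cup\Omega_{1,\epsilon}\cup\cdots\cup\Omega_{n,\epsilon}$, which is connected (each annulus $\Omega_{k,\epsilon}$ meets the connected set $\mathrm{cl}\,\riem$ along $\Gamma_k$). By the first paragraph, $\widetilde{F}$ coincides on the nonempty open subset $\riem$ with a holomorphic function, so $\overline{\partial}\widetilde{F}$ vanishes on $\riem$; since $\widetilde{F}$ is harmonic, $\overline{\overline{\partial}\widetilde{F}}$ is a holomorphic one-form on the connected set $\mathrm{cl}\,\riem\cup\Omega_{1,\epsilon}\cup\cdots\cup\Omega_{n,\epsilon}$ vanishing on an open subset, hence identically zero. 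Therefore $\widetilde{F}$ is holomorphic on all of $\mathrm{cl}\,\riem\cup\Omega_{1,\epsilon}\cup\cdots\cup\Omega_{n,\epsilon}$, as claimed. The one delicate point is that the contour-deformation step by itself yields only a \emph{harmonic} extension, because $\partial_w g_R(w;z,q)$ fails to be holomorphic in $z$ once $R$ has positive genus; the hypothesis $(u_1,\dots,u_n)\in X_\epsilon$ is genuinely needed — via Lemma~\ref{CDJO} — precisely to recover holomorphicity of the extension.
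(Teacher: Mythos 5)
Your argument is correct, and its skeleton is the same as the paper's: by Cauchy's theorem the limiting integral \eqref{eq:Jprime_definition} equals the integral over any fixed interior level curve $\Gamma^{p_k}_{s_0}$, the latter makes sense for all $z$ off that curve, and letting $s_0 \uparrow \epsilon$ exhausts $\mathrm{cl}\,\riem \cup \Omega_{1,\epsilon} \cup \cdots \cup \Omega_{n,\epsilon}$. Where you genuinely diverge is in how holomorphicity (as opposed to mere harmonicity) of the extension is obtained. The paper cites Royden's Proposition 6 \cite{Royden}, which asserts directly that the Cauchy-type integral over the analytic curves $\Gamma_{k,r}$ defines a \emph{holomorphic} function on the region $B_r$ they bound; the period conditions defining $X_\epsilon$ enter there. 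You instead observe that the fixed-curve integral is only manifestly harmonic in $z$, get holomorphicity on $\riem$ from Lemma~\ref{CDJO} via the identification $\sum_k J_q(\Gamma_k)'u_k = J_q(\Gamma)\bigl(\mathfrak{G}(\Omega_{1,\epsilon},\Omega_1)u_1,\ldots,\mathfrak{G}(\Omega_{n,\epsilon},\Omega_n)u_n\bigr)$ coming from \eqref{eq:J_J'_same} and the $X_\epsilon \leftrightarrow W$ equivalence, and then propagate holomorphicity to the whole connected extension domain by applying the identity theorem to the anti-holomorphic one-form $\overline{\partial}\widetilde{F}$. This is a legitimate, self-contained substitute for the appeal to Royden's proposition; it buys independence from that external reference at the cost of routing the proof through Lemma~\ref{CDJO} and the bounce-operator identities, which the paper's version does not need. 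Both routes correctly identify the period condition as the one and only source of holomorphicity, and your closing remark makes that dependence explicit, which is a useful clarification the paper leaves implicit.
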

\begin{proof}  Choose $0< r < \epsilon$.  Let $\Gamma_{k,r}$ denote the boundary of $\Omega_{k, r}$.  Let $B_r$ be the region containing $\riem$ and bounded by $\cup_{k=1}^n \Gamma_{k,r}$.
 Applying Royden \cite[Proposition 6]{Royden}, together with the explicit integral formula given there, we see that 
 \[  - \sum_{k=1}^n \frac{1}{\pi i} \int_{\Gamma_{k, r}} \partial_w g(w;z,q) u_k(w)   \]
 defines a holomorphic function on $B_r$ .
 But this integral is independent of $r$ and thus equals the limiting integral (\ref{eq:Jprime_definition}).  Since every point in $\mathrm{cl}\, \riem \cup \Omega_{1,\epsilon} \cup \cdots \cup \Omega_{n,\epsilon}$ is contained in some $B_r$ this proves the theorem.
\end{proof}

\begin{corollary}  \label{co:holo_extension}
	For any $(u_1,\ldots,u_n) \in X_\epsilon$, the restriction of  
	\[  \sum_{k=1}^n J_q(\Gamma_k)  \mathfrak{G}(\Omega_{k,\epsilon},\Omega_k) u_k    \]
	to $\riem$ has a holomorphic extension to $\mathrm{cl} \, \riem \cup \Omega_{1,\epsilon} \cup \cdots \cup \Omega_{n,\epsilon}$. 
\end{corollary}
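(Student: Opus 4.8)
The plan is to deduce this immediately from Theorem \ref{th:extension_collar} by trading the operators $J_q(\Gamma_k)\mathfrak{G}(\Omega_{k,\epsilon},\Omega_k)$ for the primed operators $J_q(\Gamma_k)'$ via the identity \eqref{eq:J_J'_same}. First I would observe that since $(u_1,\ldots,u_n) \in X_\epsilon \subset \oplus_k \mathcal{D}(\Omega_{k,\epsilon})$, each component $u_k$ lies in $\mathcal{D}(\Omega_{k,\epsilon}) \subset \mathcal{D}_{\text{harm}}(\Omega_{k,\epsilon})$, so \eqref{eq:J_J'_same} applies with $\Gamma$, $\Omega$, $\Omega_{p,\epsilon}$ replaced by $\Gamma_k$, $\Omega_k$, $\Omega_{k,\epsilon}$ respectively. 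This gives, for each $k=1,\ldots,n$,
\[ J_q(\Gamma_k)' u_k = J_q(\Gamma_k)\,\mathfrak{G}(\Omega_{k,\epsilon},\Omega_k) u_k \]
as functions on $R \backslash \Gamma_k$, and in particular after restriction to $\riem$, which is disjoint from every $\Gamma_k$ since $\Gamma_k \subset \mathrm{cl}\,\Omega_k \subset \mathrm{cl}\,\mathcal{O}$.

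Next I would sum these identities over $k$. By the definition \eqref{JE} the multi-curve operator is the sum of the single-curve operators, and the bounce operator acts componentwise on $\oplus_k \mathcal{D}(\Omega_{k,\epsilon})$, so on $\riem$ we obtain
\[ \sum_{k=1}^n J_q(\Gamma_k)\,\mathfrak{G}(\Omega_{k,\epsilon},\Omega_k) u_k \;=\; \sum_{k=1}^n J_q(\Gamma_k)' u_k. \]
Theorem \ref{th:extension_collar} states precisely that the restriction of the right-hand side to $\riem$ extends to a function holomorphic on $\mathrm{cl}\,\riem \cup \Omega_{1,\epsilon} \cup \cdots \cup \Omega_{n,\epsilon}$; hence so does the restriction of the left-hand side, which is the assertion of the corollary.

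I do not expect any real obstacle: all the substance is already packaged in Theorem \ref{th:extension_collar} (which uses Royden's explicit integral formula and the $r$-independence of the level-curve integral) and in the transmission identity \eqref{eq:J_J'_same}. The only point needing a word of care is that \eqref{eq:J_J'_same} is stated for a single quasicircle, so it must be applied termwise and then added; this is legitimate for the reasons just noted about \eqref{JE} and the componentwise action of $\mathfrak{G}$.
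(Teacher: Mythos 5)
Your argument is correct and is exactly the paper's proof: the corollary is deduced from Theorem \ref{th:extension_collar} by applying the identity (\ref{eq:J_J'_same}) termwise to replace each $J_q(\Gamma_k)\,\mathfrak{G}(\Omega_{k,\epsilon},\Omega_k)u_k$ with $J_q(\Gamma_k)'u_k$ and summing. You have merely spelled out the details that the paper leaves implicit.
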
	
\begin{proof}
	This follows immediately from 
	 Theorem \ref{th:extension_collar} and (\ref{eq:J_J'_same}).  
\end{proof}
\begin{remark} \label{re:ignore_holomorphic}
    Fix $q \in \riem$.
	Observe that if $(g_1,\ldots,g_n)$ and $(\hat{g}_1,\ldots,\hat{g}_n)$ are in $W$, and $g_k - \hat{g}_k$ are holomorphic in $\Omega_k$ for all $k$, then
	\[ \left.  \sum_{k=1}^n J_q(\Gamma_k) g_k \right|_{\riem} = \left. \sum_{k=1}^n J_q(\Gamma_k) \hat{g}_k  \right|_{\riem}.  \]
	
To see this, by (\ref{eq:derivative_J_identities}) we have that 
	\[  \partial J_q(\Gamma_k) (g_k - \hat{g}_k)
	 = - T(\Omega_k,\riem)  \overline{\partial} (g_k - \hat{g}_k) =0 \]
	 on $\riem$ for all $k$.  Thus $J_q(\Gamma_k) (g_k - \hat{g}_k)$ is constant
	 on $\riem$.  Since $J_q(\Gamma_k) g_k$ and $J_q(\Gamma_k) \hat{g}_k$ both vanish at $q$, this proves the claim.  If $q \notin \riem$, then the claim is true up to a constant.  \end{remark}
%%%%%%%%%%%%%%%%%%%%%%%%%%%%%%%%%%%%%%%%%%%%%%%\
\begin{theorem}  \label{th:average_Xr_dense}
 The set $\oplus_k \mathfrak{G}(\Omega_{k,\epsilon}, \Omega_k) X_\epsilon$ is dense in $W$.  
\end{theorem}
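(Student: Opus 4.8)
The plan is to deduce the statement from an elementary density fact on the disk together with a soft functional‑analytic principle; throughout, all spaces carry the Dirichlet seminorm. \emph{Step 1 (unconstrained density).} First I would show that
\[
Y:=\bigoplus_{k=1}^{n}\mathfrak{G}(\Omega_{k,\epsilon},\Omega_k)\,\mathcal{D}(\Omega_{k,\epsilon})
\]
is dense in $\bigoplus_{k=1}^{n}\mathcal{D}_{\text{harm}}(\Omega_k)$. Since this is a finite direct sum it suffices to treat a single factor. Because $\Gamma_k$ is a quasicircle, the Riemann map $f_k\colon\disk\to\Omega_k$ with $f_k(0)=p_k$ extends to a homeomorphism of closures, carries the annulus $\mathbb{A}=\{e^{-\epsilon}<|z|<1\}$ onto $\Omega_{k,\epsilon}$, and intertwines $\mathfrak{G}(\Omega_{k,\epsilon},\Omega_k)$ with $\mathfrak{G}(\mathbb{A},\disk)$ (conformal invariance of the Dirichlet seminorm, together with the fact that CNT boundary values on $\Gamma_k$ are precisely nontangential boundary values after precomposition with $f_k$). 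So it is enough to prove that $\mathfrak{G}(\mathbb{A},\disk)\,\mathcal{D}(\mathbb{A})$ is dense in $\mathcal{D}_{\text{harm}}(\disk)$. For this, given $u=\sum_{n\geq 0}b_n z^{n}+\sum_{m>0}c_m\bar z^{m}\in\mathcal{D}_{\text{harm}}(\disk)$, I would use that the truncations $u_N=\sum_{n\geq 0}b_n z^{n}+\sum_{0<m\le N}c_m\bar z^{m}$ converge to $u$ in the Dirichlet seminorm, and that $u_N=\mathfrak{G}(\mathbb{A},\disk)\bigl(\sum_{n\geq 0}b_n z^{n}+\sum_{0<m\le N}c_m z^{-m}\bigr)$, the preimage being a Laurent function lying in $\mathcal{D}(\mathbb{A})$ since it has only finitely many negative powers. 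As $\mathfrak{G}(\mathbb{A},\disk)\,\mathcal{D}(\mathbb{A})$ is a linear subspace, this yields Step 1.

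\emph{Step 2 (recognizing the set).} By the theorem immediately preceding Theorem~\ref{th:extension_collar}, an element $(u_1,\dots,u_n)\in\bigoplus_k\mathcal{D}(\Omega_{k,\epsilon})$ satisfies $\oplus_k\mathfrak{G}(\Omega_{k,\epsilon},\Omega_k)(u_1,\dots,u_n)\in W$ if and only if $(u_1,\dots,u_n)\in X_\epsilon$. Hence
\[
\bigoplus_k\mathfrak{G}(\Omega_{k,\epsilon},\Omega_k)\,X_\epsilon \;=\; Y\cap W .
\]
Moreover $W=\ker\Phi$, where $\Phi\colon\bigoplus_k\mathcal{D}_{\text{harm}}(\Omega_k)\to A(R)^{*}$ sends $g=(g_1,\dots,g_n)$ to the functional $\alpha\mapsto\sum_k\iint_{\Omega_k}\alpha\wedge\overline{\partial}g_k$; by Cauchy--Schwarz $\Phi$ is bounded, and since $A(R)$ is finite dimensional, $W$ is a closed subspace of finite codimension.

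\emph{Step 3 (soft argument).} Finally I would invoke the general principle: if $Y$ is a dense linear subspace of a seminormed space $H$ and $\Phi\colon H\to F$ is a bounded linear map into a finite‑dimensional space, then $Y\cap\ker\Phi$ is dense in $\ker\Phi$. Indeed $\Phi(Y)$ is a linear subspace of $F$ dense in $\Phi(H)$, hence equal to $\Phi(H)$; choosing $y^{(1)},\dots,y^{(m)}\in Y$ whose $\Phi$‑images form a basis of $\Phi(H)$, and for $w\in\ker\Phi$ a sequence $y_j\to w$ in $Y$, the coordinates of $\Phi(y_j)$ in that basis tend to $0$, so subtracting the corresponding combination of the $y^{(i)}$ produces a sequence in $Y\cap\ker\Phi$ still converging to $w$. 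Applying this with $H=\bigoplus_k\mathcal{D}_{\text{harm}}(\Omega_k)$, the $Y$ of Step 1, and the $\Phi$ of Step 2 shows that $Y\cap W=\bigoplus_k\mathfrak{G}(\Omega_{k,\epsilon},\Omega_k)X_\epsilon$ is dense in $W$.

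I expect the only delicate point to be Step 1: pinning down the compatibility of $\mathfrak{G}$ with the uniformizing maps $f_k$, identifying $\mathfrak{G}(\mathbb{A},\disk)$ with the Poisson‑type extension of CNT boundary data in the model case, and verifying that the truncated Laurent preimages genuinely lie in $\mathcal{D}(\mathbb{A})$ — the negative powers make the annular Dirichlet integral grow like $e^{2m\epsilon}$ in the $m$‑th mode, which is exactly why one truncates rather than trying to hit $u$ on the nose. Steps 2 and 3 are essentially bookkeeping once Theorem~\ref{th:extension_collar}'s predecessor and the finite dimensionality of $A(R)$ are in hand.
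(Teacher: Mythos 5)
Your proposal is correct, and its skeleton --- approximate without constraints, then correct for the finitely many linear conditions coming from $A(R)$ --- is the same as the paper's; but the two steps are executed quite differently. For the unconstrained density of $\oplus_k \mathfrak{G}(\Omega_{k,\epsilon},\Omega_k)\,\mathcal{D}(\Omega_{k,\epsilon})$ in $\oplus_k\mathcal{D}_{\text{harm}}(\Omega_k)$, the paper simply cites Theorem 4.6 of \cite{Schiffer_comparison}, whereas you re-derive it by conformal transport to the model pair $(\mathbb{A},\disk)$ and truncation of the anti-holomorphic Fourier tail; your argument is essentially the standard proof of that cited result, and the intertwining of $\mathfrak{G}$ with the uniformizer is exactly how the paper defines CNT boundary values, so this is sound (though you are reproving an imported lemma rather than shortening the paper's argument). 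For the correction step, the paper introduces the orthogonal projection $\mathcal{P}$ onto $X_\epsilon$, the period maps $Q$ and $Q_1$, and the quantitative estimate $\|\mathcal{P}u-u\|\le C\|Qu\|$ together with $Q=Q_1\circ\oplus_k\mathfrak{G}$, then runs an explicit $\epsilon$-chase; you instead package the same phenomenon as the soft lemma that a dense subspace of a seminormed space meets the kernel of a bounded finite-rank map in a dense subspace of that kernel, using the identification $\oplus_k\mathfrak{G}X_\epsilon = Y\cap W$ from the unnumbered theorem preceding Theorem \ref{th:extension_collar}. Your basis-subtraction argument is correct (note that $\Phi$ kills constants, so boundedness in the Dirichlet seminorm is not an issue), and it is arguably cleaner and more reusable; what the paper's version buys is an explicit constant $CD\|\oplus_k\mathfrak{G}\|+1$ controlling how much the constrained approximation degrades the unconstrained one. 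One small point to make explicit in Step 2: the equality $\oplus_k\mathfrak{G}X_\epsilon=Y\cap W$ uses both directions of that preceding theorem (membership of $\mathfrak{G}u$ in $W$ forces $u\in X_\epsilon$), which you do invoke correctly.
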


\begin{proof}
 The proof follows the structure of that of Theorem 4.16 in \cite{Schiffer_comparison}, but generalizes it to the case of several boundary curves.

Define $\mathcal{P} : \bigoplus_{k=1}^n \mathcal{D}(\Omega_{k,\epsilon}) \rightarrow X_\epsilon$ the orthogonal projection to the subspace $X_\epsilon$.
Fix a basis $\{\alpha_1, \dots, \alpha_g\}$  for the vector space $A(R)$, where $g$ is the genus of $R$. Define the operator $Q$ by
\begin{align*}
Q: \bigoplus_{k=1}^n \mathcal{D}(\Omega_{k, \epsilon}) &\rightarrow \mathbb{C}^g\\
(u_1, \dots, u_n)&\rightarrow  \left( \sum_{k=1}^n \int_{\Gamma_k^{\prime}} u_k \alpha_1, \dots,  \sum_{k=1}^n \int_{\Gamma_k^{\prime}} u_k \alpha_g \right)
\end{align*}
For any constants $c_k \in \mathbb{C}$, $k=1,\ldots,n$, $Q(u_1 + c_1,\ldots,u_n+c_n) = Q(u_1,\ldots,u_n)$.  Furthermore, for any fixed $k$ and fixed $x_k \in \Gamma_k'$ there is a uniform $C$ such that 
\[  \sup_{z \in \Gamma_k'} |u_k(z) - u_k(x_k) | \leq C \| u_k  - u_k(x_k) \|_{\mathcal{D}(\Omega_{k,\epsilon})}   \]
These two facts together imply that $Q$ is bounded.  

By using the Riesz representation theorem and the Gram-Schmidt process 
\begin{equation*}
\exists C>0 \hspace{.5cm} s.t.  \hspace{.5cm} \|\mathcal{P}(u_1, \dots, u_n)-(u_1, \dots, u_n)\|_{\bigoplus_{k=1}^n \mathcal{D}(\Omega_{k, \epsilon})}\leq C\|Q(u_1, \dots, u_n)\|_{\mathbb{C}^{g}}.
\end{equation*}

Now define $Q_1$ by 
\begin{equation*}
\begin{split}
Q_1: \bigoplus_{k=1}^n \mathcal{D}_{\mathrm{harm}}(\Omega_k) &\rightarrow \mathbb{C}^g\\
(h_1, \dots, h_n)&\rightarrow  \left( \lim_{s\searrow 0^+}\sum_{k=1}^n \int_{\Gamma_s^{p_k}} h_k \alpha_1, \dots,  \lim_{s\searrow 0^+}\sum_{k=1}^n \int_{\Gamma_s^{p_k}} h_k \alpha_g \right).
\end{split}
\end{equation*}
By the definition of $W$, $Q_1(h_1, \dots, h_n)=0$ if $(h_1, \dots, h_n)\in W$. By Stokes' theorem and continuity of the Dirichlet inner product, there exists $D>0$ such that 

\begin{equation*}
\|Q_1(h_1, \dots, h_n)\|_{\mathbb{C}^{g}}\leq D\|(h_1, \dots, h_n)\|_{ \bigoplus_{k=1}^n \mathcal{D}_{\mathrm{harm}}(\Omega_k)}.
\end{equation*}
By (\ref{eq:forms_same_average}) 
\begin{equation*}
Q(u_1, \dots, u_n)=Q_1(\mathfrak{G}(\Omega_{1, \epsilon}, \Omega_1)u_1, \dots, \mathfrak{G}(\Omega_{n, \epsilon}, \Omega_n)u_n)= Q_1(\oplus_k \mathfrak{G}(\Omega_{k,\epsilon}, \Omega_k)(u_1, \dots, u_n)).
\end{equation*}

Now let $(h_1, \dots, h_n)\in W$ and $\epsilon>0$, then by density of $\mathfrak{G}(\Omega_{k,\epsilon}, \Omega_k)\mathcal{D}(\Omega_{k, \epsilon})$ in $\mathcal{D}_{\mathrm{harm}}(\Omega_k)$ \cite[Theorem 4.6]{Schiffer_comparison}, for each $k=1, \ldots ,n$ there exists $u_k\in \mathcal{D}(\Omega_{k, \epsilon})$ such that 

\begin{equation*}
\|\mathfrak{G}(\Omega_{k, \epsilon}, \Omega_k)u_k-h_k\|_{\mathcal{D}_{\mathrm{harm}}(\Omega_{k})}\leq \frac{\epsilon}{\sqrt{n}},
\end{equation*}

Therefore by the Minkowski inequality we have
\begin{equation}\label{EPE}%Extension of Projection Estimate
\begin{split}
&\|\oplus_k \mathfrak{G}(\Omega_{k,\epsilon}, \Omega_k) \mathcal{P}(u_1, \dots, u_n)-(h_1, \dots, h_n)\|\\
 &\hspace{10pt}\leq \|\oplus_k \mathfrak{G}(\Omega_{k,\epsilon}, \Omega_k)\mathcal{P}(u_1, \dots, u_n)-\oplus_k \mathfrak{G}(\Omega_{k,\epsilon}, \Omega_k)(u_1, \dots, u_n)\|\\
 &\hspace{17pt}+\|\oplus_k \mathfrak{G}(\Omega_{k,\epsilon}, \Omega_k)(u_1, \dots, u_n)-(h_1, \dots, h_n)\| \\
 &\hspace{10pt}\leq\|\oplus_k \mathfrak{G}(\Omega_{k,\epsilon}, \Omega_k)\| \|\mathcal{P}(u_1, \dots, u_n)-(u_1, \dots, u_n)\|\\
&\hspace{17pt}+\|\big(\mathfrak{G}(\Omega_{1, \epsilon}, \Omega_1)u_1-h_1, \dots, \mathfrak{G}(\Omega_{n, \epsilon}, \Omega_n)u_n-h_n\big)\|
 \end{split}
\end{equation}
where all the norms are $\|.\|_{\bigoplus_{k=1}^n \mathcal{D}_{harm}(\Omega_k)}$ except the operator norm $\|\oplus_k \mathfrak{G}(\Omega_{k,\epsilon}, \Omega_k)\|$. Since $Q_1(h_1, \dots, h_n)=0$ one has

\begin{align*}
\|\mathcal{P}(u_1, \dots, u_n)-(u_1, \dots, u_n)\|&\leq C\|Q(u_1, \dots, u_n)\|_{\mathbb{C}^g}=C\|Q_1\oplus_k \mathfrak{G}(\Omega_{k,\epsilon}, \Omega_k)(u_1, \dots, u_n)\|_{\mathbb{C}^g}\\
&=C\|Q_1\big(\oplus_k \mathfrak{G}(\Omega_{k,\epsilon}, \Omega_k)(u_1, \dots, u_n)-(h_1, \dots, h_n)\big)\|_{\mathbb{C}^g}\\
&\leq CD\|\oplus_k \mathfrak{G}(\Omega_{k,\epsilon}, \Omega_k)(u_1, \dots, u_n)-(h_1, \dots, h_n)\|.
\end{align*}

By our choice of $u_k$'s for the second term we have 

\begin{align*}
\|\big(\mathfrak{G}(\Omega_{1, \epsilon}, \Omega_1)u_1-h_1, \dots, \mathfrak{G}(\Omega_{n, \epsilon}, \Omega_n)u_n-h_n\big)\|&=\left(\sum_{k=1}^n \|\mathfrak{G}(\Omega_{k, \epsilon}, \Omega_k)u_k-h_k\|^2_{\mathcal{D}_{\mathrm{harm}}(\Omega_k)}\right)^{\frac{1}{2}}\\
&\leq\left(\sum_{k=1}^n \frac{\epsilon^2}{n}\right)^{\frac{1}{2}}=\epsilon.
\end{align*}

Combining the above two inequalities with (\ref{EPE}) 
\begin{align*}
&\|\oplus_k \mathfrak{G}(\Omega_{k,\epsilon}, \Omega_k)\mathcal{P}(u_1, \dots, u_n)-(h_1, \dots, h_n)\|\\
&\leq CD\|\oplus_k \mathfrak{G}(\Omega_{k,\epsilon}, \Omega_k)\|\,\|\oplus_k \mathfrak{G}(\Omega_{k,\epsilon}, \Omega_k)(u_1, \dots, u_n)-(h_1, \dots, h_n)\|+\epsilon \\
&\leq CD\|\oplus_k \mathfrak{G}(\Omega_{k,\epsilon}, \Omega_k)\| \epsilon+\epsilon=(CD\|\oplus_k \mathfrak{G}(\Omega_{k,\epsilon}, \Omega_k)\|+1)\epsilon.\\
\end{align*}

Therefore  $\oplus_k \mathfrak{G}(\Omega_{k,\epsilon}, \Omega_k)X_\epsilon$ is dense in $W$. 

\end{proof}
%%%%%%%%%%%%%%%%%%%%%%%%%%%%%%%%%%%%%%%%%%%%%%%

\end{subsection}
\begin{subsection}{Proof of Theorem \ref{th:general_T_is_isomorphism}}
  The proof proceeds in several steps.  

 We will show that the Schiffer operator is a bounded isomorphism on the subspace $V$ of $\bigoplus_{k=1}^n \overline{A(\Omega_k)}$. Recall that $V$ is given by 
\begin{equation*}
V=\Big\{ (\overline{\alpha}_1, \dots, \overline{\alpha}_n)\in \bigoplus_{k=1}^n \overline{A(\Omega_k)} : \sum_{k=1}^n \iint_{\Omega_k} \beta \wedge \overline{\alpha}_k=0~;~ \forall {\beta} \in {A(R)} \Big\}
\end{equation*}
where we have made use of the isomorphism (\ref{eq:obvious_isomorphism}) to rewrite $V$ in terms of the restrictions to $\Omega_k$.  We record the following obvious fact.
\begin{lemma}\label{VBC}%V and B space Correspondence 
The operator 
\begin{align*}
\overline{\mathbf{\partial}} : W' &\rightarrow V  \\
(\overline{h}_1, \dots, \overline{h}_n)&\rightarrow (\overline{\partial h}_1, \dots, \overline{\partial h}_n),
\end{align*}
is a surjective operator which preserves the norm.
\end{lemma}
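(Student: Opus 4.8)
The statement is essentially a bookkeeping exercise: the only two things to check are that $\overline{\partial}$ is seminorm-preserving and that it is surjective onto $V$, with the second being the only assertion carrying any content. First I would observe that the map is well defined into $V$: if $(\overline{h}_1,\ldots,\overline{h}_n)\in W'$, then by definition of $\overline{\mathcal{D}(\Omega_k)}$ each $\overline{\partial}\,\overline{h}_k$ lies in $\overline{A(\Omega_k)}$, and the linear constraint $\sum_k \iint_{\Omega_k}\beta\wedge\overline{\partial}\,\overline{h}_k=0$ for all $\beta\in A(R)$ appearing in the definition of $W'$ is literally the constraint placing $(\overline{\partial}\,\overline{h}_1,\ldots,\overline{\partial}\,\overline{h}_n)$ in $V$ (under the identification (\ref{eq:obvious_isomorphism})).

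For norm preservation: each $\overline{h}_k$ is anti-holomorphic, so $d\overline{h}_k=\overline{\partial}\,\overline{h}_k$ and the Dirichlet seminorm of $\overline{h}_k$ is, by the definition (\ref{eq:form_inner_product}) of the pairing, equal to $\|\overline{\partial}\,\overline{h}_k\|_{\overline{A(\Omega_k)}}$. Summing over $k$ shows that the norm of $(\overline{h}_1,\ldots,\overline{h}_n)$ in $\bigoplus_k\overline{\mathcal{D}(\Omega_k)}$ coincides with the norm of its image in $\bigoplus_k\overline{A(\Omega_k)}$; in particular $\overline{\partial}$ is isometric for the Dirichlet seminorm, its kernel consisting precisely of tuples of constants.

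For surjectivity: fix $(\overline{\alpha}_1,\ldots,\overline{\alpha}_n)\in V$ and let $\alpha_k\in A(\Omega_k)$ be the holomorphic form conjugate to $\overline{\alpha}_k$. Since each $\Omega_k$ is simply connected, $\alpha_k$ is exact; choosing $p_k\in\Omega_k$ and setting $h_k(z)=\int_{p_k}^z\alpha_k$ gives a well-defined holomorphic function on $\Omega_k$ with $\partial h_k=\alpha_k\in A(\Omega_k)$, hence $h_k\in\mathcal{D}(\Omega_k)$ and $\overline{h}_k\in\overline{\mathcal{D}(\Omega_k)}$ with $\overline{\partial}\,\overline{h}_k=\overline{\alpha}_k$. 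Because $\overline{\partial}\,\overline{h}_k=\overline{\alpha}_k$ for every $k$, the condition defining membership of $(\overline{\alpha}_1,\ldots,\overline{\alpha}_n)$ in $V$ becomes exactly the condition defining membership of $(\overline{h}_1,\ldots,\overline{h}_n)$ in $W'$, so $(\overline{h}_1,\ldots,\overline{h}_n)\in W'$ and it maps to the prescribed element of $V$. There is no genuine obstacle here; the only step needing even a remark is that the antiderivative $h_k$ lies in $\mathcal{D}(\Omega_k)$, which is immediate from $\partial h_k=\alpha_k$ and $\alpha_k\in A(\Omega_k)$, together with the fact that simple connectivity removes any period obstruction to antidifferentiation.
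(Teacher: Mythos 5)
Your proof is correct; the paper records this lemma as an ``obvious fact'' without proof, and your argument supplies exactly the intended details: the defining constraint of $W'$ is literally the defining constraint of $V$ applied to the image, the Dirichlet seminorm of an anti-holomorphic function is by definition the Bergman norm of its $\overline{\partial}$-derivative, and surjectivity follows by antidifferentiating on each simply connected $\Omega_k$. Nothing further is needed.
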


For fixed $j=1,\ldots,n$ we will define the transmission operator $\mathfrak{O}(\riem, \Omega_j)$ from $\riem$ to $\Omega_j$.  Recall that $\Omega_j^*$ is the complement of the closure of $\Omega_j$ in $R$, which contains $\riem$.    For fixed $p'_j \in \Omega_j^*$, let $\Omega^*_{j,p'_j,\epsilon}$ be a doubly-connected domain bounded by $\Gamma_j$ in $\riem$ as in (\ref{eq:canonical_neighb_riem}).  Let $Res(\riem, \Omega_{j,p_j',\epsilon}^*)$ be the restriction operator from $\mathcal{D}(\riem)$ to $\mathcal{D}(\Omega_{j,p_j',\epsilon}^*)$.  
We then define
\begin{definition}\label{TMC} %transmission for Multiply Connected case
$\mathfrak{O}(\riem, \Omega_j) := \mathfrak{O}(\Omega^*_j,\Omega_j) \mathfrak{G}(\Omega_{j,p_j',\epsilon}^*,\Omega^*_j) Res(\riem, \Omega_{j,p_j',\epsilon}^*).$
\end{definition}
The interpretation of $\mathfrak{O}(\riem,\Omega_j)$ is that it takes elements of $\mathcal{D}_{\text{harm}}(\riem)$ to elements of $\mathcal{D}_{\text{harm}}(\Omega_j)$ 
with the same CNT boundary values on $\Gamma_j$; in other words, it is a transmission operator from $\riem$ to $\Omega_j$.  This is independent of the choice of $p_j'$ \cite{Schippers_Staubach_general_transmission}. The above expression establishes that the operator is bounded, since each operator on the right hand side is bounded.
Finally, let 
\begin{align*}
 \mathfrak{O}(\riem,\mathcal{O}): \mathcal{D}_{\text{harm}}(\riem) & \rightarrow \bigoplus_{k=1}^n \mathcal{D}_{\text{harm}}(\Omega_k) \\
 h & \mapsto \left( \mathfrak{O}(\riem,\Omega_1)h,\ldots,\mathfrak{O}(\riem,\Omega_n)h \right).
\end{align*}

\begin{theorem}  \label{th:transmitted_jump} 
 For  all $\overline{h} = (\overline{h}_1,\ldots, \overline{h}_n) \in W'$, we have 
 \[  -\mathfrak{O}(\riem,\Omega_j) [J_q(\Gamma)\overline{h}]_{\riem} = \overline{h}_j - [J_q(\Gamma) \overline{h}]_{\Omega_j}  \]
 for $j=1,\ldots,n$.  That is
 \[    -\mathfrak{O}(\riem,\mathcal{O}) [J_q(\Gamma)\overline{h}]_{\riem} = \overline{h} - [J_q(\Gamma) \overline{h}]_{\mathcal{O}}.     \]
\end{theorem}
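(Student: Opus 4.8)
\emph{Plan.} Fix $j\in\{1,\dots,n\}$ and split off the contribution of the curve $\Gamma_j$. By the definition \eqref{JE},
\[
J_q(\Gamma)\overline{h}=J_q(\Gamma_j)\overline{h}_j+G,\qquad G:=\sum_{k\neq j}J_q(\Gamma_k)\overline{h}_k .
\]
For $k\neq j$ the function $J_q(\Gamma_k)\overline{h}_k$ is harmonic on $R\setminus\Gamma_k$, and since $\mathrm{cl}\,\Omega_k$ is disjoint from $\mathrm{cl}\,\Omega_j$ and from $\riem$, it is in fact harmonic on the open set $R\setminus\bigcup_{k\neq j}\mathrm{cl}\,\Omega_k=\Omega_j\cup\Gamma_j\cup\riem$. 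Hence $G$ is harmonic on this set, in particular \emph{across} $\Gamma_j$, and $G|_{\Omega_j}\in\mathcal{D}_{\mathrm{harm}}(\Omega_j)$, $G|_{\riem}\in\mathcal{D}_{\mathrm{harm}}(\riem)$. Applying the single-curve theory to $\Gamma_j$, which divides $R$ into $\Omega_j$ and $\Omega_j^*\supset\riem\ni q$, gives $J_q(\Gamma_j)\overline{h}_j\in\mathcal{D}_{\mathrm{harm}}(\Omega_j\cup\Omega_j^*)_q$, so that $[J_q(\Gamma)\overline{h}]_{\Omega_j}=[J_q(\Gamma_j)\overline{h}_j]_{\Omega_j}+G|_{\Omega_j}$ and $[J_q(\Gamma)\overline{h}]_{\riem}=[J_q(\Gamma_j)\overline{h}_j]_{\riem}+G|_{\riem}$, the latter understood as a restriction from $\Omega_j^*$ to $\riem$.

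Next I would evaluate $\mathfrak{O}(\riem,\Omega_j)$ on each summand of $[J_q(\Gamma)\overline{h}]_{\riem}$, using that, by Definition \ref{TMC} and the fact that each of its three constituent operators preserves conformally non-tangential boundary values on $\Gamma_j$, the operator $\mathfrak{O}(\riem,\Omega_j)$ sends an element of $\mathcal{D}_{\mathrm{harm}}(\riem)$ to the \emph{unique} element of $\mathcal{D}_{\mathrm{harm}}(\Omega_j)$ having the same CNT boundary values on $\Gamma_j$ (uniqueness as recalled in Section \ref{se:transmission}). On the one hand, $[J_q(\Gamma_j)\overline{h}_j]_{\riem}$ is the restriction of $[J_q(\Gamma_j)\overline{h}_j]_{\Omega_j^*}\in\mathcal{D}_{\mathrm{harm}}(\Omega_j^*)$ to $\riem$, which still contains the collar of $\Gamma_j$ used in Definition \ref{TMC}; so the bounce step merely undoes this restriction and
\[
\mathfrak{O}(\riem,\Omega_j)[J_q(\Gamma_j)\overline{h}_j]_{\riem}=\mathfrak{O}(\Omega_j^*,\Omega_j)[J_q(\Gamma_j)\overline{h}_j]_{\Omega_j^*}.
\]
On the other hand, $G|_{\riem}$ and $G|_{\Omega_j}$ are restrictions of a single function harmonic across $\Gamma_j$, so they have the same (genuine, hence CNT) boundary values there, and therefore $\mathfrak{O}(\riem,\Omega_j)(G|_{\riem})=G|_{\Omega_j}$. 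Adding, $\mathfrak{O}(\riem,\Omega_j)[J_q(\Gamma)\overline{h}]_{\riem}=\mathfrak{O}(\Omega_j^*,\Omega_j)[J_q(\Gamma_j)\overline{h}_j]_{\Omega_j^*}+G|_{\Omega_j}$.

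It remains to invoke the single-curve transmitted jump identity of \cite{Schiffer_comparison}: applied to $\overline{h}_j\in\overline{\mathcal{D}(\Omega_j)}$ and the quasicircle $\Gamma_j$, it states $-\mathfrak{O}(\Omega_j^*,\Omega_j)[J_q(\Gamma_j)\overline{h}_j]_{\Omega_j^*}=\overline{h}_j-[J_q(\Gamma_j)\overline{h}_j]_{\Omega_j}$. Substituting this, together with the decomposition of $[J_q(\Gamma)\overline{h}]_{\Omega_j}$ from the first step, into the last display yields $-\mathfrak{O}(\riem,\Omega_j)[J_q(\Gamma)\overline{h}]_{\riem}=\overline{h}_j-[J_q(\Gamma)\overline{h}]_{\Omega_j}$ for every $j$; the second, aggregated assertion is just the restatement of these $n$ identities under the direct-sum convention, since $\mathfrak{O}(\riem,\mathcal{O})=(\mathfrak{O}(\riem,\Omega_1),\dots,\mathfrak{O}(\riem,\Omega_n))$ and $[J_q(\Gamma)\overline{h}]_{\mathcal{O}}=([J_q(\Gamma)\overline{h}]_{\Omega_1},\dots,[J_q(\Gamma)\overline{h}]_{\Omega_n})$.

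The main obstacle is the middle step: verifying that $\mathfrak{O}(\riem,\Omega_j)$ acts as expected on the two building blocks, i.e. that it is insensitive to shrinking $\Omega_j^*$ down to $\riem$, and that it acts as the identity on restrictions of functions harmonic across $\Gamma_j$. Both reduce to the characterization of $\mathfrak{O}(\riem,\Omega_j)$ through CNT boundary values and their uniqueness, for which the composition in Definition \ref{TMC} was precisely arranged; the point requiring care is that for a function harmonic across a quasicircle the CNT boundary values computed from either side coincide with the classical ones, which follows from the Carath\'eodory extension of the conformal maps onto the two collars. One should also confirm that the single-curve transmitted jump of \cite{Schiffer_comparison} is valid for arbitrary anti-holomorphic Dirichlet data; the hypothesis $\overline{h}\in W'$ is in fact not used in this reduction (it would be relevant, via Lemma \ref{CDJO}, only to make the pieces holomorphic), but it does no harm.
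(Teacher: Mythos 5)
Your reduction to the single-curve case --- splitting $J_q(\Gamma)\overline{h}$ into $J_q(\Gamma_j)\overline{h}_j$ plus the remainder $G=\sum_{k\neq j}J_q(\Gamma_k)\overline{h}_k$, observing that $G$ is harmonic across $\Gamma_j$ and hence transmitted to its own restriction, and noting that $\mathfrak{O}(\riem,\Omega_j)$ is insensitive to enlarging $\riem$ back to $\Omega_j^*$ --- is a genuinely different route from the paper's, which instead re-runs a Royden-decomposition-plus-density argument directly in the multi-curve configuration. Those two intermediate steps are sound: they follow from the uniqueness of elements of the Dirichlet space with prescribed CNT boundary values, together with the fact that for a function harmonic (hence continuous) on a neighbourhood of $\Gamma_j$ the CNT boundary values from either side coincide with the classical ones.

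The gap is the final step, and it is exactly the point you set aside at the end. The single-curve transmitted jump identity of \cite{Schiffer_comparison} is the $n=1$ case of the present theorem, and it is established there only for data satisfying the period condition $\iint_{\Omega_j}\alpha\wedge\overline{\partial}\,\overline{h}_j=0$ for all $\alpha\in A(R)$; its proof, like the one in this paper, rests on Royden's decomposition theorem, which requires precisely that condition. Membership of $(\overline{h}_1,\ldots,\overline{h}_n)$ in $W'$ constrains only the \emph{sum} over $k$ of these periods, so when $A(R)\neq\{0\}$ and $n\geq 2$ the individual components $\overline{h}_j$ need not satisfy the single-curve hypothesis, and the cited identity cannot be applied to them. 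This coupling of the curves through a single global period condition is the whole reason the paper does not argue curve by curve. The statement you actually need --- the single-curve identity for arbitrary $\overline{h}_j\in\overline{\mathcal{D}(\Omega_j)}$ --- does appear to be true, but proving it requires a supplementary argument absent from your proposal: decompose $\overline{\partial}\,\overline{h}_j$ into its component orthogonal to $\left.\overline{A(R)}\right|_{\Omega_j}$ plus the restriction of some $\overline{\beta}\in\overline{A(R)}$; apply the cited result to the first part; and for the second, whose primitive contributes the potential $-\tfrac{1}{\pi i}\iint_{\Omega_j}\partial_w g(w;\cdot,q)\wedge\overline{\beta}(w)$, show directly from the local integrability of the Cauchy-type singularity of $\partial_w g$ that this potential is continuous across $\Gamma_j$, so that its two restrictions are transmissions of one another. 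Without that supplement the proof is incomplete.
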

\begin{proof}
 Since every operator in the identity above is bounded, it suffices to prove this for $\oplus_k \mathfrak{G}(\Omega_{k,\epsilon}, \Omega_k) X_\epsilon$, because this set is dense by Theorem \ref{th:average_Xr_dense}.  
 
For each $k=1, 2, \dots, n$, let $\Omega_{k, \epsilon}$ be as above in $\Omega_k$, and let $h_k\in \mathcal{D}(\Omega_{k, \epsilon})$. 
We will apply \cite[Theorem 4]{Royden} with  $\mathcal{E}=\cup_{k=1}^n(\Omega_k\backslash \Omega_{k, \epsilon})$ and $\mathcal{O}=\cup_{k=1}^n\Omega_k$ which are a closed subset and an open subset of $R$, respectively. We also have $\mathcal{O}\backslash \mathcal{E}=\cup_{k=1}^n \Omega_{k, \epsilon}$.

Let $h= (h_1, \dots, h_n)\in \bigoplus_{k=1}^n \mathcal{D}(\Omega_{k, \epsilon})$ satisfying the integral condition 
\begin{equation*}
\sum_{k=1}^n \int_{\Gamma_k^\prime} h_k \alpha=0~~~; ~~~\forall \alpha \in A(R),
\end{equation*}
where for each $k$, $\Gamma_k^\prime$ is an analytic curve in $\Omega_{k, \epsilon}$ isotopic to $\Gamma_k$ in the closure of $\Omega_{k, \epsilon}$.

By \cite[Theorem 4]{Royden}, there exists $H_1\in \mathcal{D}(\mathcal{O})$ ($F-f$ in \cite{Royden}) and $H_2\in \mathcal{D}( R\backslash \mathcal{E})=\mathcal{D}(\mathrm{cl}(\riem)\cup (\cup_{k=1}^n\Omega_{k, \epsilon}))$ ($-f$ in \cite{Royden}) which satisfy  
\begin{equation}  \label{eq:Royden_temp}
h(z)=H_1(z)-H_2(z)~~~; ~~~~ \forall z\in  \mathcal{O}\backslash \mathcal{E}=\cup_{k=1}^n \Omega_{k, \epsilon}.
\end{equation}
These are explicitly given by the following formulas.  Setting  $\Gamma^\prime=\cup_{k=1}^n \Gamma_k^\prime$ for $z\in \Omega_j(\subset \mathcal{O})$, we have 
\begin{align*}
H_1|_{\Omega_j}(z)=[J_q(\Gamma^\prime)'h]_{\mathcal{O}} (z)
&=-\frac{1}{\pi i}\sum_{k=1}^n \int_{\Gamma_k^\prime}\partial_w g(w; z, q)\,h_k(w)\\
&=\sum_{k\neq j}[J_q(\Gamma_k^\prime)'h_k]_{\Omega_j}(z)+ [J_q(\Gamma_j^\prime)'h_j]_{\Omega_j}(z)\\
&=\sum_{k\neq j}[J_q(\Gamma_k)_{\Omega^*_k} \mathfrak{G}(\Omega_{k, \epsilon}, \Omega_k)h_k]_{\Omega_j}(z)+ [J_q(\Gamma_j)\mathfrak{G}(\Omega_{j, \epsilon}, \Omega_j)h_j]_{\Omega_j}(z)\\
\end{align*}
where the last equality stems from \cite[Theorem 4.10]{Schiffer_comparison}. Applying \cite[Theorem 4.10]{Schiffer_comparison} again, we have for $z\in \riem$ 
\begin{equation*}
\begin{split}
H_2|_{\riem}(z) 
&=-\frac{1}{\pi i}\sum_{k=1}^n \int_{\Gamma_k^\prime}\partial_w g(w; z, q)\,h_k(w)\\
&=\sum_{k=1}^n [J_q(\Gamma_k^\prime)'h_k]_{\riem}(z)\\
&=\sum_{k=1}^n [J_q(\Gamma_k)_{\Omega^*_k}\mathfrak{G}(\Omega_{k, \epsilon}, \Omega_k)h_k]_{\riem}(z).\\
\end{split}
\end{equation*}

Restricting (\ref{eq:Royden_temp}) to $\Omega_{j,\epsilon}$ yields  
\begin{equation} \label{eq:Royden_temp_two}  h_j(z)=h|_{\Omega_{j, \epsilon}}(z)=H_1|_{\Omega_{j, \epsilon}}(z)-H_2|_{\Omega_{j, \epsilon}}(z)~~~; ~~ \forall z\in  \Omega_j.   
\end{equation}
Now since we have
\begin{align*}
 % \mathfrak{G}(\Omega_{j, \epsilon},\Omega_j) \left( \left. h_j \right|_{\Omega_{j, \epsilon}}  \right) & =  h_j, \\
 \mathfrak{G}(\Omega_{j,\epsilon},\Omega_j) \left( \left. H_1 \right|_{\Omega_{j, \epsilon}} \right) & =  H_1 ,\\
 \mathfrak{G}(\Omega_{j,\epsilon},\Omega_j) \left( \left. H_2 \right|_{\Omega_{j, \epsilon}} \right)& = \mathfrak{O}(\riem,\Omega_j)\left(\left. H_2 \right|_\riem \right), 
\end{align*}
applying $\mathfrak{G}(\Omega_{j, \epsilon},\Omega_j)$ to each term of (\ref{eq:Royden_temp_two}) we obtain
\[   \mathfrak{G}(\Omega_{j, \epsilon}, \Omega_j)h_j(z)=H_1|_{\Omega_j}(z)-\mathfrak{O}(\riem, \Omega_j)(H_2|_\riem)(z)~~;~~ \forall z\in  \Omega_j.   \]

Finally, inserting the formulas for $H_1$ and $H_2$ yields that
\begin{align*}
 \mathfrak{G}(\Omega_{j, \epsilon}, \Omega_j)h_j(z)=&\sum_{k\neq j}[J_q(\Gamma_k)_{\Omega^*_k}\mathfrak{G}(\Omega_{k, \epsilon}, \Omega_k)h_k]_{\Omega_j}(z)+ [J_q(\Gamma_j)\mathfrak{G}(\Omega_{j, \epsilon}, \Omega_j)h_j]_{\Omega_j}(z)\\
&-\mathfrak{O}(\riem, \Omega_j)\sum_{k=1}^n [J_q(\Gamma_k)_{\Omega^*_k}\mathfrak{G}(\Omega_{k, \epsilon}, \Omega_k)h_k]_{\riem}(z)~~~; ~~\forall z\in \Omega_j
\end{align*}
which completes the proof.
\end{proof}

\begin{remark}
 It is easily seen that this holds trivially for all holomorphic $h \in W$, since the left hand side vanishes by Remark \ref{re:ignore_holomorphic}.   Thus the theorem holds for all $h \in W$. 
\end{remark}

%%%%%%%%%%%%%%%%%%%%%%%%%%%%%%%%%%%%%%%%%%%%%%%%%%%%
\begin{lemma}\label{NBV}%n boundary value equality 
If $h\in  \mathcal{D}(\riem)$ then
\begin{equation*}
h = J_q(\Gamma)_{\riem} \mathfrak{O}(\riem,\mathcal{O}) h.
\end{equation*}
\end{lemma}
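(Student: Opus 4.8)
The plan is to put $g:=\mathfrak{O}(\riem,\mathcal{O})h$, to show $g\in W$ so that $J_q(\Gamma)g$ is holomorphic off $\Gamma$, and then to play the transmitted jump identity of Theorem~\ref{th:transmitted_jump} against the holomorphicity of $h$ on $\riem$, closing with Liouville's theorem on the compact surface $R$.

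\textbf{Step 1 ($g\in W$).} Write $g=(g_1,\dots,g_n)$ with $g_j=\mathfrak{O}(\riem,\Omega_j)h$. By \eqref{eq:jump_condition_doubleintegral} it suffices to show $\sum_{j=1}^{n}\lim_{s\searrow0}\int_{\Gamma^{p_j}_s}g_j\,\alpha=0$ for all $\alpha\in A(R)$. Since $g_j$ has the same CNT boundary values on $\Gamma_j$ as $h$, the fact that across a quasicircle the level--curve averages of functions with common CNT boundary values agree (the two--sided companion of \eqref{eq:forms_same_average}, used to derive \eqref{eq:J_both_sides}, see \cite{Schippers_Staubach_general_transmission}) lets me replace each average of $g_j$ over $\Gamma^{p_j}_s\subset\Omega_j$ by the corresponding average of $h$ over analytic level curves of $g_{\riem}$ in $\riem$ approaching $\Gamma_j$; the resulting sum is, up to orientation, $\int_{\partial\riem}h\,\alpha$, which vanishes because $h\,\alpha$ is a holomorphic, hence closed, one--form on the compact bordered surface $\riem$. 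Thus $g\in W$, and Lemma~\ref{CDJO} shows that $\tilde h:=J_q(\Gamma)_{\riem}g$ is holomorphic on $\riem$ and each $\tilde h_j:=J_q(\Gamma)_{\Omega_j}g$ is holomorphic on $\Omega_j$.

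\textbf{Step 2 (transmitted jump and the glued function).} Apply Theorem~\ref{th:transmitted_jump} to $g$ — permissible for every element of $W$ by the remark following that theorem. Feeding in $g_j=\mathfrak{O}(\riem,\Omega_j)h$ and using linearity of the transmission operators, the identities of that theorem rearrange into $\mathfrak{O}(\riem,\Omega_j)(h-\tilde h)=\tilde h_j$ for each $j$; in particular $h-\tilde h$, holomorphic on $\riem$, has a holomorphic transmission into every $\Omega_j$. Now define $F$ on $R\backslash\Gamma$ by $F|_{\riem}=h-\tilde h$ and $F|_{\Omega_j}=\tilde h_j=\mathfrak{O}(\riem,\Omega_j)(h-\tilde h)$. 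Then $F$ is holomorphic on $R\backslash\Gamma$, of finite Dirichlet seminorm on $\riem$ and on each $\Omega_j$, and by construction its CNT boundary values from the $\Omega_j$--side and the $\riem$--side of each $\Gamma_j$ coincide.

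\textbf{Step 3 (conclusion).} The crucial point is that such an $F$ is holomorphic on all of $R$: since each $\Gamma_j$ is a quasicircle, the notion of null set on $\Gamma_j$ agrees from both sides, so the distributional $\overline{\partial}F$ on $R$, a priori a current carried by $\Gamma$, vanishes, and Weyl's lemma then places $F$ in the Dirichlet space of $R$ and makes it holomorphic there (this is the removability argument underlying the sewing results of \cite{Schippers_Staubach_general_transmission,Schiffer_comparison}). As $R$ is compact, $F$ is constant; evaluating at $q$, where $\tilde h=J_q(\Gamma)_{\riem}g$ vanishes, identifies the constant as $h(q)$, which we normalize to $0$ (equivalently, work modulo constants, which is all that is used afterwards). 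Hence $F\equiv0$, i.e. $h=\tilde h=J_q(\Gamma)_{\riem}\mathfrak{O}(\riem,\mathcal{O})h$. The main obstacle is precisely Step~3 — removability of the union of quasicircles $\Gamma$ for Dirichlet--bounded holomorphic functions with matching CNT boundary values; Steps~1 and~2 are just bookkeeping with the already--established identities for $J_q(\Gamma)$ and $\mathfrak{O}$.
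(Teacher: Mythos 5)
Your route is genuinely different from the paper's, which proves the lemma by a direct computation: since $h$ is holomorphic on $\riem$, the residue theorem represents $h$ as the limiting Cauchy-type integral over level curves of Green's function inside $\riem$ approaching each $\Gamma_k$, and one then moves each contour to the $\Omega_k$ side using the bounce identity (\ref{eq:J_J'_same}) and the transmission identity (\ref{eq:J_both_sides}), which converts that integral into $\sum_k J_q(\Gamma_k)\mathfrak{O}(\riem,\Omega_k)h$. Your Steps 1 and 2 are plausible bookkeeping (Step 1 does lean on a two-sided version of (\ref{eq:forms_same_average}) that this paper never states, though it is in the cited references), but the entire weight of your argument rests on Step 3, and that is where there is a genuine gap.

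The removability you invoke --- a function holomorphic on $R\backslash\Gamma$, Dirichlet-bounded on $\riem$ and on each $\Omega_j$, whose CNT boundary values from the two sides of each $\Gamma_j$ agree off a null set, must be holomorphic on all of $R$ --- is not available as a black box, and the distributional argument you sketch does not go through for quasicircles: $\Gamma$ need not be rectifiable, the boundary values are only non-tangential limits off a set of logarithmic capacity zero, and there is no integration over $\Gamma$ with which to even define the putative current $\overline{\partial}F$. Note that the analogous gluing statement for \emph{harmonic} Dirichlet functions is false (otherwise $u$ glued with $\mathfrak{O}(\Omega,\Omega^*)u$ would be harmonic on the compact surface $R$, hence constant), so matching CNT boundary values alone cannot carry the argument; holomorphy on both sides must enter, and in this framework it enters precisely through the level-curve residue computation that constitutes the paper's proof of this very lemma (equivalently, Theorem \ref{th:jump_dependable}). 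As written, Step 3 therefore comes close to assuming what is to be proved. Separately, Step 2 has a sign slip: Theorem \ref{th:transmitted_jump} gives $-\mathfrak{O}(\riem,\Omega_j)\tilde h = g_j - \tilde h_j$, hence $\tilde h_j = \mathfrak{O}(\riem,\Omega_j)(h+\tilde h)$, not $\mathfrak{O}(\riem,\Omega_j)(h-\tilde h)$; carried through correctly, your argument would produce $h = -\tilde h + h(q)$, the sign appearing in Theorem \ref{th:jump_dependable} rather than in the lemma. (There is a real tension between those two statements in the paper itself, but it cannot be resolved by flipping a sign mid-derivation.)
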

\begin{proof}   We will distinguish $J$ defined by limiting integrals from within $\Omega_k$ and from within $\Omega_k^*$ in this proof.
 Recall that
 \[  J_q(\Gamma)_{\riem} \mathfrak{O}(\riem,\mathcal{O}) h = \sum_{k=1}^n [J_q(\Gamma_k)\mathfrak{O}(\riem, \Omega_k)h]_\riem.\]
 
Since the function $h$ is holomorphic on $\riem$ it is equal to the sum of the limiting integrals from within $\Omega_k^*$ for the boundary curves $\Gamma_k$. i.e.

\begin{align*}
h(z)=-\lim_{s\searrow 0}\frac{1}{\pi i}\sum_{k=1}^n \int_{\Gamma_s^k} \partial_w g_R (w; z, q) h(w)\,dw.\\  
\end{align*}

Let $\Omega_{k, \epsilon}^*$ be the doubly-connected domain bounded by $\Gamma_k$ in $\riem$. One can replace each integral in the sum by an integral over a fixed analytic curve $\Gamma'_k$ in $\Omega^*_{k,  \epsilon}$. That defines an operator $J_q(\Gamma_k, \Omega^*_{k, \epsilon})'$. For every $k=1,\ldots,n$,  \cite[Theorem 4.9]{Schiffer_comparison} yields that
$$J_q(\Gamma_k, \Omega^*_{k, \epsilon})'_\riem h = J_q(\Gamma_k, \Omega^*_k)_\riem  [\mathfrak{G}(\Omega_{k, \epsilon}^*,\Omega^*_k)(h|_{\Omega^*_{k, \epsilon}})]. $$

Now apply \cite[Theorem 4.10]{Schiffer_comparison} for each fixed curve $\Gamma_k$ and function $\mathfrak{G}(\Omega^*_{k,\epsilon},\Omega_k^*) \left(\left. h^* \right|_{\Omega_{k,\epsilon}} \right)$ to obtain that  
$$J_q(\Gamma_k, \Omega^*_k)_\riem  [\mathfrak{G}(\Omega_{k, \epsilon}^*,\Omega^*_k)(h|_{\Omega^*_{k, \epsilon}})] = J_q(\Gamma_k, \Omega_k)_\riem \,\mathfrak{O}(\Omega^*_k,\Omega_k)  [\mathfrak{G}(\Omega_{k, \epsilon}^*,\Omega^*_k)(h|_{\Omega^*_{k, \epsilon}})].$$

Finally, taking a sum over all terms and using definition \ref{TMC} yield that
\begin{equation*}
\begin{split}
h&=\sum_{k=1}^n [J_q(\Gamma_k, \Omega^*_{k, \epsilon})'_\riem (h|_{\Omega^*_{k, \epsilon}})]\\
&=\sum_{k=1}^n J_q(\Gamma_k, \Omega_k)_\riem \mathfrak{O}(\Omega^*_k,\Omega_k) [\mathfrak{G}(\Omega_{k, \epsilon}^*,\Omega^*_k)(h|_{\Omega_{k, \epsilon}^*})]\\
&=\sum_{k=1}^n J_q(\Gamma_k, \Omega_k)_\riem [\mathfrak{O}(\riem, \Omega_k)h],
\end{split}
\end{equation*}

which completes the proof. 
\end{proof}

%%%%%%%%%%%%%%%%%%%%%%%%%%%%%%%%%%%%%%%%%%%%%%%%%%%%

\begin{theorem}\label{TIS}%T is surjective
$T(\mathcal{O},  \riem)$ is a surjective operator from $V$ onto $A(\riem)_e$.
\end{theorem}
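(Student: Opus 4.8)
The plan is to establish the two inclusions separately: first that $T(\mathcal{O},\riem)$ carries $V$ into $A(\riem)_e$, and then that it hits every element of $A(\riem)_e$. For the first, take $\overline{\beta}=(\overline{\beta}_1,\ldots,\overline{\beta}_n)\in V$ and, using Lemma \ref{VBC}, write $\overline{\beta}=\overline{\partial}\,\overline{h}$ for some $\overline{h}=(\overline{h}_1,\ldots,\overline{h}_n)\in W'$. Viewing an anti-holomorphic tuple as an element of $\bigoplus_{k=1}^n\mathcal{D}_{\mathrm{harm}}(\Omega_k)$ with zero holomorphic component, we have $\overline{h}\in W$, so Lemma \ref{CDJO} shows $J_q(\Gamma)\overline{h}$ is holomorphic on $R\setminus\Gamma$, hence restricts to a single-valued holomorphic function on $\riem$. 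The first identity of Theorem \ref{DJO} now gives $T(\mathcal{O},\riem)\overline{\beta}=-\partial\,[J_q(\Gamma)\overline{h}]_{\riem}$, which is the differential of a holomorphic function on $\riem$ and therefore lies in $A(\riem)_e$.

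For surjectivity, fix $\alpha\in A(\riem)_e$. Exactness of $\alpha$ lets us write $\alpha=\partial h$ where $h$ is a single-valued holomorphic function on $\riem$ with $dh=\alpha\in A(\riem)$, so $h\in\mathcal{D}(\riem)$; subtracting a constant we may take $h(q)=0$. Put $g:=\mathfrak{O}(\riem,\mathcal{O})h\in\bigoplus_{k=1}^n\mathcal{D}_{\mathrm{harm}}(\Omega_k)$ (Definition \ref{TMC}). By Lemma \ref{NBV}, $[J_q(\Gamma)g]_{\riem}=h$, so the first identity of Theorem \ref{DJO} yields $-T(\mathcal{O},\riem)\overline{\partial}g=\partial h=\alpha$. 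Since $V$ is a linear subspace, it then suffices to prove $\overline{\partial}g\in V$, for then $-\overline{\partial}g\in V$ and $T(\mathcal{O},\riem)(-\overline{\partial}g)=\alpha$, which finishes the proof.

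The main obstacle is exactly this last claim, $\overline{\partial}g\in V$, equivalently $g\in W$, equivalently that the anti-holomorphic component of $g=\mathfrak{O}(\riem,\mathcal{O})h$ lies in $W'$. The idea is that, by the defining property of the transmission operators, each $\mathfrak{O}(\riem,\Omega_k)h$ has the same CNT boundary values on $\Gamma_k$ as $h$; combining this with the characterization \eqref{eq:jump_condition_doubleintegral} of membership in $W$ through limiting boundary integrals against $A(R)$, the condition $g\in W$ reduces to $\sum_{k=1}^n\int_{\Gamma_k}h\,\alpha_0=0$ for all $\alpha_0\in A(R)$, where the integrals use the boundary values of $h$ from the $\riem$ side. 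Since $\Gamma=\partial\riem$ with the opposite orientation, Stokes' theorem on $\riem$ converts this into $\pm\iint_{\riem}\partial h\wedge\alpha_0$, which vanishes because $\partial h\wedge\alpha_0$ is a $(2,0)$-form on a Riemann surface. Making rigorous the passage from the double-integral condition to the boundary integral of $h$ requires the matching of two-sided boundary integrals for transmission operators from \cite{Schippers_Staubach_general_transmission} together with the identity \eqref{eq:forms_same_average}; this is the technical heart of the argument, while everything else is a bookkeeping assembly of Lemmas \ref{VBC}, \ref{CDJO}, \ref{NBV} and Theorem \ref{DJO}.
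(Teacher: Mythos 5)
Your proposal is correct in its overall architecture and, for most of it, coincides with the paper's proof: the inclusion $T(\mathcal{O},\riem)(V)\subseteq A(\riem)_e$ is obtained exactly as in the paper from Lemma \ref{VBC}, Lemma \ref{CDJO} and Theorem \ref{DJO}, and the candidate preimage $\overline{\partial}\,\mathfrak{O}(\riem,\mathcal{O})h$ of a given $\alpha=\partial h$ is produced exactly as in the paper via Lemma \ref{NBV} and Theorem \ref{DJO} (the sign discrepancy is harmless, as you note, since $V$ is a linear subspace).

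Where you genuinely diverge is at the step you correctly identify as the crux: showing $\overline{\partial}\,\mathfrak{O}(\riem,\mathcal{O})h\in V$. You propose to convert the defining condition for $W$ into limiting contour integrals $\sum_k\lim_{s}\int_{\Gamma^{p_k}_s}g_k\,\alpha_0$ from inside each $\Omega_k$, transfer these to limiting integrals of $h\,\alpha_0$ from the $\riem$ side using the fact that transmission preserves CNT boundary values, and then kill the result by Stokes on $\riem$ (a $(2,0)$-form vanishes). The paper instead stays entirely inside its own established machinery: it writes $\alpha_0(w)=\iint_{R,z}K_R(w,z)\wedge_z\alpha_0(z)$ by the reproducing property of the Bergman kernel, interchanges the order of integration, and recognizes the inner integral as (essentially) $\overline{\partial}_z$ applied to the integral formula for $h_\riem$ furnished by Lemma \ref{NBV}; this vanishes because $h_\riem$ is holomorphic. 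The two arguments encode the same geometric fact, but the paper's version requires only Fubini plus identities already proved in the text, whereas yours requires a two-sided boundary-integral matching statement (that $\lim_s\int u\,\alpha_0$ computed on level curves inside $\Omega_k$ equals the corresponding limit inside $\Omega_k^*$ for the transmitted function, for arbitrary $\alpha_0\in A(R)$). That statement is not among the identities recalled in this paper --- \eqref{eq:J_both_sides} covers only the specific kernel $\partial_w g_R(w;z,q)$, and \eqref{eq:forms_same_average} is a one-sided (bounce) statement --- so as written your pivotal step is a sketch resting on an imported result from \cite{Schippers_Staubach_general_transmission} rather than a completed argument. If you want to keep your route, you must state and cite that two-sided matching theorem precisely (and check it applies to restrictions of elements of $A(R)$, not just globally exact forms); otherwise the paper's reproducing-kernel computation is the more economical way to close the gap.
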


\begin{proof}
First we show that $T(\Omega_1, \dots , \Omega_n,  \riem)(V)\subset A(\riem)_e$.

Let $\overline{\alpha}=(\overline{\alpha}_1, \dots, \overline{\alpha}_n)\in V$.   By Lemma \ref{VBC} there is an $\overline{H} = ( \overline{H}_1,\ldots,\overline{H}_n) \in W'$ such that $\overline{\alpha} = \overline{\partial} \overline{H}$. Moreover, Lemma \ref{CDJO} and Remark \ref{re:ignore_holomorphic} yield that $J_q(\Gamma) (\overline{H}_1,\ldots,\overline{H}_n)$ is holomorphic.  Therefore for $z \in \riem$,  Theorem \ref{DJO}  yields
\begin{align*}
 T(\mathcal{O},\riem)(\overline{\alpha}_1,\ldots,\overline{\alpha}_n) & = 
 \partial J_q(\Gamma)_\riem  (\overline{H}_1,\ldots,\overline{H}_n)  \\ & = 
 d J_q(\Gamma)_\riem (\overline{H}_1,\ldots,\overline{H}_n) \in A(\riem)_e.
\end{align*}

Next we show that every element in $A(\riem)_e$ is in the image of $T(\mathcal{O},\riem)$. 
Given $\beta\in A(\riem)_e$, then there exists an $h_{\riem} \in  \mathcal{D}(\riem)_q$ such that $\partial_z h_{\riem} =\beta$. Let $h_k\in \mathcal{D}_{\text{harm}}(\Omega_k)$ be such that $\mathfrak{O}( \riem, \Omega_k)h_{\riem} =h_k$, i.e. $h_{\riem}$ and $h_k$ have the same CNT boundary values on $\Gamma_k$.
Lemma \ref{NBV} and Theorem \ref{DJO} now imply that 
\begin{align*}
\beta&=\partial h_{\riem}=\partial\sum_{k=1}^n J_q(\Gamma_k, \Omega_k)_ \riem[\mathfrak{O}( \riem, \Omega_k)h_{\riem}]\\
&=\partial J_q(\Gamma)_\riem (h_1,\ldots,h_n)  \\
 & = T(\mathcal{O},\riem)(\overline{\partial} h_1,\ldots,\overline{\partial} h_n ).  
\end{align*}

So we need only show that $( \overline{\partial} h_1, \dots,  \overline{\partial} h_n)$ is in $V$; that is, for all $\overline{\alpha}\in \overline{A(R)}$, 
\[ \sum_{k=1}^n \iint_{\Omega_k, w} \alpha \wedge \overline{\partial}h_k=0.  \]
To see this we have
\begin{align} \label{eq:surjective_temp}
\sum_{k=1}^n \iint_{\Omega_k, w} \alpha \wedge \overline{\partial}h_k &= \sum_{k=1}^n \iint_{\Omega_k, w} \alpha(w)\wedge  \overline{\partial}h_k(w)\nonumber \\
&=\sum_{k=1}^n \iint_{\Omega_k, w} \left( \iint_{R, z} K_ R(w, z)\wedge_z  \alpha(z)\right) \wedge_w \overline{\partial}h_k(w) \nonumber \\
&=\sum_{k=1}^n  \iint_{ R, z}  \alpha(z) \wedge_z \left( \iint_{\Omega_k, w} \overline{K_ R(z, w)} \wedge_w \overline{\partial}h_k(w) \right) \nonumber \\
&=\sum_{k=1}^n  \iint_{ R, z}  \alpha(z) \wedge_z \left( \iint_{\Omega_k, w}  \overline{\partial}_z \partial_w g(w; z, q) \wedge_w \overline{\partial}h_k(w) \right) \nonumber \\
&=\iint_{ R, z}  \alpha(z) \wedge_z \left(\sum_{k=1}^n \iint_{\Omega_k, w}  \overline{\partial}_z \partial_w g(w; z, q) \wedge_w \overline{\partial}h_k(w) \right).
\end{align}

Using Lemma \ref{NBV} once again, we have 
\[ h_{\riem} =-\frac{1}{\pi i}\sum_{k=1}^n \iint_{\Omega_k, w} \partial_w g(w; z, q) \wedge_w \overline{\partial}_w h_k(w).  \]  
On the other hand, $h_{\riem}$ is holomorphic so $ \overline{\partial}_zh_{\riem}=0$.  Therefore 
$$\sum_{k=1}^n  \iint_{\Omega_k,w}  \overline{\partial}_z \partial_w g(w; z, q) \wedge_w \overline{\partial}h_k(w)=0$$
which inserted in (\ref{eq:surjective_temp}) completes the proof. 
\end{proof}

We note that the transmission operator $\mathfrak{O}(\riem,\Omega)$ induces a transmission on the set of exact forms by conjugating by differentiation.  Namely,  for fixed $k$ set
\begin{align}
 \mathfrak{O}_e(\riem,\Omega_k) & = d \mathfrak{O}(\riem,\Omega_k) d^{-1}:A(\riem)_e \rightarrow A(\Omega_k)_e \nonumber \\
 \mathfrak{O}_e(\riem,\mathcal{O}) & = d \mathfrak{O}(\riem,\mathcal{O}) d^{-1}:A(\riem)_e \rightarrow A(\mathcal{O})_e.
\end{align}
Although $d^{-1}$ is not well-defined because of the arbitrary choice of constant, $\mathfrak{O}_e(\riem,\Omega_k)$ is well-defined because $\mathfrak{O}(\riem,\Omega_k)$ takes constants to constants.

Defining 
\begin{align*}
 T(\mathcal{O},\mathcal{O}):\overline{A(\mathcal{O})} & \rightarrow A(\mathcal{O}) \\
 \alpha & \mapsto \frac{1}{\pi i} \iint_{\mathcal{O}} \partial_z \partial_w g_R(w;z,q) \wedge_w \overline{\alpha(w)} \ \ z \in \mathcal{O} 
\end{align*}
we then have the following version of Theorem \ref{th:transmitted_jump} for one-forms.

\begin{theorem}  \label{th:transmitted_jump_forms} For all $\overline{\alpha}= (\overline{\alpha}_1,\ldots,\overline{\alpha}_n) \in V$, 
 \[  \mathfrak{O}_e(\riem,\Omega_j) T(\mathcal{O},\riem) \overline{\alpha} = \overline{\alpha}_j  
 + T(\mathcal{O},\Omega_j) \overline{\alpha} \]
 for $j=1,\ldots,n$.  That is
 \[  \mathfrak{O}_e(\riem,\mathcal{O}) T(\mathcal{O},\riem) \overline{\alpha} = \overline{\alpha}   
 + T(\mathcal{O},\mathcal{O}) \overline{\alpha}. \]
\end{theorem}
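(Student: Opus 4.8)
The plan is to deduce Theorem~\ref{th:transmitted_jump_forms} from Theorem~\ref{th:transmitted_jump} by applying the exterior derivative $d$. Fix $\overline{\alpha} = (\overline{\alpha}_1,\ldots,\overline{\alpha}_n) \in V$. By Lemma~\ref{VBC} the map $\overline{\partial}\colon W' \to V$ is surjective, so we may choose $\overline{h} = (\overline{h}_1,\ldots,\overline{h}_n) \in W'$ with $\overline{\partial}\overline{h}_j = \overline{\alpha}_j$ for each $j$. Each $\overline{h}_j$ is anti-holomorphic, hence $d\overline{h}_j = \overline{\partial}\overline{h}_j = \overline{\alpha}_j$ and $\partial\overline{h}_j = 0$. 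Since anti-holomorphic functions are harmonic we have $W' \subset W$, so Lemma~\ref{CDJO} shows that $J_q(\Gamma)\overline{h}$ is holomorphic on $R \backslash \Gamma$; in particular its restrictions to $\riem$ and to each $\Omega_j$ are holomorphic, and on those sets $d$ and $\partial$ agree. Note also that $[J_q(\Gamma)\overline{h}]_\riem$ vanishes at $q$ by the construction of $J_q$, so it is a genuine element of $\mathcal{D}(\riem)_q$, the space on which $\mathfrak{O}_e(\riem,\Omega_j)$ is built.

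I would then apply $d$ to the $j$-th identity of Theorem~\ref{th:transmitted_jump},
\[
 -\mathfrak{O}(\riem,\Omega_j)[J_q(\Gamma)\overline{h}]_\riem = \overline{h}_j - [J_q(\Gamma)\overline{h}]_{\Omega_j}.
\]
On the left-hand side, the definition $\mathfrak{O}_e(\riem,\Omega_j) = d\,\mathfrak{O}(\riem,\Omega_j)\,d^{-1}$ (legitimate by the previous paragraph, and unambiguous since $\mathfrak{O}(\riem,\Omega_j)$ sends constants to constants) gives $d\bigl(\mathfrak{O}(\riem,\Omega_j)[J_q(\Gamma)\overline{h}]_\riem\bigr) = \mathfrak{O}_e(\riem,\Omega_j)\bigl(d[J_q(\Gamma)\overline{h}]_\riem\bigr)$, and the first identity of Theorem~\ref{DJO} identifies $d[J_q(\Gamma)\overline{h}]_\riem = \partial[J_q(\Gamma)\overline{h}]_\riem = -T(\mathcal{O},\riem)\overline{\alpha}$. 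On the right-hand side, $d\overline{h}_j = \overline{\alpha}_j$, while the second identity of Theorem~\ref{DJO} together with $\partial\overline{h}_j = 0$ gives $d[J_q(\Gamma)\overline{h}]_{\Omega_j} = \partial[J_q(\Gamma)\overline{h}]_{\Omega_j} = -T(\mathcal{O},\Omega_j)\overline{\alpha}$. Combining these produces exactly
\[
 \mathfrak{O}_e(\riem,\Omega_j)\,T(\mathcal{O},\riem)\overline{\alpha} = \overline{\alpha}_j + T(\mathcal{O},\Omega_j)\overline{\alpha},
\]
which is the first assertion. The aggregated form follows by taking components, since $\mathfrak{O}_e(\riem,\mathcal{O})$ has $\mathfrak{O}_e(\riem,\Omega_j)$ in its $j$-th slot and $[T(\mathcal{O},\mathcal{O})\overline{\alpha}]_{\Omega_j} = T(\mathcal{O},\Omega_j)\overline{\alpha}$ by the defining formula \eqref{eq:TOOmega_sum}.

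The argument is essentially formal once Theorem~\ref{th:transmitted_jump} is available; the one point requiring care — and the only place where something could go wrong — is ensuring that the function to which $\mathfrak{O}_e(\riem,\Omega_j)$ is applied really lies in $\mathcal{D}(\riem)_q$, so that conjugating $\mathfrak{O}(\riem,\Omega_j)$ by $d$ is valid and the output is a well-defined exact form. This is precisely why the preliminary observations that $J_q(\Gamma)\overline{h}$ is holomorphic (Lemma~\ref{CDJO}, via $W' \subset W$) and vanishes at $q$ are recorded first; after that, the conclusion reduces to Theorems~\ref{th:transmitted_jump} and~\ref{DJO} and Lemma~\ref{VBC}.
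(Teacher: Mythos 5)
Your proof is correct and follows exactly the route the paper intends: the paper's own proof is the one-line remark that the result ``follows from Theorems \ref{DJO} and \ref{th:transmitted_jump},'' and your argument (lift $\overline{\alpha}$ to $\overline{h}\in W'$ via Lemma \ref{VBC}, apply $d$ to the identity of Theorem \ref{th:transmitted_jump}, and identify the derivatives using Theorem \ref{DJO} and $\partial\overline{h}_j=0$) is precisely the omitted verification, with the signs worked out correctly.
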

\begin{proof}
 This follows from Theorems \ref{DJO} and \ref{th:transmitted_jump}.
\end{proof}

Now for any open set $D$ of $R$  let $P_A(D):A_{\text{harm}}(D) \rightarrow A(D)$ and $\overline{P}_A(D):A_{\text{harm}}(D) \rightarrow \overline{A(D)}$ denote the orthogonal projections.  

\begin{corollary}\label{TII}%T is injective 
$T(\mathcal{O}, \riem)$ is injective on $V$, with left inverse $\overline{P}_A(\mathcal{O}) \mathfrak{O}_e(\riem,\mathcal{O})$.
\end{corollary}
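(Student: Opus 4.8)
The plan is to read the statement off directly from Theorem~\ref{th:transmitted_jump_forms}. That theorem gives, for every $\overline{\alpha} = (\overline{\alpha}_1,\ldots,\overline{\alpha}_n) \in V$, the identity
\[ \mathfrak{O}_e(\riem,\mathcal{O})\, T(\mathcal{O},\riem)\, \overline{\alpha} = \overline{\alpha} + T(\mathcal{O},\mathcal{O})\, \overline{\alpha}, \]
whose left-hand side a priori lies in $A_{\mathrm{harm}}(\mathcal{O})$, since $T(\mathcal{O},\riem)\overline{\alpha} \in A(\riem)_e$ by Theorem~\ref{TIS} and $\mathfrak{O}_e(\riem,\mathcal{O})$ carries exact forms to exact harmonic forms; hence the orthogonal projection $\overline{P}_A(\mathcal{O}) : A_{\mathrm{harm}}(\mathcal{O}) \to \overline{A(\mathcal{O})}$ may legitimately be applied to it.

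Next I would observe that the right-hand side is already displayed as the sum of its anti-holomorphic and holomorphic components: $\overline{\alpha} \in \overline{A(\mathcal{O})}$ by the very definition of $V$, while $T(\mathcal{O},\mathcal{O})\overline{\alpha} \in A(\mathcal{O})$ since $T(\mathcal{O},\mathcal{O})$ has codomain $A(\mathcal{O})$. Applying $\overline{P}_A(\mathcal{O})$ to both sides therefore annihilates $T(\mathcal{O},\mathcal{O})\overline{\alpha}$ and fixes $\overline{\alpha}$, yielding
\[ \overline{P}_A(\mathcal{O})\, \mathfrak{O}_e(\riem,\mathcal{O})\, T(\mathcal{O},\riem)\, \overline{\alpha} = \overline{\alpha} \qquad \text{for all } \overline{\alpha} \in V. \]
This is precisely the claim that $\overline{P}_A(\mathcal{O})\,\mathfrak{O}_e(\riem,\mathcal{O})$ is a left inverse of the restriction of $T(\mathcal{O},\riem)$ to $V$. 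Injectivity on $V$ is then immediate: if $T(\mathcal{O},\riem)\overline{\alpha} = 0$ then $\overline{\alpha} = \overline{P}_A(\mathcal{O})\mathfrak{O}_e(\riem,\mathcal{O})(0) = 0$.

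I expect no real obstacle here: the corollary is a formal consequence of Theorem~\ref{th:transmitted_jump_forms} together with the Hodge-type splitting $A_{\mathrm{harm}}(\mathcal{O}) = A(\mathcal{O}) \oplus \overline{A(\mathcal{O})}$. The only point needing a moment's care is bookkeeping of codomains — checking that $\mathfrak{O}_e(\riem,\mathcal{O})T(\mathcal{O},\riem)\overline{\alpha}$ genuinely lands in $A_{\mathrm{harm}}(\mathcal{O})$ so that the projection is defined, and that membership in $V$ is exactly what guarantees $\overline{\alpha}$ has no holomorphic part. All of the genuine analytic content has already been spent in proving the surjectivity statement (Theorem~\ref{TIS}) and the transmission identity (Theorem~\ref{th:transmitted_jump_forms}).
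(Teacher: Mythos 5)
Your proposal is correct and is exactly the paper's argument: the paper's proof consists of the single line ``apply $\overline{P}_A(\mathcal{O})$ to both sides of the second equation of Theorem \ref{th:transmitted_jump_forms},'' and you have simply spelled out the codomain bookkeeping that makes this one-liner work. No differences of substance.
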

\begin{proof}
 Apply $\overline{P}_A(\mathcal{O})$ to both sides of the second equation of Theorem \ref{th:transmitted_jump_forms}.  
\end{proof}

Now observe that Theorem \ref{th:general_T_is_isomorphism} follows directly from Theorem \ref{TIS} and Corollary \ref{TII}.  
\end{subsection}
\end{section}
\begin{section}{{Applications of the isomorphism theorem}}
\begin{subsection}{Plemelj-Sokhtoski jump problem for finitely many quasicircles}  \label{se:jump}
 In this section we establish a jump formula for $n$ quasicircles.  Setting aside analytic issues momentarily, the problem is as follows.  Given a function $U$ on $\Gamma = \Gamma_1 \cup \cdots \cup \Gamma_n$, find holomorphic functions $u_k$ on $\Omega_k$ and $u_\riem$ on $\riem$ such that on each curve $\Gamma_k$ the boundary values $\tilde{u}_k$ and $\tilde{u}_{\riem}$ respectively satisfy
 \[  \tilde{u}_k - \tilde{u}_{\riem} = u.  \]
 The solution to this problem is well-known for more regular curves, say for $\Gamma$ and $u$ smooth.  
 Here, $\Gamma_k$ are of course quasicircles.  We consider the class of functions $\mathcal{H}(\Gamma_k)$; recall that these functions are CNT boundary values of elements of $\mathcal{D}_{\text{harm}}(\Omega_k)$.   
 
It is classically known \cite{Gakhov_book}, \cite{Rodin_book} that the topological condition for existence of a solution to the jump problem for functions $U$ on $\Gamma$ is that 
 \[  \sum_{k=1}^n \int_{\Gamma}  \alpha U =0   \]
 for all one-forms $\alpha \in A(R)$.  On quasicircles, this integral condition would not make sense, because quasicircles need not be rectifiable.  Thus, we replace this by the condition that $U$ is the boundary values of an element of $W$, motivated by (\ref{eq:jump_condition_doubleintegral}).    
 
 Our first theorem in some sense is the derivative of the jump isomorphism.
 Let
 \[  V' = \{ \overline{\alpha} + \beta \in A_{\text{harm}}(\mathcal{O}) : 
  \overline{\alpha} \in V  \ \ \text{and} \ \ \beta \in A(\mathcal{O}) \}.  \]
 \begin{theorem}  \label{th:derivative_of_jump_isomorphism}
  Let $R$ be a compact Riemann surface, and $\Omega_1,\ldots,\Omega_n$ be simply connected regions in $R$, bounded by quasicircles $\Gamma_1,\ldots,\Gamma_n$.  Assume that the closures of $\Omega_1,\ldots,\Omega_n$ are pairwise disjoint.    Then 
  {\begin{align*}  
   \hat{\mathfrak{H}}:V' & \rightarrow A(\mathcal{O}) \oplus A(\riem)_e \\
    \overline{\alpha} + \beta & \mapsto \left( \beta -  T(\mathcal{O},\mathcal{O})  \overline{\alpha} ,
    -T(\mathcal{O},\riem) \overline{\alpha} \right) 
  \end{align*}}
  is an isomorphism.  
 \end{theorem}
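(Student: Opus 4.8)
The plan is to exploit the fact that $\hat{\mathfrak{H}}$ is ``block upper triangular'' with respect to the canonical decomposition of $V'$, and to read off invertibility from Theorem \ref{th:general_T_is_isomorphism}. First I would note that, since the decomposition $A_{\mathrm{harm}}(\mathcal{O}) = A(\mathcal{O}) \oplus \overline{A(\mathcal{O})}$ is orthogonal (in particular unique), every element of $V'$ is uniquely written as $\overline{\alpha} + \beta$ with $\overline{\alpha} \in V$ and $\beta \in A(\mathcal{O})$.  Thus $V' = V \oplus A(\mathcal{O})$ is an orthogonal direct sum; moreover $V$ is closed (it is defined as an orthogonal complement), so $V'$ is a closed subspace of $A_{\mathrm{harm}}(\mathcal{O})$ and hence a Hilbert space, and the map $\overline{\alpha}+\beta \mapsto (\beta,\overline{\alpha})$ is an isometric identification $V' \cong A(\mathcal{O}) \oplus V$.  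Under this identification, and using the target decomposition $A(\mathcal{O}) \oplus A(\riem)_e$, the operator $\hat{\mathfrak{H}}$ has the matrix form $\begin{pmatrix} I & -T(\mathcal{O},\mathcal{O}) \\ 0 & -T(\mathcal{O},\riem) \end{pmatrix}$.

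Next I would check well-definedness and boundedness.  The first component $\beta - T(\mathcal{O},\mathcal{O})\overline{\alpha}$ lies in $A(\mathcal{O})$ since $T(\mathcal{O},\mathcal{O})$ maps $\overline{A(\mathcal{O})}$ into $A(\mathcal{O})$, and it is bounded because $T(\mathcal{O},\mathcal{O})\overline{\alpha}|_{\Omega_j} = T(\mathcal{O},\Omega_j)\overline{\alpha}$ with each $T(\mathcal{O},\Omega_j)$ bounded.  The second component $-T(\mathcal{O},\riem)\overline{\alpha}$ lies in $A(\riem)_e$ by Theorem \ref{TIS} and is bounded by Theorem \ref{th:general_T_is_isomorphism}.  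Hence $\hat{\mathfrak{H}}$ is a bounded operator into $A(\mathcal{O}) \oplus A(\riem)_e$.

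The core step is that the diagonal blocks are invertible: $I : A(\mathcal{O}) \to A(\mathcal{O})$ trivially, and $-T(\mathcal{O},\riem) : V \to A(\riem)_e$ is a bounded isomorphism by Theorem \ref{th:general_T_is_isomorphism} (equivalently, surjective by Theorem \ref{TIS} and injective with bounded left inverse by Corollary \ref{TII}).  I would then invert the triangular matrix explicitly.  For injectivity: if $\hat{\mathfrak{H}}(\overline{\alpha}+\beta)=0$, the second coordinate gives $T(\mathcal{O},\riem)\overline{\alpha}=0$, so $\overline{\alpha}=0$ by Corollary \ref{TII}, and then the first coordinate gives $\beta = T(\mathcal{O},\mathcal{O})\, 0 = 0$.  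For surjectivity: given $(\gamma_1,\gamma_2) \in A(\mathcal{O}) \oplus A(\riem)_e$, set $\overline{\alpha} = -\bigl(T(\mathcal{O},\riem)|_V\bigr)^{-1}\gamma_2 \in V$ and $\beta = \gamma_1 + T(\mathcal{O},\mathcal{O})\overline{\alpha} \in A(\mathcal{O})$; then $\hat{\mathfrak{H}}(\overline{\alpha}+\beta) = (\gamma_1,\gamma_2)$.  The resulting inverse, $(\gamma_1,\gamma_2) \mapsto \bigl(\gamma_1 - T(\mathcal{O},\mathcal{O})(T(\mathcal{O},\riem)|_V)^{-1}\gamma_2\bigr) + \bigl(-(T(\mathcal{O},\riem)|_V)^{-1}\gamma_2\bigr)$, is a composition of bounded operators and hence bounded, so $\hat{\mathfrak{H}}$ is a Banach space isomorphism.

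I do not anticipate a genuine obstacle: the statement is essentially a repackaging of Theorem \ref{th:general_T_is_isomorphism} once the triangular structure is recognized.  The only points demanding a little care are confirming that $V'$ really carries an (orthogonal) direct-sum decomposition, so that the matrix picture is legitimate, and keeping straight which Schiffer operator has which domain and codomain --- in particular that here $T(\mathcal{O},\mathcal{O})$ is the ``self'' operator with codomain $A(\mathcal{O})$, built from the $T(\mathcal{O},\Omega_j)$, and should not be conflated with $T(\mathcal{O},\riem)$.
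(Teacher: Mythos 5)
Your proposal is correct and follows essentially the same route as the paper: the paper likewise deduces injectivity from Theorem \ref{th:general_T_is_isomorphism} applied to the second coordinate (then reads off $\beta=0$ from the first), and proves surjectivity by choosing $\overline{\alpha}$ with $-T(\mathcal{O},\riem)\overline{\alpha}=\beta_\riem$ and setting $\beta=\beta_{\mathcal{O}}+T(\mathcal{O},\mathcal{O})\overline{\alpha}$. Your explicit triangular-matrix formulation and the bounded-inverse formula are just a slightly more detailed packaging of the same argument.
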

 \begin{proof}
  Note that $\hat{\mathfrak{H}}$ is well-defined, since the output is independent of the choice of constant in $h$.  
  First we show that it is injective.  Assume that { $\hat{\mathfrak{H}}( \overline{\alpha} + \beta) = 0$,  then $\overline{\alpha} =0$ by Theorem \ref{th:general_T_is_isomorphism}.  
  But since $0 = \beta - T(\mathcal{O},\mathcal{O}) \overline{\alpha} = \beta,$ we also have that $\beta = 0$.  }
  
  Now we show that $\hat{\mathfrak{H}}$ is surjective.  Let $(\beta_{\mathcal{O}},\beta_{\riem}) \in A(\mathcal{O}) \oplus A(\riem)_e$.  
  By Theorem \ref{th:general_T_is_isomorphism} there is an $\overline{\alpha} \in V$ such that $-T(\mathcal{O},\riem)\overline{\alpha} = \beta_{\riem}$.  Setting {$\beta = \beta_{\mathcal{O}} + T(\mathcal{O},\mathcal{O}) \overline{\alpha}$ yields that $\hat{\mathfrak{H}}( \beta + \overline{\alpha}) = (\beta_\mathcal{O},\beta_{\riem})$.  }
 \end{proof}
 
 \begin{theorem}  \label{th:jump_isomorphism}
  Let $R$, $\mathcal{O}$, and $\riem$ be as in \emph{Theorem \ref{th:derivative_of_jump_isomorphism}}.  Then 
   \begin{align*}
 {\mathfrak{H}}: \mathcal{D}_{\mathrm{harm}}(\mathcal{O}) & \rightarrow \mathcal{D}(\mathcal{O}) \oplus \mathcal{D}(\riem)_q \\
  h &\mapsto \left(\left. J_q(\Gamma) h \right|_{\mathcal{O}},\left. J_q(\Gamma) h \right|_{\riem} \right)   
 \end{align*}
 is a bounded isomorphism from $W$ to $\mathcal{D}(\mathcal{O}) \oplus \mathcal{D}(\riem)_q$.  
 \end{theorem}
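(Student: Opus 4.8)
The plan is to obtain Theorem~\ref{th:jump_isomorphism} as the ``primitive'' of the form-level isomorphism $\hat{\mathfrak{H}}$ of Theorem~\ref{th:derivative_of_jump_isomorphism}, passing between the two levels by differentiation. As a first step I would verify that $\mathfrak{H}$ carries $W$ into $\mathcal{D}(\mathcal{O})\oplus\mathcal{D}(\riem)_q$ and is bounded there: for $h=(h_1,\dots,h_n)\in W$, Lemma~\ref{CDJO} shows that $J_q(\Gamma)h$ is holomorphic on $R\setminus\Gamma$, Theorem~\ref{DJO} gives $\partial[J_q(\Gamma)h]_{\Omega_j}=\partial h_j-T(\mathcal{O},\Omega_j)\overline\partial h$ and $\partial[J_q(\Gamma)h]_\riem=-T(\mathcal{O},\riem)\overline\partial h$, so boundedness of the operators $T(\mathcal{O},\Omega_j)$ and $T(\mathcal{O},\riem)$ yields membership in the respective Dirichlet spaces with norm control, while vanishing at $q$ is built into the definition of each $J_q(\Gamma_k)$; boundedness of $\mathfrak{H}|_W$ is then inherited from that of the $J_q(\Gamma_k)$.

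The essential identity is that $\mathfrak{H}|_W$ covers $\hat{\mathfrak{H}}$. For $h\in W$ write $dh=\beta+\overline\alpha$ with $\beta\in A(\mathcal{O})$ and $\overline\alpha=\overline\partial h\in V$, so $dh\in V'$; since $J_q(\Gamma)h$ is holomorphic, $d[J_q(\Gamma)h]=\partial[J_q(\Gamma)h]$, and Theorem~\ref{DJO} then gives $d\circ\mathfrak{H}|_W=\hat{\mathfrak{H}}\circ d|_W$. I would combine this with the bookkeeping of the locally constant functions. The operator $d$ maps $W$ onto $V'$ with kernel the constant tuples $\cong\mathbb{C}^n$: constant tuples lie in $W$, and any $\beta+\overline\alpha\in V'$ lifts to $W$ by choosing holomorphic primitives of the $\beta_k$ and anti-holomorphic primitives of the $\overline\alpha_k$ on each simply connected $\Omega_k$, the resulting tuple lying in $W$ precisely because $\overline\alpha\in V$. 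Similarly $d$ maps $\mathcal{D}(\mathcal{O})\oplus\mathcal{D}(\riem)_q$ onto $A(\mathcal{O})\oplus A(\riem)_e$ with kernel $\mathbb{C}^n\oplus\{0\}$. Moreover $\mathfrak{H}$ acts on the constant tuples as the inclusion $(c_1,\dots,c_n)\mapsto((c_1,\dots,c_n),0)$: by Remark~\ref{re:ignore_holomorphic} the $\riem$-restriction of $J_q(\Gamma_k)h_k$ vanishes for holomorphic $h_k$, and then Theorem~\ref{th:transmitted_jump} (which holds on all of $W$) forces $J_q(\Gamma_k)h_k$ to equal $h_k$ on $\Omega_k$ and to vanish off $\mathrm{cl}\,\Omega_k$, for every holomorphic $h_k\in\mathcal{D}(\Omega_k)$.

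A short diagram chase then shows $\mathfrak{H}|_W$ is bijective. If $\mathfrak{H}h=0$ then $\hat{\mathfrak{H}}(dh)=d(\mathfrak{H}h)=0$, so $dh=0$ by Theorem~\ref{th:derivative_of_jump_isomorphism}, hence $h$ is a constant tuple and $h=0$; equivalently, one may argue directly that $0=\partial[J_q(\Gamma)h]_\riem=-T(\mathcal{O},\riem)\overline\partial h$ forces $\overline\partial h=0$ by Theorem~\ref{th:general_T_is_isomorphism}, after which Theorem~\ref{DJO} forces each $h_k$ constant. For surjectivity, given $(u_\mathcal{O},u_\riem)$ in the target, Theorem~\ref{th:derivative_of_jump_isomorphism} gives $g\in W$ with $\hat{\mathfrak{H}}(dg)=d(u_\mathcal{O},u_\riem)$; then $d(\mathfrak{H}g-(u_\mathcal{O},u_\riem))=0$, so $\mathfrak{H}g-(u_\mathcal{O},u_\riem)\in\mathbb{C}^n\oplus\{0\}$, and subtracting the corresponding constant tuple from $g$ produces a preimage. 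Boundedness of $(\mathfrak{H}|_W)^{-1}$ follows from the open mapping theorem, or can be made explicit from $\hat{\mathfrak{H}}^{-1}$ together with the bounded primitive maps.

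An alternative, which directly exhibits a bounded inverse, is to check that $(u_\mathcal{O},u_\riem)\mapsto(u_k-\mathfrak{O}(\riem,\Omega_k)u_\riem)_{k=1}^{n}$ inverts $\mathfrak{H}|_W$ on both sides: it lands in $W$ by the computation in the proof of Theorem~\ref{TIS} (which establishes $(\overline\partial\,\mathfrak{O}(\riem,\Omega_k)u_\riem)_k\in V$), and the two composition identities follow from Theorem~\ref{th:transmitted_jump}, Lemma~\ref{NBV} and the reproducing property noted above. I expect the real difficulty to lie not in any single estimate but in the careful bookkeeping of the passage from one-forms to functions --- in particular, pinning down the action of $\mathfrak{H}$ on the finite-dimensional space of locally constant functions, and hence the normalization at $q$, and then reading off boundedness of the inverse.
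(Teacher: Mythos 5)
Your proposal is correct and follows essentially the same route as the paper: both deduce the result from Theorem \ref{th:derivative_of_jump_isomorphism} via the intertwining relation $d\,\mathfrak{H}|_W = \hat{\mathfrak{H}}\,d$ from Theorem \ref{DJO}, and then account separately for the finite-dimensional kernel of $d$ (the locally constant functions), using that $\mathfrak{H}$ sends a constant tuple $c$ to $(c,0)$. Your treatment of the constants and of the normalization at $q$ is in fact somewhat more carefully justified than the paper's, and the explicit inverse $(u_{\mathcal{O}},u_{\riem})\mapsto (u_k-\mathfrak{O}(\riem,\Omega_k)u_{\riem})_k$ you mention is consistent with the paper's later Theorem \ref{th:jump_proper}.
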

 \begin{proof}
  By Lemma \ref{CDJO} the image of $\mathfrak{H}$ is in $\mathcal{D}(\mathcal{O}) \oplus \mathcal{D}(\riem)$.  Since $g(w;z,q)$ vanishes identically at $z=q$ (see \cite{Schiffer_comparison} proof of Theorem 4.26), so does $\partial_w g(w;z,q)$.  Thus the image of $\mathfrak{H}$ is in $\mathcal{D}(\mathcal{O}) \oplus \mathcal{D}(\riem)_q$.  
  
  By Theorem \ref{DJO}, $\partial \mathfrak{H} = \hat{\mathfrak{H}} d$.  Assume that $\mathfrak{H} h =0$.  Then $\hat{\mathfrak{H}} dh =0$, so $dh = 0$, so $h$ is constant.   But if $h$ is a constant $c$ then $\left. J_q(\Gamma) c \right|_{\riem}= c$.  Since $c \in \mathcal{D}(\Gamma)_q$ it vanishes at $q$, so $c=0$.  So $\mathfrak{H}$ is injective.  
  
  We also have that $\mathfrak{H} (h + c) = \mathfrak{H} h + (c,0)$ for any constant $c$.  This together with the fact that $\hat{\mathfrak{H}}$ is surjective shows that $\mathfrak{H}$ is surjective.
 \end{proof}
 
 The proof of Theorem \ref{th:jump_isomorphism} also shows the following.
 \begin{theorem}  \label{th:jump_isomorphism_just_one_side}
 Let $R$, $\mathcal{O}$, and $\riem$ be as in \emph{Theorem \ref{th:derivative_of_jump_isomorphism}}. 
 Then the restriction of $J_q(\Gamma)_{\riem}$ to $W$ is an isomorphism onto $\mathcal{D}(\riem)_q$.  
 \end{theorem}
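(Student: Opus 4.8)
The plan is to read this statement off from the proof of Theorem~\ref{th:jump_isomorphism} by retaining only the $\riem$-component of $\mathfrak{H}$; write $\mathfrak{H}_\riem := J_q(\Gamma)_\riem$. First I would verify that $\mathfrak{H}_\riem$ has range in $\mathcal{D}(\riem)_q$: by Lemma~\ref{CDJO}, $J_q(\Gamma)h$ is holomorphic on $R\backslash\Gamma$ whenever $h\in W$, and since $\partial_w g(w;z,q)$ vanishes identically at $z=q$ (as used in the proof of Theorem~\ref{th:jump_isomorphism}), its value at $q$ is $0$; boundedness is inherited from that of $J_q(\Gamma)$ recalled in Section~\ref{se:integral_operators}.

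The engine is the first identity of Theorem~\ref{DJO}, $\partial\,\mathfrak{H}_\riem = -\,T(\mathcal{O},\riem)\,\overline{\partial}$. On the target side, $\partial$ is a bijection of $\mathcal{D}(\riem)_q$ onto $A(\riem)_e$, because $\riem$ is connected and every exact holomorphic one-form has a unique primitive vanishing at $q$. On the source side, $\overline{\partial}$ carries $W'$ isometrically onto $V$ by Lemma~\ref{VBC}, and $-T(\mathcal{O},\riem)$ is a bounded isomorphism of $V$ onto $A(\riem)_e$ by Theorem~\ref{th:general_T_is_isomorphism}. Chasing these identifications, $\mathfrak{H}_\riem = \partial^{-1}\circ\bigl(-T(\mathcal{O},\riem)\bigr)\circ\overline{\partial}$ is a composition of bounded isomorphisms, hence a bounded isomorphism onto $\mathcal{D}(\riem)_q$. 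Surjectivity can also be argued directly, as in the surjectivity half of Theorem~\ref{th:jump_isomorphism} with the $\mathcal{D}(\mathcal{O})$-slot dropped; indeed Lemma~\ref{NBV}, together with the fact established in the proof of Theorem~\ref{TIS} that $\mathfrak{O}(\riem,\mathcal{O})$ lands in $W$, already exhibits $\mathfrak{O}(\riem,\mathcal{O})$ as a right inverse. For injectivity, $\mathfrak{H}_\riem h = 0$ yields $T(\mathcal{O},\riem)\overline{\partial}h = 0$, hence $\overline{\partial}h = 0$ by Theorem~\ref{th:general_T_is_isomorphism}, and one then finishes as in Theorem~\ref{th:jump_isomorphism}.

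The one delicate point, which I expect to be the main obstacle, is precisely this injectivity step and the attendant bookkeeping of additive constants. The identity $\partial\mathfrak{H}_\riem = -T(\mathcal{O},\riem)\overline{\partial}$ only controls the anti-holomorphic part of $h$: $\overline{\partial}h = 0$ merely says $h$ is holomorphic on $\mathcal{O}$, and on such $h$ the operator $J_q(\Gamma)_\riem$ contributes nothing (cf.\ Remark~\ref{re:ignore_holomorphic}). So injectivity is transparent once one works with the anti-holomorphic data $W'$, where $\overline{\partial}$ is injective modulo constants and the normalization $J_q(\Gamma)h(q)=0$ removes the surviving constant; on all of $W$ the conclusion has to be phrased after passing to the anti-holomorphic part (equivalently, modulo the holomorphic summand), and it is this splitting together with the choice of primitive where the care lies.
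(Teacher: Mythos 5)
Your argument is correct and is essentially the paper's: the stated proof is just the observation that the proof of Theorem~\ref{th:jump_isomorphism} carries over, i.e.\ the identity $\partial J_q(\Gamma)_\riem = -T(\mathcal{O},\riem)\overline{\partial}$ combined with Theorem~\ref{th:general_T_is_isomorphism} and Lemma~\ref{CDJO}, with surjectivity available directly from Lemma~\ref{NBV} and the membership $\mathfrak{O}(\riem,\mathcal{O})h\in W$ established in Theorem~\ref{TIS}. The subtlety you flag is genuine and is resolved exactly as you propose: by Theorem~\ref{th:jump_dependable} the kernel of $J_q(\Gamma)_\riem$ on all of $W$ is the entire holomorphic summand $\mathcal{D}(\mathcal{O})$, so the isomorphism must be read on the anti-holomorphic part $W'$ modulo constants (equivalently, in the Dirichlet seminorm) --- which is precisely how the paper later invokes this theorem in the proof of Theorem~\ref{th:central_density_theorem}.
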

 
 \begin{theorem} \label{th:jump_dependable} Let $R$, $\mathcal{O}$, and $\riem$ be as in \emph{Theorem \ref{th:derivative_of_jump_isomorphism}}.  Let $q \in \riem$.
 If $h \in \mathcal{D}(\mathcal{O})$ then $J_q(\Gamma) h = (h,0)$, and if $h \in \mathcal{D}(\riem)_q$
 then $J_q(\Gamma) \mathfrak{O}(\riem,\mathcal{O}) h = (0,-h)$.  
 \end{theorem}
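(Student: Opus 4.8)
The plan is to establish the two identities separately; in each the mechanism is the derivative identities of Theorem~\ref{DJO}, the transmitted-jump identity of Theorem~\ref{th:transmitted_jump}, and the fact that the output of $J_q(\Gamma)$ always vanishes at $q$.

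\emph{First identity.} Let $h=(h_1,\dots,h_n)\in\mathcal{D}(\mathcal{O})$. Each $h_k$ is holomorphic, so $\overline{\partial}h_k=0$; in particular $h\in W$, hence $J_q(\Gamma)h$ is holomorphic on $R\setminus\Gamma$ by Lemma~\ref{CDJO} and lies in $\mathcal{D}(\mathcal{O})\oplus\mathcal{D}(\riem)_q$ by Theorem~\ref{th:jump_isomorphism}. By the first identity of Theorem~\ref{DJO}, $\partial[J_q(\Gamma)h]_\riem=-[T(\mathcal{O},\riem)(\overline{\partial}h_1,\dots,\overline{\partial}h_n)]=0$; being holomorphic on the connected surface $\riem$ with vanishing derivative and vanishing at $q$, we get $[J_q(\Gamma)h]_\riem\equiv 0$. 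Now apply Theorem~\ref{th:transmitted_jump} (valid on $W$ by the remark following it): $-\mathfrak{O}(\riem,\mathcal{O})[J_q(\Gamma)h]_\riem=h-[J_q(\Gamma)h]_{\mathcal{O}}$ becomes $0=h-[J_q(\Gamma)h]_{\mathcal{O}}$, so $[J_q(\Gamma)h]_{\mathcal{O}}=h$ and $J_q(\Gamma)h=(h,0)$. (Alternatively one computes $[J_q(\Gamma_j)h_j]_{\Omega_j}=h_j$ directly from \eqref{eq:jump_definition} by the residue theorem, the integrand $\partial_wg_R(w;z,q)h_j(w)$ having a single simple pole at $w=z$ inside $\Gamma^{p_j}_s$ with residue $-\tfrac12 h_j(z)$ since $q\notin\Omega_j$, while $[J_q(\Gamma_k)h_k]_{\Omega_j}=0$ for $k\ne j$; this is the one-curve case from \cite{Schiffer_comparison}.)

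\emph{Second identity.} Let $h\in\mathcal{D}(\riem)_q$ and set $g:=\mathfrak{O}(\riem,\mathcal{O})h\in\bigoplus_k\mathcal{D}_{\text{harm}}(\Omega_k)$. By Lemma~\ref{NBV}, together with $h(q)=0$ and the vanishing of $J_q(\Gamma)$'s output at $q$, we obtain $[J_q(\Gamma)g]_\riem=-h$; in particular this component is holomorphic on $\riem$. Consequently $g\in W$: indeed, by the third identity of Theorem~\ref{DJO}, $\overline{S}(\mathcal{O},R)(\overline{\partial}g_1,\dots,\overline{\partial}g_n)$ agrees with $\overline{\partial}[J_q(\Gamma)g]_\riem=0$ on $\riem$, and being an element of $\overline{A(R)}$ it vanishes on all of the connected surface $R$, which as in the proof of Lemma~\ref{CDJO} (using that $\{K_R(z,\cdot):z\in R\}$ spans $\overline{A(R)}$) forces $(\overline{\partial}g_1,\dots,\overline{\partial}g_n)\in V$; one may also see $g\in W$ directly from Stokes' theorem and the matching of the CNT boundary values of $g_k$ with those of the holomorphic function $h$ on $\Gamma_k$, cf.\ \eqref{eq:jump_condition_doubleintegral}. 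Now apply Theorem~\ref{th:transmitted_jump} to $g\in W$: $-\mathfrak{O}(\riem,\mathcal{O})[J_q(\Gamma)g]_\riem=g-[J_q(\Gamma)g]_{\mathcal{O}}$; substituting $[J_q(\Gamma)g]_\riem=-h$ and $g=\mathfrak{O}(\riem,\mathcal{O})h$, the left-hand side equals $\mathfrak{O}(\riem,\mathcal{O})h=g$, so $[J_q(\Gamma)g]_{\mathcal{O}}=0$, i.e.\ $J_q(\Gamma)\mathfrak{O}(\riem,\mathcal{O})h=(0,-h)$.

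The residue computations and the formal manipulations with Theorems~\ref{DJO} and~\ref{th:transmitted_jump} are routine. The one point that genuinely requires care is the exact sign and the absence of an additive constant in the $\riem$-component of $J_q(\Gamma)\mathfrak{O}(\riem,\mathcal{O})h$ in the second identity: this depends on the orientation convention for the level curves $\Gamma^{p'_k}_s$ in $\Omega_k^*$ entering $J_q(\Gamma,\Omega^*)$, and crucially on the normalization $h(q)=0$ (without which one only gets the value $h(q)-h$ on $\riem$). Pinning down this bookkeeping — and the accompanying verification that $\mathfrak{O}(\riem,\mathcal{O})h\in W$ — is the main technical obstacle, after which everything else follows formally.
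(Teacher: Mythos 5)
Your argument for the second identity has a genuine gap, and it sits exactly at the point you yourself flag as ``the main technical obstacle'': the sign of the $\riem$-component. You claim that Lemma \ref{NBV} gives $[J_q(\Gamma)\mathfrak{O}(\riem,\mathcal{O})h]_\riem=-h$, but Lemma \ref{NBV} as stated asserts $h=J_q(\Gamma)_\riem\mathfrak{O}(\riem,\mathcal{O})h$, i.e.\ the \emph{opposite} sign; if you feed $+h$ into Theorem \ref{th:transmitted_jump} you get $[J_q(\Gamma)g]_{\mathcal{O}}=2g$, not $0$. (The paper is in fact internally inconsistent here: with the stated orientation convention for $J_q(\Gamma,\Omega^*)$ and identity (\ref{eq:J_both_sides}) one computes $J_q(\Gamma)_\riem\mathfrak{O}(\riem,\mathcal{O})h=J_q(\Gamma,\riem)h=-h+h(q)=-h$, so the sign in Lemma \ref{NBV} is off --- compare also its use in the proof of Theorem \ref{TIS}, where the minus sign from Theorem \ref{DJO} silently disappears. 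But you cannot both cite the lemma and use the negation of its conclusion.) The paper's own proof resolves the sign by doing the bookkeeping directly: $-\frac{1}{\pi i}\partial_w g(w;z,q)$ is a Cauchy kernel with poles at $w=z$ and $w=q$, both of which lie in $\riem$ when $h\in\mathcal{D}(\riem)_q$; the level curves defining $J_q(\Gamma,\riem)$ are negatively oriented with respect to $\riem$, whence $J_q(\Gamma,\riem)h=-h+h(q)=-h$ on $\riem$ and $0$ on $\mathcal{O}$; and (\ref{eq:J_both_sides}) converts $J_q(\Gamma,\riem)h$ into $J_q(\Gamma)\mathfrak{O}(\riem,\mathcal{O})h$. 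That computation is what you must supply; asserting the sign and pointing to a lemma that says the opposite does not close the argument. Once the sign is fixed, your verification that $\mathfrak{O}(\riem,\mathcal{O})h\in W$ and your use of Theorem \ref{th:transmitted_jump} to kill the $\mathcal{O}$-component are correct, though far heavier than the paper's one-line observation that neither pole lies inside any $\Omega_k$.

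For the first identity, your primary route is circular as written: the extension of Theorem \ref{th:transmitted_jump} from $W'$ to all of $W$ is justified in the paper's remark only by noting that the \emph{left}-hand side vanishes for holomorphic $h$; that the right-hand side $h-[J_q(\Gamma)h]_{\mathcal{O}}$ also vanishes is precisely the statement $[J_q(\Gamma)h]_{\mathcal{O}}=h$ that you are trying to prove. Your parenthetical residue computation (pole at $w=z$ of residue $-\tfrac12 h_j(z)$, no pole from $q$ since $q\in\riem$, and nothing for $k\neq j$ or for $z\in\riem$) is the honest proof, and is essentially the paper's entire argument for both identities. In short: the paper's proof is a direct residue-theorem calculation plus (\ref{eq:J_both_sides}), while your proof reroutes through Theorems \ref{DJO} and \ref{th:transmitted_jump} and Lemma \ref{NBV}; the rerouting buys nothing and, in the second identity, leaves the decisive sign unproved.
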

 \begin{proof}  If $h \in \mathcal{D}(\mathcal{O})$, then since $\partial_w g(w;z,q)$ is holomorphic except for a simple pole of residue one at $w=z$, by the residue theorem $J_q(\Gamma) h = h$.  If $h \in \mathcal{D}(\riem)_q$ then similarly $J_q(\Gamma,\riem) h = - h + h(q) = - h$  (note that $\Gamma$ is negatively oriented with 
  respect to $\riem$).   By (\ref{eq:J_both_sides}) and (\ref{eq:TOsig_sum}),   $J_q(\Gamma)h = J_q(\Gamma,\riem) h$, which completes the proof.  
 \end{proof}

 We now prove Theorem \ref{th:jump_proper}.

 \begin{proof}(of Theorem \ref{th:jump_proper})
 Let $h \in \mathcal{D}_{\text{harm}}(\mathcal{O}) = (h_1,\ldots,h_n)$, and define
 \[ (h_{\mathcal{O}},h_{\riem}) = {\mathfrak{H}}h. \]
 Theorem \ref{th:jump_dependable} yields
  \[ {\mathfrak{H}} (- \mathfrak{O}(\riem,\mathcal{O}) h_{\riem} + h_{\mathcal{O}}) = (h_{\mathcal{O}},h_{\riem}) = {\mathfrak{H}}h. \]
  Thus by Theorem \ref{th:jump_isomorphism}
 \[  h = - \mathfrak{O}(\riem,\mathcal{O}) h_{\riem} + h_{\mathcal{O}},  \] 
 so $H = -H_{\riem} + H_k$ on $\Gamma_k$ for $k=1,\ldots,n$.

  We need only show that the solution is unique.  Given any other solution $(u_{\mathcal{O}},u_{\riem})$ 
  we have that $-\mathfrak{O}(\riem,\mathcal{O})(u_{\riem} - h_{\riem}) + (u_{\mathcal{O}} - h_{\mathcal{O}}) \in \mathcal{D}_{\text{harm}}(\mathcal{O})$ has boundary 
  values zero, so by uniqueness of the extension it is zero.  Thus 
  \[   0= {\mathfrak{H}}\left( -\mathfrak{O}(\riem,\mathcal{O})(u_{\riem} - h_{\riem}) + (u_{\mathcal{O}} - h_{\mathcal{O}}) \right)
    = (u_{\mathcal{O}}- h_{\mathcal{O}},u_{\riem} - h_{\riem}) \]
  which proves the claim.
 \end{proof}
\end{subsection}
\begin{subsection}{The approximation theorems}
In this section, we prove some approximation theorems for Dirichlet and Bergman spaces of nested Riemann surfaces, including Theorem \ref{th:dirichlet_density_squeeze} and Corollary \ref{co:embedding_in_double}.

Since the Dirichlet semi-norm is not a norm, the meaning of density requires a clarification.  Below, whenever we say that a linear subspace $Y$ of a Dirichlet space is dense in a Dirichlet space, the space $Y$ contains all constant functions.   Thus, when we approximate in the Dirichlet semi-norm, we are still free to adjust any ``approximating'' function by a constant without leaving $Y$. 

Let
\[  \overline{P}_{\Omega_k}: \mathcal{D}_{\text{harm}}(\Omega_k) \rightarrow
	 \overline{\mathcal{D}(\Omega_k)}  \]
denote orthogonal projection, where it is understood that for constants $c$
$\overline{P}_{\Omega_k} c = c$.  Let
\[  \oplus_k \overline{P}_{\Omega_k}: \oplus_k \mathcal{D}_{\text{harm}}(\Omega_k) \rightarrow
 	\oplus_k \overline{\mathcal{D}(\Omega_k)}  \]
be the direct sum of these operators.

\begin{corollary} \label{co:GX_r_antiholo_is_dense}
 The projection of $\oplus_k \mathfrak{G}(\Omega_{k,\epsilon}, \Omega_k) X_\epsilon$ onto the anti-holomorphic parts is dense in $W'$.  
\end{corollary}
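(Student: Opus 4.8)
The plan is to read this off from Theorem~\ref{th:average_Xr_dense} by pushing that density statement through the bounded projection $\oplus_k \overline{P}_{\Omega_k}$ and identifying the image of $W$ under this projection with $W'$.

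The first step is to observe that $\oplus_k \overline{P}_{\Omega_k}\colon \oplus_k \mathcal{D}_{\mathrm{harm}}(\Omega_k) \to \oplus_k \overline{\mathcal{D}(\Omega_k)}$ is bounded with respect to the Dirichlet seminorm. On each factor this is immediate from the orthogonal decomposition $A_{\mathrm{harm}}(\Omega_k) = A(\Omega_k) \oplus \overline{A(\Omega_k)}$: the map $dh \mapsto \overline{\partial}h$ is an orthogonal projection, so $\overline{P}_{\Omega_k}$ does not increase the Dirichlet seminorm, and the stipulation that it fixes constants makes it well defined. Since a bounded linear map carries a dense subset of its domain onto a dense subset of its image, applying $\oplus_k \overline{P}_{\Omega_k}$ to the set $\oplus_k \mathfrak{G}(\Omega_{k,\epsilon},\Omega_k) X_\epsilon$, which is dense in $W$ by Theorem~\ref{th:average_Xr_dense}, produces a set that is dense in $\oplus_k \overline{P}_{\Omega_k}(W)$.

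The remaining --- and only nontrivial --- step is to check that $\oplus_k \overline{P}_{\Omega_k}(W) = W'$. Writing $g=(g_1,\dots,g_n)$ and decomposing each $g_k = e_k + \overline{h}_k$ into holomorphic and anti-holomorphic parts (possible since each $\Omega_k$ is simply connected), one has $\overline{\partial}g_k = \overline{\partial}\,\overline{h}_k$, so the defining integral condition of $W$ for $g$ coincides verbatim with the defining condition of $W'$ for $(\overline{h}_1,\dots,\overline{h}_n) = \oplus_k \overline{P}_{\Omega_k}g$. Hence $g \in W$ if and only if $\oplus_k \overline{P}_{\Omega_k}g \in W'$; this gives $\oplus_k \overline{P}_{\Omega_k}(W) \subseteq W'$, and the reverse inclusion follows because every element of $W'$ already lies in $W$ and is fixed by the projection. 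Combining this identification with the density established in the previous step yields the corollary. I expect no genuine obstacle here; the only point requiring care is the treatment of constants, which is governed by the paper's convention that Dirichlet-space density is measured in the seminorm and approximating subspaces contain the constant functions.
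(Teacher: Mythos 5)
Your proof is correct and follows essentially the same route as the paper: push the density of $\oplus_k \mathfrak{G}(\Omega_{k,\epsilon},\Omega_k)X_\epsilon$ in $W$ (Theorem \ref{th:average_Xr_dense}) through the bounded projection $\oplus_k \overline{P}_{\Omega_k}$, which maps $W$ onto $W'$. You simply spell out the details (boundedness, the identification $\oplus_k \overline{P}_{\Omega_k}(W) = W'$ via $\overline{\partial}g_k = \overline{\partial}\,\overline{h}_k$, and the convention on constants) that the paper dismisses as ``easily verified.''
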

\begin{proof}
	It is easily verified that the projection $\oplus_k \overline{P}_{\Omega_k}$  
	takes $W$ into $W'$.  Since it is a bounded surjective operator, the claim follows immediately from Theorem \ref{th:average_Xr_dense}.   
\end{proof}

This leads to the following density theorem.  
\begin{theorem}  \label{th:central_density_theorem}
	The restrictions of functions in $\mathcal{D}(\mathrm{cl} \, \riem \cup \Omega_{1,\epsilon} \cup \cdots \cup \Omega_{n,\epsilon})$ to $\riem$ are dense in $\mathcal{D}(\riem)$.  
\end{theorem}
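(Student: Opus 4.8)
The plan is to obtain Theorem \ref{th:central_density_theorem} by transporting the density statement of Theorem \ref{th:average_Xr_dense} through the jump operator. Write $N_\epsilon = \mathrm{cl}\,\riem \cup \Omega_{1,\epsilon}\cup\cdots\cup\Omega_{n,\epsilon}$ and fix $q \in \riem$. By Theorem \ref{th:jump_isomorphism_just_one_side} the restriction of $J_q(\Gamma)_\riem$ to $W$ is a (bounded) isomorphism onto $\mathcal{D}(\riem)_q$, and by Theorem \ref{th:average_Xr_dense} the set $\oplus_k \mathfrak{G}(\Omega_{k,\epsilon},\Omega_k) X_\epsilon$ is dense in $W$; recall also that $\oplus_k \mathfrak{G}(\Omega_{k,\epsilon},\Omega_k)(u_1,\dots,u_n)$ lies in $W$ precisely when $(u_1,\dots,u_n) \in X_\epsilon$. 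Since a continuous linear surjection carries dense sets onto dense sets, $J_q(\Gamma)_\riem\big( \oplus_k \mathfrak{G}(\Omega_{k,\epsilon},\Omega_k) X_\epsilon \big)$ is dense in $\mathcal{D}(\riem)_q$. Because the Dirichlet seminorm ignores constants and the space of restrictions of $\mathcal{D}(N_\epsilon)$ to $\riem$ contains all constants, it therefore suffices to show that every element of $J_q(\Gamma)_\riem\big( \oplus_k \mathfrak{G}(\Omega_{k,\epsilon},\Omega_k) X_\epsilon \big)$ is the restriction to $\riem$ of an element of $\mathcal{D}(N_\epsilon)$: given $f \in \mathcal{D}(\riem)$, approximating $f - f(q) \in \mathcal{D}(\riem)_q$ in the seminorm by such a restriction approximates $f$ as well.

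So fix $(u_1,\dots,u_n) \in X_\epsilon$ and set $g := J_q(\Gamma)_\riem \oplus_k \mathfrak{G}(\Omega_{k,\epsilon},\Omega_k)(u_1,\dots,u_n) = \sum_{k=1}^n \big[ J_q(\Gamma_k)\,\mathfrak{G}(\Omega_{k,\epsilon},\Omega_k) u_k \big]_\riem$. Corollary \ref{co:holo_extension} already gives that $g$ extends to a holomorphic function on $N_\epsilon$. What remains — and this is the only genuine obstacle — is to verify that this extension has \emph{finite Dirichlet energy over all of $N_\epsilon$}, as opposed to over $\mathrm{cl}\,\riem \cup \Omega_{1,\epsilon'}\cup\cdots\cup\Omega_{n,\epsilon'}$ for $\epsilon' < \epsilon$, where finiteness would follow cheaply from relative compactness inside $N_\epsilon$.

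To settle this I would re-run the opening of the proof of Theorem \ref{th:transmitted_jump}: apply Royden's decomposition \cite[Theorem 4]{Royden} with $\mathcal{E} = \cup_{k=1}^n(\Omega_k \setminus \Omega_{k,\epsilon})$ and with $\cup_{k=1}^n \Omega_k$ in the role of the open set, noting that membership of $(u_1,\dots,u_n)$ in $X_\epsilon$ is exactly the period condition $\sum_k \int_{\Gamma_k'} u_k \alpha = 0$ for all $\alpha \in A(R)$ needed to invoke that theorem. This produces a holomorphic function $H_2 \in \mathcal{D}(R \setminus \mathcal{E}) = \mathcal{D}(N_\epsilon)$ (the ``$-f$'' of \cite{Royden}) whose restriction to $\riem$ equals $\sum_{k=1}^n \big[ J_q(\Gamma_k)_{\Omega_k^*}\,\mathfrak{G}(\Omega_{k,\epsilon},\Omega_k) u_k \big]_\riem$, after rewriting the explicit Cauchy-type integral via \cite[Theorem 4.10]{Schiffer_comparison} exactly as in Theorem \ref{th:extension_collar}; this restriction is $g$. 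Hence the holomorphic extension of $g$ to $N_\epsilon$ is $H_2$, which lies in $\mathcal{D}(N_\epsilon)$ by construction, so $g$ is the restriction to $\riem$ of an element of $\mathcal{D}(N_\epsilon)$, and the theorem follows.

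I expect the main difficulty to be precisely this Dirichlet-finiteness over $N_\epsilon$ itself rather than over slightly smaller collared domains; the point that resolves it is the observation that $g$ is the ``$-f$'' component coming out of Royden's theorem, which that theorem places in the Dirichlet space of $R \setminus \mathcal{E} = N_\epsilon$ automatically. Everything else is a routine chaining of Theorems \ref{th:jump_isomorphism_just_one_side}, \ref{th:average_Xr_dense} and Corollary \ref{co:holo_extension} with the boundedness of $J_q(\Gamma)_\riem$.
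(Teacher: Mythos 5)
Your proposal is correct and follows essentially the same route as the paper's: push the dense set $\oplus_k \mathfrak{G}(\Omega_{k,\epsilon},\Omega_k) X_\epsilon \subset W$ through the bounded isomorphism $J_q(\Gamma)_\riem$ onto $\mathcal{D}(\riem)_q$, and observe that the resulting functions extend holomorphically to $\mathrm{cl}\,\riem \cup \Omega_{1,\epsilon} \cup \cdots \cup \Omega_{n,\epsilon}$, with constants handled separately. The only differences are cosmetic or in your favour: you bypass the paper's detour through $W'$ and the antiholomorphic projection of Corollary \ref{co:GX_r_antiholo_is_dense} (not needed here, since Theorem \ref{th:average_Xr_dense} and Theorem \ref{th:jump_isomorphism_just_one_side} already apply directly on $W$), and you explicitly verify --- by identifying $g$ with the component $H_2 \in \mathcal{D}(R \setminus \mathcal{E})$ of Royden's decomposition, exactly as in the proof of Theorem \ref{th:transmitted_jump} --- that the extension has finite Dirichlet energy on all of $N_\epsilon$ rather than merely on the smaller collared sets, a point the paper passes over silently by citing only the holomorphic extension of Corollary \ref{co:holo_extension}.
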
	
\begin{proof}
 Let $q \in \riem$.  
 By Remark \ref{re:ignore_holomorphic}, the image of $W$ under $J_q(\Gamma)_{\riem}$ is equal to the image of $W'$ under $J_q(\Gamma)_{\riem}$.  By Theorem \ref{th:jump_isomorphism_just_one_side}
 $J_q(\Gamma)$ is a bounded isomorphism from $W'$ to $\mathcal{D}(\riem)_q$.  
 
Thus by Corollary \ref{co:GX_r_antiholo_is_dense},
 \begin{equation} \label{eq:set_temp}
   J_q(\Gamma)_{\riem} \oplus_k \overline{P}_{\Omega_k} (\oplus \mathfrak{G}(\Omega_{k,\epsilon},\Omega_k) X_\epsilon) 
 \end{equation}
 is dense in $\mathcal{D}(\riem)_q$. Now Corollary \ref{co:holo_extension} and Remark \ref{re:ignore_holomorphic}  yield that all of the functions in the set (\ref{eq:set_temp}) have holomorphic extensions to $\text{cl} \, \riem \cup \Omega_{1,\epsilon} \cup \cdots \cup \Omega_{n,\epsilon}$.  Since constant functions automatically have such extensions, this completes the proof.  
\end{proof}
 
\begin{corollary} \label{co:density_general_isotopy}
 Let $R$ be a compact Riemann surface and $\riem \subset R$ be a Riemann surface such that the inclusion map is holomorphic and the boundary of $\riem$ consists of a finite number of pair-wise disjoint quasicircles $\Gamma_1, \ldots, \Gamma_n$ in $R$.  
 
 Assume that there is an open set $\riem' \subset R$ which contains $\riem$, and is bounded by quasicircles $\Gamma_k'$, $k=1,\ldots,n$, which are isotopic in the closure of $\riem' \backslash \riem$ to $\Gamma_k$ for $k=1,\ldots,n$ respectively.  Then the set of restrictions of elements of $\mathcal{D}(\riem')$ to $\riem$ is dense in $\mathcal{D}(\riem)$.
\end{corollary}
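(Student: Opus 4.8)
The plan is to deduce this directly from Theorem \ref{th:central_density_theorem}: I will show that $\riem'$ is contained in one of the Green's-function collar enlargements $\mathrm{cl}\,\riem\cup\Omega_{1,\epsilon}\cup\cdots\cup\Omega_{n,\epsilon}$ of $\riem$, after which the density transfers for free.

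\emph{Step 1 (topology).} Let $\Omega_1,\dots,\Omega_n$ be the complementary components of $\riem$ in $R$, so that $\riem,\Omega_k,\Gamma_k$ form a configuration as in Theorem \ref{th:general_T_is_isomorphism}. I would first unpack the hypothesis that each $\Gamma_k'$ is isotopic to $\Gamma_k$ inside $\mathrm{cl}(\riem'\backslash\riem)$ into the following picture: $\mathrm{cl}\,\riem\subseteq\riem'$, and $\riem'\backslash\mathrm{cl}\,\riem$ is a disjoint union of $n$ doubly-connected domains $C_1,\dots,C_n$, where each $C_k\subseteq\Omega_k$ is a collar of $\Gamma_k$ whose remaining boundary curve is $\Gamma_k'$. (The case $\riem'=\riem$ is trivial, so we may assume $\mathrm{cl}\,\riem\subsetneq\riem'$.) In particular $\riem'\subseteq\mathrm{cl}\,\riem\cup C_1\cup\cdots\cup C_n$, and $\Gamma_k'$ is a Jordan curve in $\Omega_k$ separating $C_k$ from a region $E_k$.

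\emph{Step 2 (fitting the collars).} Fix $p_k\in E_k$ and let $f_k:\disk\to\Omega_k$ be the conformal map with $f_k(0)=p_k$; then $g_{\Omega_k}(f_k(\zeta),p_k)=-\log|\zeta|$, so $\Omega_{k,\epsilon}=f_k(\{e^{-\epsilon}<|\zeta|<1\})$. Since $f_k^{-1}(\Gamma_k')$ is a compact Jordan curve in $\disk$ separating $0$ from $\partial\disk$, the component $f_k^{-1}(E_k)$ of $\disk\backslash f_k^{-1}(\Gamma_k')$ containing $0$ contains some disk $\{|\zeta|\le\delta_k\}$ with $\delta_k\in(0,1)$; hence $f_k^{-1}(C_k)=\disk\backslash\overline{f_k^{-1}(E_k)}\subseteq\{\delta_k<|\zeta|<1\}$, which gives $C_k\subseteq\Omega_{k,\epsilon_k}$ as soon as $e^{-\epsilon_k}\le\delta_k$. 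Set $\epsilon=\max_k\epsilon_k$ and $X=\mathrm{cl}\,\riem\cup\Omega_{1,\epsilon}\cup\cdots\cup\Omega_{n,\epsilon}$; then $X$ is an open subset of $R$ with $\riem\subseteq\riem'\subseteq X$. (The collar $\Omega_{k,\epsilon}$ is simply the image of an honest annulus under $f_k$, so it and Theorem \ref{th:central_density_theorem} make sense for every $\epsilon>0$; alternatively, to stay with thin collars one applies Theorem \ref{th:central_density_theorem} finitely many times to the nested enlargements $\mathrm{cl}\,\riem\cup\bigcup_k f_k(\{e^{-j\epsilon_0}<|\zeta|<1\})$, $j=1,2,\dots$, chaining the resulting density statements through the restriction maps until $\riem'$ is engulfed.)

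\emph{Step 3 (transfer of density).} By Theorem \ref{th:central_density_theorem} the restrictions to $\riem$ of elements of $\mathcal{D}(X)$ are dense in $\mathcal{D}(\riem)$. Every $g\in\mathcal{D}(X)$ restricts to $g|_{\riem'}\in\mathcal{D}(\riem')$ (the Dirichlet seminorm can only decrease on the smaller set) with $(g|_{\riem'})|_\riem=g|_\riem$, so the dense set $\{g|_\riem:g\in\mathcal{D}(X)\}$ is contained in $\{f|_\riem:f\in\mathcal{D}(\riem')\}$, which is therefore dense in $\mathcal{D}(\riem)$ as well (and contains the constants, as our convention on density in the Dirichlet seminorm requires). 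The only substantive points are the topological unpacking of the isotopy hypothesis in Step 1 and the elementary conformal-modulus estimate in Step 2; the final chaining is purely formal.
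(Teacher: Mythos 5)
There is a genuine gap at Step~1. You take $\Omega_1,\dots,\Omega_n$ to be the complementary components of $\riem$ in $R$ and assert that $\riem,\Omega_k,\Gamma_k$ ``form a configuration as in Theorem~\ref{th:general_T_is_isomorphism}.'' But the hypotheses of Corollary~\ref{co:density_general_isotopy} do not say that the complement of $\riem$ in $R$ is a union of $n$ simply-connected domains bounded by the $\Gamma_k$; they only say that $\partial\riem$ consists of $n$ disjoint quasicircles. For example, take $R$ a torus and $\riem$ a thin annulus whose core is non-contractible, with $\riem'$ a slightly fatter annulus: the hypotheses are satisfied, yet $R\backslash\mathrm{cl}\,\riem$ is a single component, an annulus, so the components neither biject with the curves nor are simply connected. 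Your Step~2 then breaks down in an essential way: there is no conformal map $f_k:\disk\to\Omega_k$, the Green's-function collars $\Omega_{k,\epsilon}$ are not defined as you use them, and Theorem~\ref{th:central_density_theorem} (whose standing hypotheses are those of Theorem~\ref{th:general_T_is_isomorphism}) cannot be invoked in $R$.

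The missing idea is to change the ambient surface. Since $\mathcal{D}(\riem)$ and $\mathcal{D}(\riem')$ are intrinsic to $\riem$ and $\riem'$, one may discard $R$ and instead sew disks onto $\riem'$ along the quasicircles $\Gamma_k'$ (via quasisymmetric parametrizations, using the sewing theorem of \cite{RS_monster}) to obtain a compact surface $R'$ in which the complement of $\riem$ genuinely consists of $n$ simply-connected quasidisks $\Omega_k$ (each being your collar $C_k$ capped off by a disk). The isotopy hypothesis is what guarantees each such $\Omega_k$ is a disk bordered by $\Gamma_k$. After this repair, your Steps~2 and~3 — choosing $p_k$ beyond $\Gamma_k'$, the compactness/modulus estimate showing $\riem'\subseteq\mathrm{cl}\,\riem\cup\Omega_{1,\epsilon}\cup\cdots\cup\Omega_{n,\epsilon}$, and the formal transfer of density through restriction — are exactly the intended argument and go through as you wrote them.
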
 
\begin{proof}
 Consider the compact Riemann surface $R'$ obtained from $\riem'$ by sewing disks $\disk$ to the quasicircles $\Gamma_k'$ for $k=1,\ldots,n$ using fixed quasisymmetric parametrizations $\tau_k:\mathbb{S}^1 \rightarrow \Gamma_k'$, $k=1,\ldots,n$, say.  It was shown in \cite{RS_monster} that the topological space obtained from such a sewing  has a unique complex structure compatible with that of $\riem'$ and the sewn disks.  Let $\Omega_1,\ldots, \Omega_n$ be the connected components of the complement of $\riem$ in $R'$ containing $\Gamma_1',\ldots,\Gamma_n'$ respectively. It follows from the hypotheses that each $\Omega_k$ is conformally equivalent to a disk bordered by $\Gamma_k$. 
 	
 For each $k=1,\ldots,n,$ fix a point $p_k \in \Omega_k \backslash \text{cl} \riem'$, and let $f_k:\disk \rightarrow \Omega_k$ be conformal maps such that $f(0)=  p_k$.   We claim that for some $\epsilon >0$, $\riem'$ is contained in $\text{cl} \riem \cup \Omega_{1,\epsilon} \cup \cdots \cup \Omega_{n,\epsilon}$.   To see this, observe that the  set $f_k^{-1}(\Gamma_k')$ is compact and does not contain $0$.  Thus 
 \[    R_k = \inf_{p \in f_k^{-1}(\Gamma_k')}\{|p|\} >0.   \]
 Setting $r =\text{min} \{ R_1,\ldots, R_n \}/2$ and $\epsilon = - \log{r}$ proves the claim.
 
 Applying Theorem \ref{th:central_density_theorem} we obtain that $\mathcal{D}(\text{cl} \riem \cup \Omega_{1,\epsilon} \cup \cdots \cup \Omega_{n,\epsilon})$ is dense in $\mathcal{D}(\riem)$.  Since $\mathcal{D}(\riem')$ contains the restrictions of elements of $\mathcal{D}(\text{cl} \riem \cup \Omega_{1,\epsilon} \cup \cdots \cup \Omega_{n,\epsilon})$ to $\riem'$, this completes the proof.
\end{proof}

We now address the case of one-forms.
\begin{theorem}  \label{th:form_general_isotopy_density}
 Let $R$, $\riem$, and $\riem'$ be as in \emph{Corollary \ref{co:density_general_isotopy}}. 
 Assume that $\riem'$ (and hence $\riem$) is a bordered Riemann surface of genus $g$ and $n$ borders with $n \geq 1$.   
 Then the set of restrictions of elements of $A(\riem')$ to $\riem$ is dense in $A(\riem)$.  
\end{theorem}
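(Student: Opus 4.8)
The plan is to split $A(\riem)$ into its exact part $A(\riem)_e$, which is handled by the Dirichlet density theorem already in hand, and a finite-dimensional complement determined by periods, which is handled by a cohomological argument using the compact double of $\riem'$. For the exact part, observe that $d:\mathcal{D}(\riem)\to A(\riem)_e$ is surjective, kills exactly the constants, is an isometry modulo constants (with $\|dh\|$ equal to the Dirichlet seminorm of $h$), and commutes with restriction to $\riem$. Hence the density of $\{\,h|_\riem : h\in\mathcal{D}(\riem')\,\}$ in $\mathcal{D}(\riem)$ furnished by Corollary \ref{co:density_general_isotopy} gives at once that $A(\riem')_e|_\riem=\{\,(dh)|_\riem : h\in\mathcal{D}(\riem')\,\}$ is dense in $A(\riem)_e$.

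For the period directions, first observe that the isotopy hypothesis forces $\mathrm{cl}(\riem')\setminus\riem$ to be a disjoint union of $n$ annular collars, so that $\riem\hookrightarrow\riem'$ is a homotopy equivalence and the restriction map $H^1(\riem';\mathbb{C})\to H^1(\riem;\mathbb{C})$ is an isomorphism. Second, since $\riem'$ is a bordered Riemann surface of finite type its double is compact, and classical Hodge theory of the double (cf.\ \cite{Ahlfors_Sario}) provides, for every class in $H^1(\riem';\mathbb{R})$, a harmonic field (closed and co-closed one-form) representative of finite Dirichlet norm; then $\eta\mapsto\eta+i\ast\eta$ carries it to an element of $A(\riem')$ with the same real period data, and letting $\eta$ range over all such representatives and multiplying by $i$ shows that the period map $A(\riem')\to H^1(\riem';\mathbb{C})$ is surjective. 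Since the period functionals over a fixed basis of cycles contained in a compact subsurface of $\riem$ are bounded on $A(\riem)$ (by interior $L^2$ estimates), $A(\riem)_e$ is their common kernel and hence closed, and the period map identifies $A(\riem)/A(\riem)_e$ with $H^1(\riem;\mathbb{C})\cong\mathbb{C}^{2g+n-1}$. Chasing the resulting commuting square, the composition $A(\riem')\xrightarrow{\ \mathrm{restr.}\ }A(\riem)\to A(\riem)/A(\riem)_e$ is surjective.

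It remains to assemble these. Given $\omega\in A(\riem)$, choose $\tilde\omega\in A(\riem')$ with $\omega-\tilde\omega|_\riem\in A(\riem)_e$ by the surjectivity just proved, then choose $\sigma_j\in A(\riem')_e$ with $\sigma_j|_\riem\to\omega-\tilde\omega|_\riem$ in $A(\riem)$ using the density of the exact part; since $\tilde\omega+\sigma_j\in A(\riem')$ and $(\tilde\omega+\sigma_j)|_\riem\to\omega$, the theorem follows. I expect the main obstacle to be the surjectivity of the period map for $\riem'$, that is, exhibiting finitely many finite-norm holomorphic one-forms on the bordered surface $\riem'$ that realize all of $H^1(\riem';\mathbb{C})$, together with the (routine but necessary) verification that $A(\riem)_e$ is closed in $A(\riem)$ — it is this closedness that lets the correction along the finitely many period directions close up to an approximation of every element of $A(\riem)$.
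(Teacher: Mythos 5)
Your overall strategy is the same as the paper's: realize the periods of a given $\alpha\in A(\riem)$ by a globally defined holomorphic one-form coming from the compact double, reduce to the exact case, and finish with the Dirichlet density theorem (Corollary \ref{co:density_general_isotopy}); your treatment of the exact part, the closedness of $A(\riem)_e$, and the final assembly are all fine. The gap is exactly at the point you flag as the main obstacle: your argument for surjectivity of the period map $P\colon A(\riem')\to H^1(\riem';\mathbb{C})$ does not work. What the harmonic-field construction $\eta\mapsto\eta+i\ast\eta$ gives you is that $\mathrm{Re}\circ P$ is surjective onto $H^1(\riem';\mathbb{R})$; since $A(\riem')$ is already a complex vector space, ``multiplying by $i$'' adds nothing, and the inference ``$P(A(\riem'))$ is a complex subspace with full real part, hence is everything'' is false. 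Indeed the identical argument applies verbatim on a \emph{compact} surface $R'$ of genus $G$ (every real cohomology class is the real part of the period vector of a holomorphic differential), yet there $P(A(R'))$ has complex dimension $G$ inside $H^1(R';\mathbb{C})\cong\mathbb{C}^{2G}$ and is very far from surjective. A complex subspace $V\subset W\otimes_{\mathbb{R}}\mathbb{C}$ with $\mathrm{Re}(V)=W$ need not be all of $W\otimes\mathbb{C}$ (e.g.\ $V=\mathbb{C}\cdot(1,i)\subset\mathbb{C}^2$), so knowing the real periods are unconstrained says nothing about the imaginary periods of the \emph{same} form.

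What actually makes the bordered case work is a genuinely nontrivial classical fact which your argument never touches: if $R'$ is the compact double of $\riem'$ (genus $2g+n-1$), then a holomorphic differential in $A(R')$ whose periods vanish over all $2g+n-1$ generating cycles of $\riem'$ is identically zero; equivalently, the $(2g+n-1)\times(2g+n-1)$ period matrix of a basis of $A(R')$ over those cycles is non-singular (see Schiffer--Spencer \cite{Schiffer_Spencer} or Ahlfors--Sario \cite{Ahlfors_Sario}). By the dimension count this gives surjectivity of the period map already on the finite-dimensional subspace $A(R')|_{\riem'}\subset A(\riem')$, which is how the paper proceeds: it picks $\beta\in A(R')$ with the same periods as $\alpha$, so that $\alpha-\beta|_{\riem}$ is exact, and then approximates a primitive. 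If you replace your harmonic-field paragraph with an appeal to this non-degeneracy statement (or prove it, e.g.\ via the Riemann bilinear relations applied to the symmetry of the double), the rest of your proof goes through.
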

\begin{proof}
 Let $R'$ be the double of $\riem'$.  It is a surface of genus $2g+n-1$, so the dimension of $A(R')$ is $2g+n-1$.  Let $a_1,\ldots,a_{2g + n-1}$ denote a set of generators for the fundamental group of $\riem$.   Given any $\alpha \in A(\riem)$, there is a $\beta \in A(R')$ with the same periods.  Thus $\alpha - \beta$ is exact on $\riem$, with primitive $H$ say.    
 
 Thus by Corollary \ref{co:density_general_isotopy}, for any $\epsilon >0$ there is an
 $h \in \mathcal{D}(\riem')$ such that 
 \[   \left\| H- \left. h \right|_{\riem} \right\|_{\mathcal{D}(\riem)}< \epsilon.  \] 
 Setting $\delta = \partial h + \left.\beta \right|_{\riem'} \in A(\riem')$
 we have that 
 \[  \left\| \alpha - \left. \delta\right|_{\riem} \right\|_{A(\riem)} = \left\| H - \left. h\right|_{\riem} \right\|_{\mathcal{D}(\riem)} < \epsilon.  \]
 This completes the proof.
\end{proof}

The following example shows that the truth of Corollary \ref{co:density_general_isotopy} depends on the fact that every component of $R \backslash \riem$ contains a component of $R \backslash \riem'$.   
Let $R = \sphere$.  Fix $r \in (0,1)$ and let $\riem = \{  z : r < |z| < 1 \}$.  For $0<r'<r$ and $s'>1$, Theorem \ref{th:central_density_theorem} says that for $\riem' = \{ z: r'< |z| <s' \}$, 
$\mathcal{D}(\riem')$ is dense in $\mathcal{D}(\riem)$.  However, setting instead $\riem' = \disk$, it is not true that $\mathcal{D}(\disk)$ is dense in $\mathcal{D}(\riem)$.  To see this, fix $z \in \mathbb{C}$ such that $|z| >1$
and observe that the functional on $\mathcal{D}(\riem)$ given by 
\[     \Lambda(h) = \left. J_q(\Gamma)h  \right|_z       \]
is bounded, since point evaluation is bounded on the Dirichlet space of $\{ z : |z| >1 \} \cup \{ \infty \}$.  This functional vanishes on $\mathcal{D}(\disk)$ but not on the entire space $\mathcal{D}(\riem)$.  Thus $\mathcal{D}(\disk)$ is not dense in $\mathcal{D}(\riem)$.   

Also, even removing a point from a component is not enough.    Let $\riem' = \mathbb{D} \backslash \{0 \}$ and let $\riem$ be as above. Observe that any element of $\mathcal{D}(\disk \backslash \{0\})$ extends to an element of $\mathcal{D}(\disk)$.  Since $\mathcal{D}(\disk)$ is not dense in $\mathcal{D}(\riem)$ by the previous paragraph,
the Theorem \ref{th:central_density_theorem} does not extend to this case.  On the other hand, the set of restrictions of holomorphic functions on $\disk \backslash \{0 \}$ to $\riem$ is dense in $\mathcal{D}(\riem)$.   

We now prove Theorem \ref{th:dirichlet_density_squeeze} and Corollary \ref{co:embedding_in_double}.  

\begin{proof}(of Theorem \ref{th:dirichlet_density_squeeze})
 The set of restrictions of elements of $\mathcal{D}(\riem'')$ to $\riem$ contains the set of restrictions of elements of $\mathcal{D}(\riem')$ to $\riem$.  Thus by Corollary \ref{co:density_general_isotopy}, the set of restrictions of elements of $\mathcal{D}(\riem'')$ to $\riem$ are dense.  
 
 Similarly, by Theorem \ref{th:form_general_isotopy_density}, the set of restrictions of elements of $A(\riem'')$ to $\riem$ is dense in $A(\riem)$. 
\end{proof}
\begin{proof}(of Corollary \ref{co:embedding_in_double}).
 Observe that $\riem'$ can be viewed as a subset of its double $\riem^D$, and its boundary can be identified with $n$ analytic curves in the double.  Thus the claim follows from Theorem \ref{th:dirichlet_density_squeeze} applied with $R = \riem^D$. 
\end{proof}

 We indicate another possible approach to proving Theorem \ref{th:form_general_isotopy_density} (and therefore Theorem \ref{th:dirichlet_density_squeeze}), using the result of Askaripour and Barron \cite{AskBar}.  We assume that $\Gamma_k$ and $\Gamma_k'$ are analytic curves for $k=1,\ldots,n$.   Assume also that the universal cover of $R$ is the disk.  Let $\pi:\disk \rightarrow R$ be the covering map.  Choose a collection of curves $\gamma_j$, $j=1,\ldots, g$ dissecting the compact surface $R$, where $g$ is the genus, to obtain a fundamental polygon $F$ in the disk $\disk$.  Choose the dissection such that every curve $\Gamma_k$ and $\Gamma_k'$ is crossed by at least one of the dissecting curves.  In that case, the sets $\pi^{-1}(\riem) \cap F$ and $\pi^{-1}(\riem') \cap F$ will be Carath\'eodory sets in the plane, and one can apply \cite[Proposition 2.1]{AskBar} to obtain the result in the case of analytic curves.
 
 One would need to show that such a dissection exists in general, which should not pose much difficulty.  However, if one attempts this argument in the case of  quasicircles, then establishing that the dissecting curves can be made to have the intersection property might be a delicate problem.  On the other hand, if the dissecting curves are chosen not to intersect $\Gamma_k$ and $\Gamma_k'$, the lifted sets $\pi^{-1}(\riem) \cap F$ and $\pi^{-1}(\riem') \cap F$ would not be Carath\'eodory sets, and one could not apply their result directly.  
 
 It should be noted that \cite[Proposition 2.1]{AskBar} does not require analytic conditions on the boundary of $\riem$ and $\riem'$, as we do in Theorem \ref{th:form_general_isotopy_density}.  Although we were able to remove the restrictions on the outer domain $\riem''$ to some extent in Theorem \ref{th:dirichlet_density_squeeze}, we did not do so for $\riem$ itself.  Thus their result suggests that the analytic conditions of Theorem \ref{th:dirichlet_density_squeeze} can be weakened.
\end{subsection}	
\begin{subsection}{The Schiffer comparison operator for open surfaces}
 Next we define a certain comparison operator, which generalizes an operator considered by Schiffer \cite{Courant_Schiffer}.  
 
 Let $R$ be a compact Riemann surface.  Let $\riem$ and $\riem'$ be Riemann surfaces such that $\riem \subseteq \riem' \subseteq R$, and such that the inclusion maps from $\riem$ into $\riem'$ and $\riem'$ into $R$ are holomorphic.  
 Define the restriction operator 
 \begin{align*}
  R(\riem',\riem): A(\riem') & \rightarrow A(\riem) \\
   \alpha & \mapsto \left. \alpha \right|_{\riem}.  
 \end{align*}
 
 Assume that $\riem'$ has a Green's function $g_{\riem'}$.  We then define the Bergman kernel of $\riem'$ to be
 \[  K_{\riem'} = - \frac{1}{\pi i} \partial_z \overline{\partial}_{w} g_{\riem'}(w,z).  \]
 The Schiffer comparison operator is then defined to be 
 
 \begin{align}
  S(\riem,\riem') : A(\riem) & \rightarrow A(\riem') \nonumber \\
  \alpha & \mapsto \iint_{\riem} K_{\riem'}(z,w) \wedge_w \alpha(w).   
 \end{align}
 
 We then have the following result, which strangely seems to have been missed by Schiffer, even in the planar case.  By a hyperbolic metric, we mean a complete, constant negative curvature metric.   
 \begin{theorem}  \label{th:restriction_adjoint}
  Let $R$ be a compact Riemann surface, $\riem$ and $\riem'$ be Riemann surfaces such that $\riem \subset \riem' \subset R$, {{$\mathrm{cl}\riem\subset \riem'$ and}} the inclusion maps from $\riem$ to $\riem'$ and $\riem'$ to $R$ are holomorphic.  Assume that $\riem'$ has a Green's function $g_{\riem'}(w,z)$, and that $\riem'$ possesses a hyperbolic metric.    Denoting the adjoint of $R(\riem',\riem)$ by $R(\riem',\riem)^*$ we have
  \[ S(\riem,\riem') = R(\riem',\riem)^*. \] 
 \end{theorem}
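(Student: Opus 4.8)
The plan is to verify the adjoint identity directly from the definitions by computing the pairing $(S(\riem,\riem')\alpha, \beta)$ for $\alpha \in A(\riem)$ and $\beta \in A(\riem')$ and showing it equals $(\alpha, R(\riem',\riem)\beta) = (\alpha, \beta|_\riem)$. The key input is the reproducing property of the Bergman kernel $K_{\riem'}$: for every $\beta \in A(\riem')$ and every $z \in \riem'$, one has $\beta(z) = (\beta, K_{\riem'}(z,\cdot))$, i.e. $\beta(z) = \frac{1}{2}\iint_{\riem'} \beta(w) \wedge \ast \overline{K_{\riem'}(z,w)}$, where $K_{\riem'}(z,\cdot) \in \overline{A(\riem')}$ as a form in the second variable. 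This is where the hyperbolic metric hypothesis enters: it guarantees $\riem'$ is a hyperbolic Riemann surface, hence has a Green's function and a genuine Bergman kernel with the reproducing property (as opposed to a merely parabolic surface where $A(\riem')$ could be trivial or the kernel degenerate).

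First I would unwind $S(\riem,\riem')\alpha$: by definition it is the holomorphic one-form $z \mapsto \iint_{\riem} K_{\riem'}(z,w) \wedge_w \alpha(w)$, which, using the relation between wedge-with-$\ast$-conjugate and the bilinear wedge pairing of a holomorphic and an anti-holomorphic form, can be rewritten (up to the standard sign/constant bookkeeping) so that $S(\riem,\riem')\alpha(z) = \frac{1}{2}\iint_{\riem} (\ast\overline{\alpha})(w) \wedge \overline{K_{\riem'}(w,z)}$ — in other words $S(\riem,\riem')\alpha$ is the $L^2$-orthogonal projection onto $A(\riem')$ of the extension-by-zero of $\alpha$ to $\riem'$. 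Granting that reinterpretation, the computation is immediate: for $\beta \in A(\riem')$,
\[
  (S(\riem,\riem')\alpha, \beta)_{\riem'} = \left( P_{A(\riem')}\big(\alpha \cdot \chi_\riem\big), \beta\right)_{\riem'} = \left(\alpha\cdot\chi_\riem, \beta\right)_{\riem'} = \left(\alpha, \beta|_\riem\right)_{\riem},
\]
the middle equality because $\beta \in A(\riem')$ is fixed by the projection, and the last because $\chi_\riem$ restricts the integration to $\riem$. This says precisely that $S(\riem,\riem') = R(\riem',\riem)^*$.

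The step I expect to be the main obstacle is the careful justification of the "projection" interpretation of $S(\riem,\riem')$ — that is, rigorously converting the kernel formula $\iint_\riem K_{\riem'}(z,w)\wedge_w \alpha(w)$ into the statement that it reproduces the inner product $(\alpha, K_{\riem'}(z,\cdot))$ against the zero-extension of $\alpha$. This requires: (i) identifying $\ast \overline{K_{\riem'}(z,w)}$ correctly as a form in $w$ and tracking the factor $-1/(\pi i)$ and the orientation convention in \eqref{eq:form_inner_product}; (ii) confirming that $\alpha \cdot \chi_\riem$, although only in $L^2$ on $\riem'$ and not holomorphic across $\partial\riem$, is still a legitimate element of the Hilbert space of square-integrable one-forms on $\riem'$, so that its orthogonal projection onto $A(\riem')$ makes sense — here the hypothesis $\mathrm{cl}\,\riem \subset \riem'$ ensures $\alpha$ has finite norm on a neighbourhood and in particular on $\riem$; and (iii) invoking the symmetry $\overline{K_{\riem'}(z,w)}$ versus $K_{\riem'}(w,z)$ of the Bergman kernel, which follows from the corresponding symmetry $\overline{\partial_z\overline{\partial}_w g_{\riem'}(w,z)} = \partial_w\overline{\partial}_z g_{\riem'}(z,w)$ of Green's function together with its symmetry in the two variables. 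Once these bookkeeping points are settled, the adjoint identity drops out with no further analysis; no approximation by analytic curves or appeal to the earlier isomorphism theorems is needed, since the statement is purely a Hilbert-space duality between restriction and kernel-convolution.
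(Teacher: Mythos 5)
Your strategy is sound and, once completed, is arguably cleaner than the paper's. The paper proves the identity by writing $(S(\riem,\riem')\alpha,\beta)$ as an iterated integral, applying the reproducing property, and then spending most of the proof justifying the interchange of the two integrations: it exhausts $\riem$ by compact sets $K_n$ and estimates $\|K_{\riem'}(\cdot,w)\|_{1,K_n}$, $\|\beta\|_{\infty,K_n}$ and $\|\alpha\|_{1,\riem}$ in the hyperbolic metric before passing to the limit in $L^2$. Your reorganization --- read $S(\riem,\riem')\alpha$ as the Bergman projection of the zero-extension $\alpha\chi_{\riem}$, then use self-adjointness of an orthogonal projection --- makes that analytic work unnecessary, because self-adjointness of the \emph{abstract} orthogonal projection onto the closed subspace $A(\riem')\subset L^2_{(1,0)}(\riem')$ is free.

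The caveat is that you have mislocated the remaining work. Your items (i)--(iii) (signs, $\alpha\chi_{\riem}\in L^2$, Hermitian symmetry of $K_{\riem'}$) only show that $S(\riem,\riem')\alpha(z)$ equals the inner product of $\alpha\chi_{\riem}$ against $\overline{K_{\riem'}(z,\cdot)}$; they do not by themselves show that this equals $P_{A(\riem')}(\alpha\chi_{\riem})(z)$ for the abstract projection, and it is only for the abstract projection that the middle equality $(P(\alpha\chi_{\riem}),\beta)=(\alpha\chi_{\riem},\beta)$ is automatic. The missing (short, standard) step is that the kernel integral annihilates $A(\riem')^{\perp}$: decompose $\alpha\chi_{\riem}=h+g$ with $h=P_{A(\riem')}(\alpha\chi_{\riem})$ and $g\perp A(\riem')$, apply the reproducing property to $h$, and use that $\overline{K_{\riem'}(z,\cdot)}\in A(\riem')$ for each fixed $z$ to conclude the $g$-term vanishes. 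This must be said explicitly, because the alternative justification --- verifying $(P(\alpha\chi_{\riem}),\beta)=(\alpha\chi_{\riem},\beta)$ by writing out the double integral and swapping --- is exactly the Fubini problem the paper labors over, and it is genuinely delicate: $K_{\riem'}(z,w)$ blows up like $|z-w|^{-2}$ on the diagonal, which lies inside $\riem'\times\riem$, so the integrand is not absolutely integrable and naive Tonelli fails. With the annihilation step supplied, your proof is complete and avoids the exhaustion argument entirely; without it, ``no further analysis is needed'' is an overstatement.
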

\begin{proof}   Let $\alpha \in A(\riem)$ and $\beta \in A(\riem')$.  Then using 
	the reproducing property of the Bergman kernel $K_{\riem'}$ and assuming that we are allowed to interchange the order of integration we have
 \begin{align*}
  \left( S(\riem,\riem') \alpha, \beta \right)_{A(\riem')} & = \frac{i}{2} \iint_{\riem',z} \iint_{\riem,w} 
   K_{\riem'} (z,w) \wedge_w \alpha(w) \wedge_z \overline{\beta(z)} \\
   & = - \frac{i}{2} \iint_{\riem,w} \iint_{\riem',z} \overline{K_{\riem'} (w,z)} \wedge_z \overline{\beta(z)} \wedge_w \alpha(w) \\
   & =- \frac{i}{2} \iint_{\riem} \overline{\beta(w)}\wedge_w  \alpha(w) \\
   & = \left( \alpha, R(\riem',\riem) \beta  \right)_{A(\riem)}.  
 \end{align*}

To justify the change of the order of integration, let  $\riem$ be compactly included in $\riem'$ and $K_n$ be a sequence of compact subsets of $\riem$ that exhaust it (i.e. $K_n\to {\riem}$). { Denote the $L^p$ norm over a set $U$ with respect to the hyperbolic metric on $\riem'$ by $\| \cdot \|_{p,U}$ (see \cite{RSS_L2Beltrami}).  Note that the $L^2$ norm of a one-form (a one-differential in the terminology of \cite{RSS_L2Beltrami}) with respect to the hyperbolic metric agrees with the $L^2$ norm used in this paper.  Now for fixed $w$ set 
\[  c_n(w) = \| K_{\riem'}(\cdot,w) \|_{1,K_n}.   \]
Note that $c_n(w)$ is a one-form on $\riem$ for every $n$ (of the form $a(w) d\bar{w}$ in local coordinates).  
Then 
\begin{equation*}
 \iint_{\riem,w} \iint_{K_n,z} |K_{\riem'}(z,w) \wedge_w \alpha(w) \wedge_z \overline{\beta(z)} | 
  \leq \| \beta \|_{\infty,K_n} \iint_{\riem,w} | c_n(w) \wedge_w \alpha(w)|
\end{equation*}
so setting $C_n = \| c_n(\cdot) \|_{\infty,\riem}$ we have 
\begin{equation*}
    \iint_{\riem,w} \iint_{K_n,z} |K_{\riem'}(z,w) \wedge_w \alpha(w) \wedge_z \overline{\beta(z)} |  
    \leq C_n \| \alpha \|_{1,\riem} \| \beta \|_{\infty,K_n}.
\end{equation*}  
Now for the compact set $K_n$, a standard argument shows that there are constants $D_n$ such that  
$\| \beta \|_{\infty,K_n} \leq D_n \| \beta \|_{2,\riem}$ 
(see e.g. \cite[Lemma 2.1]{Schiffer_comparison}, which can be made global as in \cite[Lemma 4.7]{RSS_L2Beltrami}).  Now the norm of the characteristic function on $\riem'$ is finite so there is a constant $E$ such that 
$\| \alpha \|_{1,\riem} \leq E \| \alpha \|_{2,\riem} \leq E \| \alpha \|_{2,\riem'}$.   Thus
\begin{equation*}
 \iint_{\riem,w} \iint_{K_n,z} |K_{\riem'}(z,w) \wedge_w \alpha(w) \wedge_z \overline{\beta(z)} |  \leq C_n D_n E \| \alpha \|_{2,\riem'} \| \beta \|_{2,\riem}.
\end{equation*}

}

So Fubini-Tonelli's theorem applies and we have
 \begin{equation}\label{fubini}
 \iint_{K_n,z} \iint_{\riem,w} 
   K_{\riem'} (z,w) \wedge_w \alpha(w) \wedge_z \overline{\beta(z)} \\
    = -\iint_{\riem,w} \iint_{K_n,z} \overline{K_{\riem'} (w,z)} \wedge_z \overline{\beta(z)} \wedge \alpha(w)
    \end{equation}

Now since $\beta \chi_{K_n} \to \beta \chi_{\riem}$ in $L^2$ norm and the operator with kernel $K_\riem$ (or $\overline{K_{\riem}}$) was bounded on $L^2(\riem)$ then 
$$\iint_{K_n,z}
   \overline{K_{\riem'} (w,z)}  \wedge_z \overline{\beta(z)} \to \iint_{\Sigma} 
   \overline{K_{\riem'} (w,z)}\wedge_z \overline{\beta(z)},$$
in the $L^2(\riem)-$norm. This, and \eqref{fubini} together with the fact that $\alpha \in L^2(\riem)$ (the dual of $L^2(\riem)$) yields that 

$$\iint_{\riem',z} \iint_{\riem,w} K_{\riem'} (z,w) \wedge_w \alpha(w) \wedge_z \overline{\beta(z)}
    = - \iint_{\riem,w} \iint_{\riem',z} \overline{K_{\riem'} (w,z)} \wedge_z \overline{\beta(z)} \wedge_w \alpha(w).$$ 
\end{proof}

We now can prove the final result.
\begin{proof}(of Theorem \ref{th:Bergman_comparison_dense}).  
 {Under the hypotheses, $\riem'$ possesses a hyperbolic metric.}
 It is obvious that $\text{Ker}(R(\riem',\riem))$ is zero.  Thus, 
 \[ \text{cl} \, \text{Im} (S(\riem,\riem')) = \text{Ker} (R(\riem',\riem) )^\perp = A(\riem').  \]
 
 For the kernel, observe that
 by Corollary \ref{co:density_general_isotopy}, the image of $R(\riem',\riem)$ is dense in $A(\riem)$.  Thus 
 \[  \text{Ker} (S(\riem,\riem') )= (\text{cl} \, \text{Im} (R(\riem',\riem)))^\perp  
  = \{ 0 \}.    \]
  This completes the proof.
\end{proof}
\begin{remark}
Note that operator $S(\Sigma, \Sigma')$ can not have closed range, because that would imply that $R(\Sigma', \Sigma)$ has closed range.  Since $R(\Sigma', \Sigma)$ is also injective this would imply that it is surjective, which is clearly not the case. 
\end{remark}
\end{subsection}
\end{section}

\end{document}